\numberwithin{equation}{section}
\theoremstyle{plain}
\newtheorem{thm}{Theorem}[section]
\newtheorem{corollary}[thm]{Corollary}
\newtheorem{proposition}[thm]{Proposition}
\newtheorem{lemma}[thm]{Lemma}
\newtheorem{definition}[thm]{Definition}
\newtheorem{remark}[thm]{Remark}
\theoremstyle{remark}
\newtheorem{example}[thm]{Example}
\newcommand\pref[1]{(\ref{#1})}
\newcommand*{\R}{\mathbb{R}}
\newcommand*{\N}{\mathbb{N}}
\newcommand*{\Z}{\mathbb{Z}}
\renewcommand{\P}{\mathcal{P}}
\newcommand*{\X}{\mathcal{X}}
\DeclareMathOperator{\E}{\mathbb{E}}
\DeclareMathOperator{\Var}{Var}
\DeclareMathOperator{\ind}{\mathbbm{1}}
\DeclareMathOperator*{\argmin}{arg\,min}
\DeclareMathOperator*{\argmax}{arg\,max}
\DeclarePairedDelimiter{\norm}{\lVert}{\rVert}
\DeclarePairedDelimiter{\abs}{\lvert}{\rvert}
\DeclareMathOperator{\bary}{bar}
\DeclareMathOperator{\Bary}{Bar}
\DeclareMathOperator{\Ent}{Ent}
\newcommand\iom{\int_{\Omega}}
\DeclareMathOperator{\supp}{supp}
\DeclareMathOperator{\diam}{diam}
\DeclareMathOperator{\BV}{BV}
\DeclareMathOperator{\id}{id}
\DeclareMathOperator*{\osc}{osc}
\newcommand\loc{{\mathrm{loc}}}
\newcommand\lip{{\mathrm{Lip}}}
\newcommand*{\clos}{\overline}
\newcommand*{\Omb}{\clos{\Omega}}
\newcommand*{\rhob}{\clos{\rho}}
\let \eps\varepsilon
\let \div \relax
\DeclareMathOperator{\div}{div}
\DeclareMathOperator{\tr}{tr}
\newcommand*{\cM}{\mathcal{M}}
\newcommand*{\cof}{\mathrm{cof}}
\newcommand*{\diff}{\mathrm{d}}
\renewcommand{\d}{\,\mathrm{d}}
\newcommand*{\eqset}{\coloneqq} % := with the correct vertical alignment
\title{Entropic-Wasserstein barycenters: PDE characterization, regularity and CLT}
\author{Guillaume Carlier\thanks{CEREMADE, Universit\'e Paris Dauphine, PSL,
 and INRIA-Paris, MOKAPLAN,
\texttt{carlier@ceremade.dauphine.fr}}
\and
Katharina Eichinger \thanks{CEREMADE, Universit\'e Paris Dauphine, PSL, and INRIA-Paris, MOKAPLAN,
\texttt{eichinger@ceremade.dauphine.fr}}
\and
Alexey Kroshnin \thanks{Universit\'e Claude Bernard, Lyon 1, IITP RAS, and HSE university 
\texttt{kroshnin@math.univ-lyon1.fr}}
}
\begin{document}

\maketitle

\begin{abstract}
In this paper, we investigate properties of entropy-penalized Wasserstein barycenters introduced in \cite{bigotcazpap} as a regularization of Wasserstein barycenters \cite{agueh-carlier}. After characterizing these barycenters in terms of a system of Monge--Amp\`ere equations, we prove some  global moment and Sobolev bounds as well as higher regularity properties. We finally establish a central limit theorem for entropic-Wasserstein barycenters.

\end{abstract}

\textbf{Keywords:} entropic-Wasserstein barycenters, Monge-Amp\`ere equa-\\
tions and systems, central limit theorem.

\medskip

\textbf{MS Classification: 49Q25, 35J96, 60B12}.

\section{Introduction}\label{sec-intro}

Wasserstein barycenters are minimizers of a weighted sum of squared quadratic Wasserstein distances to some fixed family of probability measures. As such, they are a particular instance of Fr\'echet means. In recent years, Wasserstein barycenters have become a popular tool to interpolate between probability measures and found various applications in statistics, image processing and machine learning (see \cite{peyretexture}, \cite{Sriva}, \cite{PZ}, \cite{ZemelProcrustes}). There are also  by now various numerical solvers to compute or approximate them (see \cite{Cuturi}, \cite{COO}, \cite{benetal}, \cite{cupe}). Introduced in \cite{agueh-carlier} in the case of finitely many measures over an euclidean space, the Wasserstein barycenter problem has been extended to the Riemannian setting by Kim and Pass \cite{KimPass} and to the case of infinitely many (possibly random) measures by Pass \cite{Pass}, Bigot and Klein \cite{BK} and Le Gouic and Loubes \cite{gouic2015existence} who established a law of large numbers for empirical  Wasserstein barycenters of i.i.d.\ random measures. Having this law of large number in mind, it is natural to look for error estimates and asymptotic normality of the error between population Wasserstein barycenters and their empirical counterpart. However, establishing  a central limit theorem (CLT) for Wasserstein barycenters and, more generally, for Fr\'echet means over a nonnegatively curved metric space seems to be a delicate task (see \cite{ahidar2019convergence}) except in very particular cases  (dimension one or the case of Gaussians, see \cite{acclt}, \cite{kroshnin2019statistical}). The difficulty is not only due to the fact that the problem is infinite-dimensional but also (and in fact more importantly) to the fact that Wasserstein barycenters are related to an obstacle problem for a system of Monge--Amp\`ere equations (see \cite{agueh-carlier}). The support of the Wasserstein barycenter is indeed an unknown of the problem and very little is known about its regularity (see \cite{SanWang} for counter-examples to convexity). The free-boundary aspect of  Wasserstein barycenters actually makes the dependence of the barycenter possibly nonsmooth on the sample and thus prevents one from using a delta method.

\smallskip

Bigot, Cazelles and Papadakis in \cite{bigotcazpap} observed that when one discretizes continuous measures, the corresponding (discrete) barycenters exhibit strong oscillations and proposed to add an  entropic penalization to the Wasserstein variance functional to rule out such discretization artefacts. Such regularizations were also considered in a more general setting by the third author in  \cite{kroshnin2018frechet}. Once one adds an entropic term, the free-boundary aspect of the un-regularized Wasserstein problem disappears and one can expect regularity and quite strong estimates by  PDE arguments. The objective of this paper is precisely to investigate the regularizing effect of the entropic penalty term. Starting from the optimality condition which consists in an elliptic system of Monge--Amp\`ere equations, we will prove various bounds (on the Fisher information, by a maximum principle, or higher regularity based on the regularity theory for Monge--Amp\`ere). We will then consider the stochastic setting of entropic Wasserstein barycenters of random i.i.d.\ measures. As a consequence of our estimates, we will  obtain a strengthened form of the law of large numbers (that is, not only for a.s.\ convergence in Wasserstein distance, but also for Sobolev norms) and more importantly, under suitable additional assumptions, we will obtain a CLT.

\smallskip

The paper is organized as follows. In section~\ref{sec-assump}, we introduce the setting and prove existence and uniqueness of the entropic Wasserstein barycenter. The entropic barycenter is then characterized by a system of Monge--Amp\`ere equations in section~\ref{sec-charact} where we treat the Gaussian case as a simple application. Section~\ref{sec-properties} is devoted to further properties: global moment and Sobolev bounds, strong stability and a maximum principle. Higher regularity is considered in section~\ref{sec-reg} first in the bounded case and then on $\R^d$ for log-concave measures. Section~\ref{sec-CLT} deals with asymptotic results for entropic barycenters of empirical measures with a law of large numbers and a CLT. Finally, the appendix gathers some material related to the linearization of Monge--Amp\`ere equations and to auxiliary probability results which are used in the proof of our CLT. %\label{sec-intro}
\section{Setting, assumptions and preliminaries}\label{sec-assump}

We denote by $\P_2(\R^d)$ the set of Borel probability measures on $\R^d$ having a finite second moment and equip $\P_2(\R^d)$ with the $2$-Wasserstein metric, $W_2$. Recall that for $(\mu, \nu) \in \P_2(\R^d)^2$, the squared $2$-Wasserstein distance between $\mu$ and $\nu$ is defined as the value of the optimal transport problem:
\begin{equation}\label{defW2}
	W_2^2(\mu, \nu) \eqset \inf_{\gamma \in \Pi(\mu, \nu)} \int_{\R^d\times \R^d} \norm{x-y}^2 \d \gamma(x,y)
\end{equation}
where $\Pi(\mu, \nu)$ denotes the set of transport plans between $\mu$ and $\nu$, i.e.\ the set of Borel probability measures on $\R^d\times \R^d$ having $\mu$ and $\nu$ as marginals. Equipped with $W_2$, $\P_2(\R^d)$ is a Polish space, which we shall simply call the Wasserstein space. Convergence in $W_2$ is well known to be equivalent to narrow convergence, plus convergence of the second moment. For more on the optimal transport problem and Wasserstein spaces we refer to the textbooks of Villani \cite{Villani1, Villani2} and Santambrogio \cite{Santambrogio}. The Kantorovich duality formula enables one to express $\frac{1}{2}W_2^2(\mu, \nu)$ as the supremum of 
\[ \int_{\R^d} u \d \mu + \int_{\R^d} v \d \nu\]
among pairs of potentials $u$ and $v$ such that 
\[u(x)+v(y) \leq \frac{1}{2} \norm{x-y}^2, \; \forall (x,y)\in \R^d\times \R^d.\]
In the Kantorovich dual, $u$ and $v$ can be chosen to be conjugate to each other in the sense that
\[u(x) = \inf_{y\in \R^d} \left\{ \frac{1}{2} \norm{x-y}^2 - v(y) \right\}, \; 
v(y) = \inf_{x\in \R^d} \left\{ \frac{1}{2} \norm{x-y}^2 - u(x)\right\}.\]
Hence $u$ and $v$ can be chosen semi-concave and the previous relations can conveniently be expressed in terms of the convex potentials
\begin{equation}\label{defdefi}
	\varphi(x) \eqset \frac{1}{2} \norm{x}^2 - u(x), \; \psi(y) \eqset \frac{1}{2} \norm{y}^2 - v(y)
\end{equation}
by the fact that $\varphi$ and $\psi$ are Legendre transforms of each other
\[\varphi=\psi^*, \; \psi=\varphi^*.\]
The existence of optimal potentials $u$ and $v$ for the Kantorovich dual is well known. An important result due to Brenier \cite{Brenier} says that if, in addition, $\mu$ is absolutely continuous with respect to the Lebesgue measure, then the optimal transport problem in \eqref{defW2} has a unique solution and is given by $\gamma=(\id, \nabla \varphi)_\# \mu$ where $\varphi$ is the convex potential defined in \eqref{defdefi}. The Brenier map $\nabla \varphi$ is unique up to a $\mu$-negligible set, and is the unique gradient of a convex map transporting $\mu$ to $\nu$. If $\mu$ is absolutely continuous and almost everywhere strictly positive, the potential $\varphi$ is unique up to an additive constant. In this case, we denote it $\varphi_\mu^\nu$ and likewise denote by $u_\mu^\nu$ the semi concave Kantorovich potential $u_\mu^\nu = \frac{1}{2} \norm{\cdot}^2 - \varphi_\mu^\nu$.

Now we give ourselves a Borel (with respect to the Wasserstein metric) probability measure $P$ on $\P_2(\R^d)$ such that 
\begin{equation}\label{finitemomentP}
	\int_{\P_2(\R^d)} m_2(\nu) \d P(\nu) < +\infty,
\end{equation}
where $m_2(\nu)$ denotes the second moment of $\nu$ i.e.
\begin{equation}\label{defm2}
	m_2(\nu) = \int_{\R^d} \norm{x}^2 \d \nu(x), \; \forall \nu \in \P_2(\R^d).
\end{equation}
Given a regularization parameter $\lambda>0$ and $\Omega$ a nonempty open connected subset of $\R^d$ with a Lebesgue negligible boundary (of particular interest is the case where $\Omega=\R^d$ or $\Omega$ is convex), we are interested in the following problem (which was introduced in \cite{bigotcazpap} as an entropic regularization of the Wasserstein barycenter problem):
\begin{equation}\label{barycentpbm}
	\inf_{\rho \in \P_2(\R^d)} V_{P, \lambda, \Omega}(\rho) \eqset \frac{1}{2} \int_{\P_2(\R^d)} W_2^2(\rho, \nu) \d P(\nu) + \lambda \Ent_{\Omega}(\rho)
\end{equation}
where $\Ent_{\Omega}$ is defined for every $\mu \in \P_2(\R^d)$ by
\[\Ent_{\Omega} (\mu) = \begin{cases}
    \int_\Omega \rho \log \rho, & \text{ if } \mu = \rho dx \text{ and } \int_{\Omega} \rho = 1,\\
    +\infty & \text{otherwise.}
    \end{cases}
\]
If $\Omega = \R^d$ we simply denote $\Ent_{\R^d} = \Ent$ and $V_{P, \lambda, \R^d} = V_{P, \lambda}$. 

\begin{example}
	If $\Omega = \R^d$ and $P = \sum_{i=1}^I p_i \delta_{\delta_{x_i}}$ is concentrated on Dirac masses, \eqref{barycentpbm} can be reformulated as 
	\[
	\inf_{\rho \in \P_2(\R^d)} \frac{1}{2} \int_{\R^d} \norm[\Big]{x-\sum_{i=1}^I p_i x_i}^2 \d \rho(x) + \lambda \Ent(\rho)
	\]
	whose solution is the Gaussian
	\[
	\rho(x) \eqset \frac{1}{(2 \pi \lambda)^{\frac{d}{2}}} \exp\Bigl(-\frac{1}{2 \lambda} \norm[\Big]{x-\sum_{i=1}^I p_i x_i}^2 \Bigr)
	\]
	whereas the (unregularized) Wasserstein barycenter of $P$ is just $\delta_{\sum_{i=1}^I p_i x_i}$. 
\end{example}

By the direct method of the calculus of variations, one easily obtains

\begin{proposition}\label{existbar}
	Assuming \eqref{finitemomentP}, then \eqref{barycentpbm} admits a unique solution.
\end{proposition}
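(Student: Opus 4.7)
The plan is to apply the direct method of the calculus of variations to the convex functional $V_{P,\lambda,\Omega}$ and then deduce uniqueness from the strict convexity provided by the entropy. First I would verify feasibility: any smooth probability density compactly supported in $\Omega$ gives a candidate with finite entropy and finite second moment, and its Wasserstein term is finite thanks to \eqref{finitemomentP} and the triangle inequality for $W_2$.

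The step I expect to require most care is coercivity in the second moment when $\Omega=\R^d$. Using $W_2(\rho,\nu)\geq \abs*{W_2(\rho,\delta_0)-W_2(\delta_0,\nu)}$ together with $(a-b)^2\geq \tfrac12 a^2-b^2$ and integrating against $P$, one obtains
\[
\frac{1}{2}\int W_2^2(\rho,\nu)\,\d P(\nu) \;\geq\; \frac{1}{4}m_2(\rho) - \frac{1}{2}\int m_2(\nu)\,\d P(\nu),
\]
and the last integral is finite by \eqref{finitemomentP}. For the entropy I would compare $\rho$ with a centred Gaussian of variance $\sigma^2$: nonnegativity of the relative entropy yields $\Ent(\rho)\geq -\tfrac{d}{2}\log(2\pi\sigma^2) - \tfrac{1}{2\sigma^2}m_2(\rho)$. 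Choosing $\sigma$ large enough that $\lambda/(2\sigma^2)\leq 1/8$ produces a bound $V_{P,\lambda,\Omega}(\rho)\geq \tfrac18 m_2(\rho)-C$. When $\Omega$ is bounded, Jensen's inequality gives the simpler lower bound $\Ent_\Omega(\rho)\geq -\log\abs{\Omega}$ and the same coercivity follows at once.

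Given coercivity, a minimizing sequence $(\rho_n)$ has uniformly bounded second moments and is therefore tight; up to extraction $\rho_n\to\rho_*$ narrowly, and Fatou yields $m_2(\rho_*)<\infty$. To pass to the liminf in each term of $V_{P,\lambda,\Omega}$, I would invoke that $\rho\mapsto W_2^2(\rho,\nu)$ is narrowly lower semicontinuous for every fixed $\nu$ (standard from Kantorovich duality), apply Fatou inside the $P$-integral, and use the classical narrow lower semicontinuity of $\Ent$. The uniform entropy bound forces $\rho_*$ to be absolutely continuous; since each $\rho_n$ is concentrated on $\clos{\Omega}$ and $\partial\Omega$ is Lebesgue-null, $\rho_*$ is in fact concentrated on $\Omega$, so $\Ent_\Omega(\rho_*)=\Ent(\rho_*)$. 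Thus $\rho_*$ is a minimizer. Uniqueness finally follows from strict convexity of $V_{P,\lambda,\Omega}$: the maps $\rho\mapsto W_2^2(\rho,\nu)$ are convex under linear interpolation of measures, while $\rho\mapsto \Ent_\Omega(\rho)$ is strictly convex on probability densities in $\Omega$, so the term $\lambda\Ent_\Omega$ with $\lambda>0$ makes $V_{P,\lambda,\Omega}$ strictly convex.
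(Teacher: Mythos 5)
Your proof is correct and follows the same overall strategy as the paper: direct method, coercivity from lower bounds on the Wasserstein and entropy terms, lower semicontinuity, and uniqueness from strict convexity of the entropy. The one genuine point of departure is in how you bound the entropy from below. The paper cites the estimate $\Ent_\Omega(\rho)\geq -C(1+m_2(\rho))^\alpha$ with $\alpha\in(\tfrac{d}{d+2},1)$ from \cite{jko}, which is sublinear in $m_2(\rho)$ and hence absorbed automatically by the quadratic Wasserstein coercivity. You instead use nonnegativity of relative entropy against a Gaussian of variance $\sigma^2$, giving the linear bound $\Ent(\rho)\geq -\tfrac{d}{2}\log(2\pi\sigma^2)-\tfrac{1}{2\sigma^2}m_2(\rho)$, and then choose $\sigma$ large enough that the coefficient of $m_2(\rho)$ is dominated by the $\tfrac14 m_2(\rho)$ from the Wasserstein term. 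This is self-contained and slightly more elementary; the paper's bound buys a dimension-dependent sublinear rate at the cost of a citation. One minor imprecision: when you invoke narrow lower semicontinuity of $\Ent$, this is not true on all of $\P_2(\R^d)$ without a uniform bound on second moments along the sequence (needed to control the negative part of $\rho\log\rho$); the paper cites \cite{cdps} for precisely this refinement. You do have the uniform moment bound available at that point, so the conclusion stands, but the hypothesis should be stated.
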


\begin{proof}
	Let $(\rho_n)$ be a minimizing sequence for \eqref{barycentpbm} (necessarily $0$ outside $\Omb$). In what follows $C$ will denote a constant which may vary from one line to the other. Observing that 
	\[ \frac{1}{2} W_2^2(\rho_n, \nu) \geq \frac{1}{4} m_2(\rho_n) - m_2(\nu),\]
	we deduce from the fact that $ V_{P, \lambda, \Omega}(\rho_n)$ is bounded from above and \eqref{finitemomentP} that we have 
	\begin{equation}\label{bdm}
		\frac{1}{4} m_2(\rho_n) + \lambda \Ent_{\Omega}(\rho_n) \leq C.
	\end{equation}
	It now follows from \cite{jko} that for $\alpha \in (\frac{d}{d+2}, 1)$, one can bound from below the entropy by 
	\begin{equation}\label{bdentm}
		\Ent_{\Omega}(\rho) \geq - C(1+m_2(\rho))^{\alpha}
	\end{equation}
	with \eqref{bdm} this shows that $m_2(\rho_n)$ is bounded so that $(\rho_n)$ is tight. One may therefore assume, taking a subsequence if necessary that $(\rho_n)$ converges narrowly to some $\rhob$. Of course $\rhob\in \P_2(\R^d)$ (with $m_2(\rhob) \leq \liminf_n m_2(\rho_n)$) and $\rhob$ vanishes outside $\Omb$. Now since $m_2(\rho_n)$ is bounded and $(\rho_n)$ converges narrowly to $\rhob$ we have (see e.g.\ the appendix of \cite{cdps} for details):
	\begin{equation}\label{liminfent}
		\Ent_{\Omega}(\rhob)\leq \liminf_n \Ent_{\Omega}(\rho_n).
	\end{equation}
	and $\Ent_{\Omega}(\rhob)>-\infty$ thanks to \eqref{bdentm}.
	We also have for every $\nu\in \P_2(\R^d)$
	\[W^2_2(\rhob, \nu) \leq \liminf_n W_2^2(\rho_n, \nu)\]
	hence, by Fatou's Lemma:
	\[\int_{\P_2(\R^d)} W_2^2(\rhob, \nu) \d P(\nu) \leq  \liminf_n \int_{\P_2(\R^d)} W_2^2(\rho_n, \nu) \d P(\nu)\]
	which, together with \eqref{liminfent} enables us to conclude that $\rhob$ solves \eqref{barycentpbm}. The uniqueness of the minimizer directly follows from the strict convexity of the entropy and the convexity of the Wasserstein term. 
\end{proof}

Entropic-Wasserstein barycenters can therefore be defined as follows:

\begin{definition}
	The unique solution $\rhob$ of \eqref{barycentpbm} is called the entropic-Wasserstein barycenter of $P$ with respect to $\lambda$ and $\Omega$ and denoted $\bary_{\lambda, \Omega}(P)$ and simply $\bary_{\lambda}(P)$ if $\Omega = \R^d$.
\end{definition}
\begin{remark} Theorem~4.4 in \cite{bigotcazpap} states under the additional assumption that $\Omega$ is convex and compact (note that taking the closure of $\Omega$ does not change the entropic-Wasserstein barycenter) and $\supp \nu \subset \Omega$ for $P$-a.e.\ $\nu$, then 
	\[
	W_2(\bary_{\lambda, \Omega}(P), \bary_{0, \Omega}(P)) \rightarrow 0 \quad\text{as}\quad \lambda \rightarrow 0,
	\]
	provided $\bary_{0, \Omega}(P)$ is unique. By inspecting their proof, one can actually see that the compactness assumption on $\Omega$ can be relaxed by the same argument as in Proposition~\ref{existbar}. The assumption on the inclusion of the support can also be omitted if one understands $\bary_{0, \Omega}(P)$ as the Wasserstein barycenter of $P$ constrained to have support in $\Omega.$
\end{remark}
 
Since $\bary_{\lambda, \Omega}(P)$ is absolutely continuous with respect to the Lebesgue measure, we shall slightly abuse notations and use the same notation for its density.

We can immediately state some basic invariance properties of entropic-Wasserstein barycenters in case $\Omega = \R^d$. For instance, if we shift all measures $\nu$ by some vector $s \in \R^d$ and rotate by some orthogonal matrix $Q \in O(d)$, then entropic-Wasserstein barycenters will be also shifted and rotated by the same vector and matrix (clearly, the same result holds for any subgroup of translations and orthogonal transformations that $\Omega$ is invariant to). The next proposition shows that translations can actually  be ``factored out'' from the barycenter.

\begin{proposition}\label{prop:translation}
	Let $\Omega = \R^d$, $\lambda > 0$, $P$ be a measure on $\P_2(\R^d)$ satisfying condition \eqref{finitemomentP}, and $\rhob = \bary_{\lambda}(P)$. Fix a measurable map $\bm{s} \in L^2\left(P; \R^d\right)$ and define a measure
	$P_{\bm{s}} \eqset (\nu \mapsto \nu + \bm{s}(\nu))_\# P$, where $\nu \oplus s \eqset (x \mapsto s + x)_\# \nu$ for all $\nu \in \P_2(\R^d)$ and $s \in \R^d$.
	Then $\bary_{\lambda}(P_{\bm{s}}) = \rhob \oplus \bar{s}$, with $\bar{s} \eqset \int_{\P_2(\R^d)} \bm{s}(\nu) \d P(\nu)$.
\end{proposition}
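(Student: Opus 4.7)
The plan is to show that the functional $V_{P_{\bm s}, \lambda}$ evaluated at the translated measure $\rho \oplus \bar s$ coincides, up to a $\rho$-independent constant, with $V_{P,\lambda}(\rho)$, so that the minimizer of one is the $\bar s$-translate of the minimizer of the other. Uniqueness (Proposition~\ref{existbar}) then gives the claim.

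First I would record the key translation identity for the quadratic Wasserstein cost: for $\rho, \nu \in \P_2(\R^d)$ and $s, t \in \R^d$, the map $\gamma \mapsto (x+s, y+t)_\#\gamma$ is a bijection from $\Pi(\rho,\nu)$ onto $\Pi(\rho \oplus s, \nu \oplus t)$ and
\[
\int \|x+s - y - t\|^2 \d\gamma(x,y) = \int \|x-y\|^2 \d\gamma(x,y) + 2(s-t)\cdot(m_1(\rho) - m_1(\nu)) + \|s-t\|^2,
\]
where $m_1(\mu) \eqset \int x \d\mu(x)$. Since the right-hand side depends on $\gamma$ only through its marginals, infimizing both sides yields
\[
W_2^2(\rho \oplus s, \nu \oplus t) = W_2^2(\rho, \nu) + 2(s-t)\cdot(m_1(\rho) - m_1(\nu)) + \|s-t\|^2.
\]
Note that both $m_1(\rho)$ and $\nu \mapsto m_1(\nu)$ are well-defined and $P$-integrable by \eqref{finitemomentP} and Cauchy--Schwarz, and $\bm s \in L^2(P; \R^d)$ by assumption, so all forthcoming integrals make sense.

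Next I would apply this with $s = \bar s$, $t = \bm s(\nu)$ and integrate against $P$. Using $\int(\bar s - \bm s(\nu))\d P(\nu) = 0$, the term linear in $m_1(\rho)$ vanishes, so
\[
\int W_2^2(\rho \oplus \bar s, \nu \oplus \bm s(\nu))\d P(\nu) = \int W_2^2(\rho, \nu)\d P(\nu) + C(P, \bm s),
\]
with $C(P,\bm s)$ independent of $\rho$. Combined with the translation invariance of the entropy on $\R^d$, namely $\Ent(\rho \oplus \bar s) = \Ent(\rho)$, this gives
\[
V_{P_{\bm s}, \lambda}(\rho \oplus \bar s) = V_{P, \lambda}(\rho) + \tfrac{1}{2} C(P, \bm s).
\]

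Finally, since $\tilde\rho \mapsto \tilde\rho \oplus (-\bar s)$ is a bijection of $\P_2(\R^d)$ onto itself, any minimizer of $V_{P_{\bm s},\lambda}$ is of the form $\rho \oplus \bar s$ where $\rho$ minimizes $V_{P,\lambda}$. By uniqueness, $\rho = \rhob$, whence $\bary_\lambda(P_{\bm s}) = \rhob \oplus \bar s$. There is no real obstacle here; the only thing to check carefully is the vanishing of the cross term upon integration, which is precisely where the choice $\bar s = \int \bm s \d P$ is used, and the integrability conditions ensuring that the formal manipulations are justified.
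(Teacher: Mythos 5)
Your proof is correct, and it takes a cleaner route than the paper's. The paper invokes the bias-variance decomposition $W_2^2(\mu,\nu) = W_2^2(\mu \ominus \E_\mu[X], \nu \ominus \E_\nu[X]) + \norm{\E_\mu[X] - \E_\nu[X]}^2$, reduces to the centered case $\E_\nu[X]=0$, and must first establish that $\E_{\rhob}[X]=0$ (by taking $\bm s \equiv 0$ and optimizing in $a$) before optimizing over centered $\rho$ and over $a$ separately. You instead establish the exact translation identity for the transport cost, $W_2^2(\rho \oplus s, \nu \oplus t) = W_2^2(\rho,\nu) + 2(s-t)\cdot(m_1(\rho)-m_1(\nu)) + \norm{s-t}^2$, observe that integrating against $P$ with $t = \bm s(\nu)$ and $s = \bar s$ kills the $m_1(\rho)$-dependent cross term by the choice of $\bar s$, and obtain directly that $V_{P_{\bm s},\lambda}(\rho \oplus \bar s) = V_{P,\lambda}(\rho) + C$ for \emph{all} $\rho$. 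Since $\rho \mapsto \rho \oplus \bar s$ is a bijection of $\P_2(\R^d)$, uniqueness finishes the argument with no intermediate claims. The one thing the paper's route buys is that it surfaces the identity $\E_{\rhob}[X] = \int \E_\nu[X]\,\d P(\nu)$ as an explicit intermediate step, which is recorded as a corollary; your proof yields the same fact only indirectly (via the proposition applied with $\bm s(\nu) = -\E_\nu[X]$ and the already-known centering of the barycenter of the centered family). The integrability checks you flag are indeed the only points requiring care, and condition~\eqref{finitemomentP} together with $\bm s \in L^2(P;\R^d)$ handles them.
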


\begin{proof}
	Note that it is enough to consider the case $\E_\nu[X] = 0$ for $P$-a.e.\ $\nu$, where $\E_\nu[X] = \int_{\R^d} x \d \nu(x)$ is the average of $\nu \in \P_2(\R^d)$. 
	Recall that due to the bias-variance decomposition
	\[
	W_2^2(\mu, \nu) = W_2^2(\mu \ominus \E_\mu[X], \nu \ominus \E_\nu[X]) + \norm{\E_\mu[X] - \E_\nu[X]}^2, \quad \mu, \nu \in \P_2(\R^d).
	\]
	Since entropy is invariant to shifts, we get for any $\rho \in \P_2(\R^d)$ and $a \in \R^d$
	\begin{align*}
		V_{P_{\bm{s}}, \lambda}(\rho \oplus a) &= \frac{1}{2} \int_{\P_2(\R^d)} \bigl[W_2^2(\rho \ominus \E_{\rho}[X], \nu) + \norm*{\E_\rho[X] + a - \bm{s}(\nu)}^2\bigr] \d P(\nu)  \\
		&\quad\quad + \lambda \Ent(\rho) \\
		&= V_{P, \lambda}(\rho) -  \frac{1}{2} \norm{\E_\rho[X]}^2 +   \frac{1}{2} \norm*{a + \E_\rho[X] - \bar{s}}^2 + C.
	\end{align*}
	In particular, taking $\bm{s} \equiv 0$, $\rho = \rhob$, and using that the minimum with respect to $a$ is attained at $0$, we get that $\E_{\rhob}[X] = 0$. Now, we can first minimize $V_{P, \lambda}(\rho)$ over $\rho$'s with zero mean: $\E_\rho[X] = 0$, and then minimize the third term with respect to $a$, hence $\bary_\lambda(P_{\bm{s}}) = \rhob \oplus a$, $a = \bar{s}$.
	The claim follows.
\end{proof}

\begin{remark}
	Note that, when $\Omega=\R^d$, a useful corollary of Proposition~\ref{prop:translation} is that the average of entropic-Wasserstein barycenter is the expectation of averages:
	\begin{equation}\label{eq:average}
		\E_{\rhob}[X] = \int_{\P_2(\R^d)} \E_\nu[X] \d P(\nu).
	\end{equation}
\end{remark}

%\label{sec-assump}
\section{Characterization}\label{sec-charact}

The entropic term forces the regularized barycenter to be everywhere positive. Indeed, arguing in a similar way as in Lemma~8.6 from \cite{Santambrogio}, we arrive at:
\begin{lemma}\label{rhobpos}
	Let $\rhob \eqset \bary_{\lambda, \Omega}(P)$ then $\rhob>0$ a.e.\ on $\Omega$ and $\log(\rhob) \in L^1_\loc(\Omega)$.
\end{lemma}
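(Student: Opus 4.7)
The plan is to obtain both statements at once from a variational inequality produced by a nonnegative bump perturbation of $\rhob$, together with a monotone passage to the limit exploiting the convexity of $t\mapsto t\log t$. Fix a compact set $K\subset \Omega$ with $|K|>0$, set $\eta\eqset |K|^{-1}\chi_K$ (a bounded probability density with $m_2(\eta)<\infty$, hence with $\frac12\int W_2^2(\eta,\nu)\d P(\nu)<+\infty$ by \eqref{finitemomentP}), and for $\eps\in(0,1)$ consider the competitor $\rho_\eps\eqset (1-\eps)\rhob+\eps\eta$, which is an admissible probability density supported in $\Omb$. Since $\rho\mapsto\frac12\int W_2^2(\rho,\nu)\d P(\nu)$ is convex (by averaging optimal transport plans) and $\rhob$ minimizes $V_{P,\lambda,\Omega}$, one obtains after rearrangement
\[
\Ent_\Omega(\rho_\eps) - \Ent_\Omega(\rhob) \;\ge\; -\eps\,C_0, \qquad C_0\eqset \frac{1}{2\lambda}\int\bigl[W_2^2(\eta,\nu)-W_2^2(\rhob,\nu)\bigr]\d P(\nu)\in\R.
\]

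The next step is to let $\eps\to 0^+$. Convexity of $t\mapsto t\log t$ together with the affinity of $\eps\mapsto \rho_\eps(x)$ for each $x$ implies that the pointwise quotient
\[
h_\eps(x)\eqset \frac{\rho_\eps(x)\log\rho_\eps(x)-\rhob(x)\log\rhob(x)}{\eps}
\]
is non-decreasing in $\eps>0$, and thus decreases monotonically as $\eps\downarrow 0^+$ to $(\eta-\rhob)(\log\rhob+1)$, with the natural convention that this value equals $-\infty$ on $\{\rhob=0\}\cap\{\eta>0\}$. For any fixed $\eps_0\in(0,1)$, $h_{\eps_0}\in L^1(\R^d)$: the positive parts of $\rho_{\eps_0}\log\rho_{\eps_0}$ and $\rhob\log\rhob$ are integrable because the corresponding entropies are finite, while the negative parts are controlled by the bound \eqref{bdentm} using the finite second moments of $\rho_{\eps_0}$ and $\rhob$. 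Monotone convergence for decreasing sequences then yields
\[
\lim_{\eps\to 0^+}\frac{\Ent_\Omega(\rho_\eps)-\Ent_\Omega(\rhob)}{\eps} \;=\; \int\eta\log\rhob\,\d x - \Ent_\Omega(\rhob) \;\in\; [-\infty,+\infty).
\]

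Comparing with the variational inequality forces this limit to be at least $-C_0$, whence $\int\eta\log\rhob\,\d x\ge \Ent_\Omega(\rhob)-C_0>-\infty$, i.e., $\int_K(\log\rhob)_-\,\d x<+\infty$ for every compact $K\subset\Omega$. The positive part is handled by the elementary inequality $(\log\rhob)_+\le \rhob$ (trivial when $\rhob\le 1$, and using $\log t\le t$ when $\rhob>1$), so $(\log\rhob)_+\in L^1_\loc(\Omega)$ as well. Hence $\log\rhob\in L^1_\loc(\Omega)$, from which $\rhob>0$ a.e.\ on $\Omega$ follows automatically: otherwise $(\log\rhob)_-$ would equal $+\infty$ on a positive-measure subset of some compact $K\subset\Omega$, contradicting local integrability. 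The main technical obstacle is precisely the monotone-convergence step, since the pointwise limit is genuinely $-\infty$ on the putative zero-set of $\rhob$; it is the uniform lower bound \eqref{bdentm} that furnishes an $L^1$ majorant independent of any prior knowledge of where $\rhob$ vanishes.
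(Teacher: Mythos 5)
Your proof is correct and closely parallels the paper's argument, but streamlines it in a way worth noting. The paper runs two separate perturbations: a Gaussian $g$ (positive everywhere on $\Omega$) to deduce $|\{\rhob=0\}|=0$ via the divergence of $\log t\int_{\{\rhob=0\}}g$, and then a uniform density on a compact $K$ together with Fatou's lemma to show $\int_K\log\rhob>-\infty$. You instead run a single uniform-on-$K$ perturbation and replace Fatou by monotone convergence for the decreasing difference quotients of the convex function $s\mapsto s\log s$, which delivers both conclusions at once: local integrability of $(\log\rhob)_-$ directly yields $L^1_\loc$, and the positivity statement drops out as an a.e.\ consequence since a positive-measure zero set inside $K$ would force $\int_K(\log\rhob)_-=+\infty$. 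The only nontrivial point is the existence of an $L^1$ majorant $h_{\eps_0}$, which you correctly obtain from the Carleman-type bound~\eqref{bdentm} (control of the negative part of $\rho\log\rho$ by the second moment), the finiteness of $\Ent_\Omega(\rhob)$ and $\Ent_\Omega(\eta)$, and convexity of the entropy. Both routes rest on the same triad (variational inequality from optimality, convexity of $t\log t$, a limit theorem with a fixed integrable dominator), so the content is the same; yours has the small advantage of a unified argument and of getting an equality rather than a one-sided inequality in the limit.
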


\begin{proof}
	Let $g$ be a Gaussian density, scaled so as to give mass $1$ to $\Omega$. For $t \in (0,1)$, set $\rho_t \eqset (1-t) \rhob + t g$. The convexity of $\rho \mapsto W_2^2(\rho, \nu)$ together with the optimality of $\rhob$, give
	\[
	\lambda (\Ent_{\Omega}(\rho_t) - \Ent_{\Omega}(\rhob)) \geq \frac{t}{2} \int_{\P_2(\R^d)} [W_2^2(\rhob, \nu) - W_2^2(g, \nu) ] \d P(\nu)
	\]
	so that for some $C$, we have for every $t \in (0,1)$,
	\begin{equation}\label{lbentrt}
		\frac{1}{t} (\Ent_{\Omega}(\rho_t) - \Ent_{\Omega}(\rhob)) \geq C.
	\end{equation}
	Now, observe that 
	\begin{align*}
		\frac{1}{t} (\Ent_{\Omega}(\rho_t)-\Ent_{\Omega}(\rhob)) &= \int_{\{\rhob=0\}} g \log(tg) + \int_{\{\rhob>0\}} \frac{1}{t} (\rho_t \log(\rho_t) - \rhob \log(\rhob)) \\
		& \leq \int_{\{\rhob=0\}} g \log(t g) + \int_{\{\rhob>0\}} (g \log(g) - \rhob \log(\rhob))\\
		&\leq \log(t) \int_{\{\rhob=0\}} g + \Ent_{\Omega}(g) - \Ent_{\Omega}(\rhob)
	\end{align*}
	(where in the second line we have used the convexity of $s \mapsto s \log(s)$). Combining this inequality with \eqref{lbentrt} and letting $t \to 0^+$, we immediately see that $|\{\rhob=0\}| = 0$. 
	
	\smallskip
	
	Let us now show that $\log(\rhob) \in L^1_\loc(\Omega)$. Since $\max(0, \log(\rhob)) \leq \rhob$ we have to show that $\int_K \log(\rhob) > -\infty$ for every compact subset (of positive Lebesgue measure) $K$ of $\Omega$. Calling $\mu$ the uniform probability measure on $K$, setting $\nu_t \eqset \rhob + t (\mu - \rhob)$ for $t \in (0,1)$ and arguing as above, we have 
	\[
	\frac{1}{t} (\Ent_{\Omega}(\nu_t) - \Ent_{\Omega}(\rhob)) \geq C,
	\]
	moreover $\frac{1}{t} (\nu_t \log(\nu_t)-\rhob \log \rhob) \leq \mu \log(\mu) - \rhob \log \rhob \in L^1(\Omega)$, Fatou's Lemma and the previous inequality thus give
	\begin{align*}
		C &\leq \limsup_{t \to 0^+} \frac{1}{t} (\Ent_{\Omega}(\nu_t) - \Ent_{\Omega}(\rhob))\\
		&\leq \int_{\Omega} \limsup_{t \to 0^+} (\nu_t \log(\nu_t) - \rhob \log (\rhob))
		= \int_{\Omega} \log(\rhob)(\mu-\rhob)
	\end{align*}
	and since $\Ent_{\Omega}(\rhob)$ is finite, this gives $\int_K \log(\rhob) > -\infty$.
\end{proof}

The fact that the regularized barycenter is everywhere positive guarantees uniqueness (up to a constant) of the Kantorovich potential between $\rhob$ and $\nu \in \P_2(\R^d)$. This uniqueness is well-known to be very useful in terms of differentiability of $\mu \mapsto W_2^2(\mu, \nu)$ at $\mu = \rhob$ as expressed in Lemma~\ref{diffw2} below (which is slight generalization of Proposition~7.17 in \cite{Santambrogio}). The following inequality will be useful to justify the differentiability of $\mu \mapsto \int_{\P_2(\R^d)} W_2^2(\mu, \nu) \d P(\nu)$ at $\mu = \rhob$:

\begin{lemma}\label{harnacklem}
	Let $\rho \in L^1(\Omega)$ and $\rho > 0$ a.e.\ on $\Omega$. Then for any compact set $K \subset \Omega$ and any convex function $\varphi \colon \Omega \to \R$ one has
	\begin{equation}\label{harnackconv}
		\osc_K \varphi 
		\eqset \max_K \varphi - \min_K \varphi 
		\le \frac{\diam(K) + r}{\inf\limits_{x \in K_{r/2}} \rho(B_{r/2}(x))} \iom \norm{\nabla \varphi} \rho,
	\end{equation}
	where $0 < r \leq d(K, \partial \Omega)$ and $K_{\sigma} = \bigcup_{x \in K} \bar{B}_{\sigma}(x)$ for any $\sigma > 0$.
	Moreover, the Lipschitz constant of $\varphi$ on $K$, $\lip\left(\varphi|_K\right)$, can be estimated as
	\begin{equation}\label{eq:lip_bound_conv}
		\lip\left(\varphi|_K\right) 
		\le \frac{2 \diam(K) + 3 r}{r \inf\limits_{x \in K_{3 r / 4}} \rho(B_{r / 4}(x))} \iom \norm{\nabla \varphi} \rho.
	\end{equation}
\end{lemma}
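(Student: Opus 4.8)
The plan is to prove the oscillation estimate \eqref{harnackconv} directly by a ``thickening'' argument, and then obtain the Lipschitz estimate \eqref{eq:lip_bound_conv} from it by a short, elementary reduction. The tempting first idea for \eqref{harnackconv} --- integrate $\norm{\nabla\varphi}$ along the segment joining the point of $K$ where $\varphi$ is smallest to the point where it is largest --- will not work: it only yields a one-dimensional integral rather than the volume average against $\rho$ appearing on the right, and for non-convex $\Omega$ such a segment may even leave $\Omega$. Instead I would use the supporting hyperplane of $\varphi$ at the maximiser to move the estimate onto a genuinely $d$-dimensional ball.

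Concretely, for \eqref{harnackconv}: fix $x_0, x_1 \in K$ with $\varphi(x_0) = \min_K\varphi$ and $\varphi(x_1) = \max_K\varphi$; assume $\varphi(x_1) > \varphi(x_0)$ (otherwise $\osc_K\varphi = 0$ and there is nothing to prove) and that the right-hand side is finite. Then $x_1$ is not a global minimiser of $\varphi$, so every subgradient $p \in \partial\varphi(x_1)$ is nonzero. Set $\hat p \eqset p/\norm{p}$, $\tilde x \eqset x_1 + \tfrac r2\hat p$ and $B \eqset B_{r/2}(\tilde x)$; since $r \le d(K, \partial\Omega)$, the triangle inequality shows $\tilde x \in K_{r/2}$ and $B \subset \Omega$. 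The key point is that $\varphi \ge \varphi(x_1)$ on all of $B$: for $z \in B$,
\[
\varphi(z) \ge \varphi(x_1) + \langle p, z-x_1\rangle = \varphi(x_1) + \langle p, z - \tilde x\rangle + \tfrac r2\norm{p} \ge \varphi(x_1) + \norm{p}\bigl(\tfrac r2 - \norm{z-\tilde x}\bigr) \ge \varphi(x_1).
\]
Since $\varphi$ is differentiable a.e., for a.e.\ $z \in B$ the supporting inequality at $z$ together with $\varphi(z) \ge \varphi(x_1)$ gives
\[
\osc_K\varphi = \varphi(x_1) - \varphi(x_0) \le \varphi(z) - \varphi(x_0) \le \langle\nabla\varphi(z), z - x_0\rangle \le \norm{\nabla\varphi(z)}\norm{z-x_0} \le \bigl(\diam(K)+r\bigr)\norm{\nabla\varphi(z)},
\]
using $\norm{z-x_0} \le \norm{z - x_1} + \norm{x_1 - x_0} < r + \diam(K)$. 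Multiplying by $\rho(z) \ge 0$, integrating over $B \subset \Omega$, and dividing by $\rho(B) > 0$ --- which is $\ge \inf_{x\in K_{r/2}}\rho(B_{r/2}(x))$ because $\tilde x \in K_{r/2}$ --- yields \eqref{harnackconv}.

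For \eqref{eq:lip_bound_conv} I would first prove the elementary fact that, for any convex $\varphi$, $\lip(\varphi|_K) \le \tfrac2r\,\osc_{K_{r/2}}\varphi$: given $a \ne b$ in $K$, the point $a' \eqset a + \tfrac r2\,\tfrac{a-b}{\norm{a-b}}$ lies in $K_{r/2}$, and $a$ is a convex combination of $b$ and $a'$, so convexity gives $\tfrac{\varphi(a)-\varphi(b)}{\norm{a-b}} \le \tfrac{\varphi(a')-\varphi(a)}{r/2} \le \tfrac2r\,\osc_{K_{r/2}}\varphi$; swapping $a$ and $b$ gives the same bound for $\tfrac{\varphi(b)-\varphi(a)}{\norm{a-b}}$. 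Then I would apply \eqref{harnackconv} with $K$, $r$ replaced by $K_{r/2}$, $r/2$ --- admissible since $d(K_{r/2}, \partial\Omega) \ge r/2$ --- and use $\diam(K_{r/2}) \le \diam(K)+r$ and $(K_{r/2})_{r/4} = K_{3r/4}$ to get $\osc_{K_{r/2}}\varphi \le \bigl(\diam(K) + \tfrac{3r}{2}\bigr)\bigl(\inf_{x\in K_{3r/4}}\rho(B_{r/4}(x))\bigr)^{-1}\iom\norm{\nabla\varphi}\rho$; multiplying by $\tfrac2r$ produces exactly the constant and the sets appearing in \eqref{eq:lip_bound_conv}.

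The only genuinely non-routine step is the thickening: the observation that one should place the auxiliary ball just outside $K$, in the direction of a subgradient at the maximiser, where convexity forces $\varphi \ge \max_K\varphi$, rather than trying to move along a segment from the minimiser to the maximiser. Everything else is the triangle inequality plus two standard facts (convex functions are a.e.\ differentiable and have nonempty subdifferentials), and the trivial cases ($\osc_K\varphi = 0$ or $\iom\norm{\nabla\varphi}\rho = +\infty$) are handled at the outset. One caveat worth recording: ``$\varphi$ convex'' should be read in the sense that $\varphi$ is the restriction of a convex function on $\R^d$ --- as is the case for the Kantorovich potentials to which the lemma is applied --- so that the supporting hyperplane at each point is valid globally, in particular relating the (possibly distant, and not joined by a segment inside $\Omega$) points $x_0 \in K$ and $z \in B$.
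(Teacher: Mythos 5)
Your proof is correct and follows essentially the same route as the paper: the auxiliary ball $B_{r/2}\bigl(x_1+\tfrac r2\hat p\bigr)$ on which $\varphi\ge\max_K\varphi$ is exactly the ball the paper extracts from its half-ball $W_r(x_1,w)$, and the subgradient inequality at a.e.\ $z$ in that ball against $x_0$ is the paper's key estimate $\norm{\nabla\varphi(z)}\ge \osc_K\varphi/\norm{z-x_0}$. The Lipschitz bound is likewise obtained by applying the oscillation estimate to $K_{r/2}$ with radius $r/2$; your three-slope argument for $\lip(\varphi|_K)\le\tfrac2r\osc_{K_{r/2}}\varphi$ is an equivalent (equally valid) substitute for the paper's observation that $\osc_{B_{r/2}(x)}\varphi\ge\tfrac r2\norm{w}$ for $w\in\partial\varphi(x)$.
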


\begin{remark}
	Notice that $\Omega$ is not necessary convex, thus we say a function $\varphi$ on $\Omega$ is convex if it can be extended to a convex function on $\R^d$ (possibly taking value $+\infty$), see \cite{figalli2017monge}.
\end{remark}

\begin{proof}
	Let $x_1 \in \argmax_{K} \varphi$, $x_0 \in \argmin_{K} \varphi$, and $w \in \partial \varphi(x_1)$. Then for any $x \in \Omega$ and $z \in \partial \varphi(x)$ one has
	\[
	\varphi(x_0) + z \cdot (x - x_0) \ge \varphi(x) \ge \varphi(x_1) + w \cdot (x - x_1),
	\]
	and thus the Cauchy--Schwarz inequality yields
	\[
	\norm{z} \ge \frac{\osc_K \varphi + w \cdot (x - x_1)}{\norm{x - x_0}}.
	\]
	Since $\varphi$ is a.e.\ differentiable, we have
	\begin{align*}
		\iom \norm{\nabla \varphi} \rho
		&\ge \int_{W_r(x_1,w)} \norm{\nabla \varphi} \rho
		\ge \osc_K \varphi \int_{W_r(x_1,w)} \frac{1}{\norm{x - x_0}} \rho(x) \d x \\
		& \ge \osc_K \varphi \int_{W_r(x_1,w)} \frac{1}{\norm{x - x_1} + \norm{x_1 - x_0}} \rho(x) \d x\\
		& \ge \osc_K \varphi \, \frac{\rho\left(B_{r/2}\left(x + \tfrac{r w}{2 \norm{w}}\right)\right)}{\diam(K) + r},
	\end{align*}
	where we have set $W_r(x, w) \eqset \left\{y \in B_r(x) : w \cdot (y - x) \ge 0\right\}$ and used the fact that $B_{r/2}\left(x + \tfrac{r w}{2 \norm{w}}\right) \subset W_r(x, w)$ and $x + \tfrac{r w}{2 \norm{w}} \in K_{r/2}$.
	Finally, the positivity of $\rho$ together with the compactness of $K$ implies that 
	\[
	\inf\left\{\rho(W_r(x, w)) : x \in K,\; w \in \R^d\right\} 
	\ge \inf_{x \in K_{r/2}} \rho(B_{r/2}(x)) > 0.
	\]
	The first claim follows.
	
	To prove~\eqref{eq:lip_bound_conv} we apply~\eqref{harnackconv} to $K_{r / 2}$, which yields
	\begin{align*}
		\osc_{K_{r / 2}} \varphi
		&\le \frac{\diam(K_{r / 2}) + r / 2}{\inf\limits_{x \in K_{3 r / 4}} \rho\left(B_{r / 4}(x)\right)} \iom \norm{\nabla \varphi} \rho \\
		&\le \frac{\diam(K) + 3 r/2}{\inf\limits_{x \in K_{3 r / 4}} \rho\left(B_{r / 4}(x)\right)} \iom \norm{\nabla \varphi} \rho.
	\end{align*}
	Note that for any $x \in K$ and $w \in \partial \varphi(x)$ one has $B_{r / 2}(x) \subset K_{r / 2}$, hence
	\[
	\osc_{K_{r / 2}} \varphi \ge \osc_{B_{r / 2}(x)} \varphi \ge \frac{r}{2} \norm{w}.
	\]
	Therefore,
	\[
	\norm{w} \le \frac{2}{r} \osc_{K_{r / 2}} \varphi 
	\le \frac{2 \diam(K) + 3 r}{r \inf\limits_{x \in K_{3 r / 4}} \rho\left(B_{r / 4}(x)\right)} \iom \norm{\nabla \varphi} \rho,
	\]
	thus we obtain the desired bound on $\lip\left(\varphi|_K\right)$.
\end{proof}

\begin{lemma}\label{diffw2}
	Let $\rhob \eqset \bary_{\lambda, \Omega}(P)$ and given $\nu \in \P_2(\R^d)$, let $u_{\rhob}^\nu$ be the (unique on $\Omega$, up to an additive constant) Kantorovich potential between $\rhob$ and $\nu$. Let $\mu \in L^1(\Omega)$ be a probability density such that $\mu - \rhob$ has compact support in $\Omega$, defining $\rho_\eps \eqset \rhob + \eps (\mu - \rhob)$ for $\eps \in (0, \frac{1}{2})$, we have
	\[
	\lim_{\eps \to 0^+} \frac{1}{2 \eps} [W_2^2(\rho_\eps, \nu) - W_2^2(\rhob, \nu)] 
	= \int_{\Omega} u_{\rhob}^\nu \d (\mu - \rhob)
	\]
	and
	\[
	\lim_{\eps \to 0^+} \frac{1}{2 \eps} \int_{\P_2(\R^d)} [W_2^2(\rho_\eps, \nu) - W_2^2(\rhob, \nu)] \d P(\nu) 
	= \int_{\P_2(\R^d)} \Bigl( \int_{\Omega} u_{\rhob}^\nu \d (\mu - \rhob)\Bigr) \d P(\nu).
	\]
\end{lemma}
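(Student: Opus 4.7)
The plan is to combine Kantorovich duality with a stability argument for the optimal potentials. Fix $\nu \in \P_2(\R^d)$ and let $(u_\eps, v_\eps)$ be any pair of optimal Kantorovich potentials for $(\rho_\eps, \nu)$; likewise let $(u, v) \eqset (u_{\rhob}^\nu, v_{\rhob}^\nu)$, which is unique on $\Omega$ up to an additive constant by Lemma~\ref{rhobpos}. Since $(u, v)$ is admissible for $(\rho_\eps, \nu)$ and $(u_\eps, v_\eps)$ is admissible for $(\rhob, \nu)$, testing each optimum against the other pair of marginals yields the sandwich
\begin{equation*}
\int_{\Omega} u \d (\mu - \rhob) \;\le\; \frac{1}{2\eps}\bigl[W_2^2(\rho_\eps, \nu) - W_2^2(\rhob, \nu)\bigr] \;\le\; \int_{\Omega} u_\eps \d (\mu - \rhob),
\end{equation*}
whose extreme quantities are invariant under shifting $u$ or $u_\eps$ by a constant since $\mu - \rhob$ has zero total mass.

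To prove the first limit, I will show $\int u_\eps \d (\mu - \rhob) \to \int u \d (\mu - \rhob)$. Fix a compact set $K \subset \Omega$ containing $\supp(\mu - \rhob)$ and choose $r < d(K, \partial \Omega)$. Because $\rho_\eps \ge \tfrac{1}{2} \rhob$ for $\eps \in (0, \tfrac{1}{2})$ and $\rhob > 0$ a.e.\ on $\Omega$, the continuous function $x \mapsto \rhob(B_{r/2}(x))$ is positive on the compact set $K_{r/2}$ and hence bounded below by a positive constant independent of $\eps$. Writing $\varphi_\eps \eqset \tfrac{1}{2}\norm{\cdot}^2 - u_\eps$, the pushforward identity $(\nabla\varphi_\eps)_\# \rho_\eps = \nu$ combined with Cauchy--Schwarz gives $\int \norm{\nabla\varphi_\eps} \rho_\eps \le \sqrt{m_2(\nu)}$. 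Lemma~\ref{harnacklem} then provides both $\osc_K u_\eps \le C_K(1 + \sqrt{m_2(\nu)})$ and a uniform Lipschitz bound for $u_\eps$ on $K$, with $C_K$ depending only on $K$ and $\rhob$. After normalizing $u_\eps(x_0) = 0$ at a fixed $x_0 \in K$, Arzel\`a--Ascoli produces subsequential uniform limits $\tilde u$ on $K$; by the standard stability of optimal transport (pass to weak cluster points of the corresponding optimal plans), $\tilde u$ is a Kantorovich potential for $(\rhob, \nu)$, hence coincides with $u$ on $K$ up to a constant by uniqueness. Since additive constants cancel against $\mu - \rhob$, the full sequence $\int u_\eps \d (\mu - \rhob)$ converges to $\int u \d (\mu - \rhob)$, which together with the sandwich yields the first identity.

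The integrated version follows by dominated convergence in $\nu$. The sandwich and the oscillation bound give
\begin{equation*}
\left| \frac{1}{2\eps}\bigl[W_2^2(\rho_\eps, \nu) - W_2^2(\rhob, \nu)\bigr] \right| \;\le\; C_K \bigl(1 + \sqrt{m_2(\nu)}\bigr) \, \abs{\mu - \rhob}(K),
\end{equation*}
and by Cauchy--Schwarz together with \eqref{finitemomentP},
\begin{equation*}
\int_{\P_2(\R^d)} \sqrt{m_2(\nu)} \d P(\nu) \;\le\; \sqrt{\int_{\P_2(\R^d)} m_2(\nu) \d P(\nu)} \;<\; +\infty,
\end{equation*}
so the integrand is dominated uniformly in $\eps$ by a $P$-integrable function. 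Dominated convergence then transfers the pointwise limit proved above to the integrated statement. The principal obstacle is the second step---obtaining $\eps$-uniform Lipschitz/oscillation bounds on $u_\eps$ that are locally uniform in $x$ and integrable in $\nu$---which is exactly what Lemma~\ref{harnacklem} combined with the Brenier pushforward variance identity delivers.
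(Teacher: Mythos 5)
Your proposal is correct and follows essentially the same route as the paper's own proof: the Kantorovich-duality sandwich, the pushforward/Cauchy--Schwarz bound on $\int\norm{\nabla\varphi_\eps}\rho$, Lemma~\ref{harnacklem} for $\eps$-uniform local oscillation and Lipschitz estimates, Arzel\`a--Ascoli plus stability of optimal transport and uniqueness of the Brenier potential for $\rhob$, and finally dominated convergence in $\nu$ using \eqref{finitemomentP}. The only cosmetic differences are the normalization ($u_\eps(x_0)=0$ at a point vs.\ the paper's $\min_K\varphi_\eps=0$) and using the weight $\rho_\eps\ge\tfrac12\rhob$ directly in Lemma~\ref{harnacklem} rather than the paper's $(1-\eps)\int\norm{\nabla\varphi_\eps}^2\rhob\le m_2(\nu)$; these are equivalent.
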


\begin{proof}
	Let us shorten notations by defining 
	\[
	u \eqset u_{\rhob}^{\nu}, \; \varphi \eqset \varphi_{\rhob}^{\nu} = \frac{1}{2} \norm{\cdot}^2 - u_{\rhob}^{\nu}
	\]
	and let $u_\eps$ be a Kantorovich potential between $\rho_\eps$ and $\nu$ and $\varphi_\eps \eqset \frac{1}{2} \norm{\cdot}^2 - u_\eps$. Let $K$ be a compact subset of $\Omega$ supporting $\mu - \rhob$ and normalize the potentials $\varphi$ and $\varphi_\eps$ in such a way that their minimum on $K$ is $0$. It immediately follows from the Kantorovich duality formula that
	\begin{equation}\label{ineqdw2}
		\int_{K} u_\eps \d (\mu - \rhob) \geq \frac{1}{2 \eps} [W_2^2(\rho_\eps, \nu) - W_2^2(\rhob, \nu)] 
		\geq \int_{K} u \d (\mu - \rhob).
	\end{equation} 
	Now observe that since $(\nabla \varphi_{\eps})_\# \rho_\eps = \nu$, we have
	\begin{equation}\label{boundwm2}
		m_2(\nu) = \int_{\Omega} \norm{\nabla \varphi_\eps}^2 \rho_\eps 
		\geq (1 - \eps) \int_{\Omega} \norm{\nabla \varphi_\eps}^2 \rhob.
	\end{equation}
	We then deduce from Jensen's inequality a bound on $\int_{\Omega} \norm{\nabla \varphi_\eps} \rhob$ which does not depend on $\eps$. Thanks to Lemma~\ref{harnacklem}, we obtain local uniform bounds on $\varphi_\eps$ and therefore can deduce that for some vanishing sequence of $\eps_n$, $\varphi_{\eps_n}$ converges locally uniformly on $\Omega$ to some convex $\psi$ whose minimum is $0$ on $K$ and that $\nabla \varphi_{\eps_n}$ converges $\rhob$-a.e.\ to $\nabla \psi$. Using continuous bounded test-functions and Lebesgue's dominated convergence theorem, we can pass to the limit in ${\nabla \varphi_{\eps_n}}_\# \rho_{\eps_n} = \nu$ to deduce that $\nabla \psi_\# \rhob = \nu$, which, with our normalization and the uniqueness of Brenier's map implies that $\psi = \varphi$ and also full uniform convergence on $K$ of $u_\eps$ to $u$. Passing to the limit in \eqref{ineqdw2} gives the first claim of the Lemma. 
	
	To prove the second claim, set
	\[
	\theta_\eps(\nu) \eqset \frac{1}{2 \eps} [W_2^2(\rho_\eps, \nu) - W_2^2(\rhob, \nu)]
	\]
	and observe that it follows from \eqref{ineqdw2}-\eqref{boundwm2} and Lemma~\ref{harnacklem} that $\theta_\eps(\nu)$ can be bounded from above and from below by two affine functions of $m_2(\nu)$, the desired result therefore follows from \eqref{finitemomentP}, Lebesgue's dominated convergence theorem and the first claim. 
\end{proof}

We are now in position to characterize the regularized barycenter:

\begin{proposition}\label{prop:characterization}
	Let $\rhob \in \mathcal{P}_2(\R^d)$, then $\rhob = \bary_{\lambda, \Omega}(P)$ if and only if, denoting by $\nabla \varphi_{\rhob}^{\nu}$ Brenier's map between $\rhob$ and $\nu$, there are normalizing constants for $\varphi_{\rhob}^\nu$ such that
	$\rhob$ has a continuous density given by
	\begin{equation}\label{densrhob}
		\rhob(x) \eqset \exp\Bigl(-\frac{1}{2 \lambda} \norm{x}^2 + \frac{1}{\lambda} \int_{\P_2(\R^d)} \varphi_{\rhob}^{\nu}(x) \d P(\nu) \Bigr)
	\end{equation}
	for every $x\in \Omega$. Moreover, $\log(\rhob)$ is semi-convex hence differentiable a.e.\ and for a.e.\ $x\in \Omega$, one has
	\begin{equation}\label{gradopticond}
		x + \lambda \nabla \log(\rhob)(x) = \int_{\P_2(\R^d)} \nabla \varphi_{\rhob}^{\nu}(x) \d P(\nu).
	\end{equation}

\end{proposition}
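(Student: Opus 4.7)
The strategy is to read \eqref{densrhob} as the Euler--Lagrange equation for the strictly convex functional $V_{P, \lambda, \Omega}$ at its unique minimizer, and to invert this via strict convexity for the converse. I will fix a probability density $\mu \in L^1(\Omega)$ with compact support in $\Omega$ and perturb along $\rho_\eps \eqset (1-\eps) \rhob + \eps \mu = \rhob + \eps(\mu - \rhob)$. Lemma~\ref{diffw2} handles the Wasserstein part and yields the derivative $\int_{\P_2(\R^d)} \int_\Omega u_{\rhob}^\nu \d(\mu - \rhob) \d P(\nu)$; for the entropy, convexity of $s \mapsto s \log s$ makes the difference quotient $\eps^{-1}[\rho_\eps \log \rho_\eps - \rhob \log \rhob]$ nondecreasing in $\eps \in (0,1]$ and bounded above by $\mu \log \mu - \rhob \log \rhob$, which is in $L^1(\Omega)$ thanks to Lemma~\ref{rhobpos}, so monotone convergence gives the entropy derivative $\int_\Omega \log(\rhob) \d(\mu - \rhob)$. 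Then $V_{P, \lambda, \Omega}(\rho_\eps) \geq V_{P, \lambda, \Omega}(\rhob)$ becomes
\[
\int_\Omega f \d(\mu - \rhob) \;\geq\; 0, \qquad f \eqset \lambda \log \rhob + \int_{\P_2(\R^d)} u_{\rhob}^\nu \d P(\nu),
\]
for every admissible $\mu$; concentrating $\mu$ on small balls in $\Omega$ and invoking the Lebesgue differentiation theorem gives $f \geq c_0$ a.e.\ on $\Omega$ with $c_0 \eqset \int_\Omega f \rhob$, and combining this with $\int_\Omega (f - c_0) \rhob = 0$ and $\rhob > 0$ a.e.\ (Lemma~\ref{rhobpos}) forces $f \equiv c_0$ a.e.\ on $\Omega$.

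Substituting $u_{\rhob}^\nu = \tfrac{1}{2}\norm{\cdot}^2 - \varphi_{\rhob}^\nu$ and absorbing $c_0$ into the free additive constants of the potentials $\varphi_{\rhob}^\nu$ yields \eqref{densrhob} in the a.e.\ sense. The right-hand side is $\exp\bigl(-\tfrac{1}{2\lambda}\norm{\cdot}^2 + \tfrac{1}{\lambda}\int \varphi_{\rhob}^\nu \d P(\nu)\bigr)$, where the $P$-average $\int \varphi_{\rhob}^\nu \d P(\nu)$ is convex, hence locally Lipschitz on $\Omega$; this provides the continuous representative of $\rhob$, the semi-convexity of $\log \rhob$, and its a.e.\ differentiability via Alexandrov's theorem. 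Differentiating \eqref{densrhob} pointwise and exchanging $\nabla$ with $\int \cdot \d P(\nu)$ (justified by the monotone structure of convex subdifferentials together with dominated convergence, using the $P$-integrability supplied in the proof of Lemma~\ref{diffw2}) then delivers \eqref{gradopticond}. For the converse, the same first-variation computation shows that any $\rhob$ satisfying \eqref{densrhob} with appropriate normalizations makes $\int_\Omega f \d(\mu - \rhob) = 0$ in every compactly supported admissible direction; convexity of $V_{P, \lambda, \Omega}$ extends this nonnegativity to arbitrary competitors by truncation and lower semicontinuity, and strict convexity of the entropy then identifies $\rhob$ as the unique minimizer $\bary_{\lambda, \Omega}(P)$.

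The main obstacle I anticipate is the clean justification of the entropy first variation: $\log \rhob$ is only in $L^1_{\loc}(\Omega)$ (Lemma~\ref{rhobpos}), not globally integrable, and $\log \rho_\eps$ is not uniformly controlled on regions where $\rhob$ is small, so no uniform domination is available and the convexity-driven monotone estimate is genuinely what makes the interchange of limit and integral work. Everything else --- the a.e.\ identity $f \equiv c_0$, the passage to the density formula \eqref{densrhob}, the semi-convexity and continuity of $\rhob$, and the gradient condition \eqref{gradopticond} --- then follows by standard convex-analytic bookkeeping once this derivative has been correctly computed.
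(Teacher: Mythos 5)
Your overall architecture is the right one (first variation to derive the Euler--Lagrange identity, convexity to close the converse, semi-convexity plus differentiation under the integral for \eqref{gradopticond}), and it matches the paper's structure in spirit. But there are two genuine gaps in the execution.

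First, the perturbation you choose is not admissible. You take $\mu$ a compactly supported probability density and set $\rho_\eps = (1-\eps)\rhob + \eps\mu$, then invoke Lemma~\ref{diffw2}. That lemma requires $\mu - \rhob$ (not $\mu$) to have compact support in $\Omega$. Since Lemma~\ref{rhobpos} says $\rhob > 0$ a.e.\ on $\Omega$, $\mu - \rhob$ is compactly supported only if $\Omega$ is bounded, and the proposition is stated for general $\Omega$ (including $\R^d$). So for the interesting unbounded case your difference quotient for the Wasserstein term is not controlled by Lemma~\ref{diffw2}, and the inequality $\int_\Omega f\, \d(\mu-\rhob) \ge 0$ that everything downstream rests on is not established. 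The paper sidesteps this by building the competitor out of $\rhob$ itself: it takes $\mu$ equal to $c_1\rhob$ on $K_1$, $c_2\rhob$ on $K_2$, and $\rhob$ elsewhere, so $\mu-\rhob$ lives on the compact $K_1\cup K_2$. With that choice, the paper works compact-by-compact, proving $V := \lambda\log\rhob + U$ is a.e.\ constant on each compact $K$ (via a separation-into-two-sets contradiction rather than Lebesgue differentiation), and then patches constants along an exhaustion. Your Lebesgue-differentiation idea can be salvaged with perturbations of the same form (e.g.\ $\mu = \rhob + \theta(|B_\delta|^{-1}\ind_{B_\delta} - |K|^{-1}\ind_K)$), but as written your competitor is not allowed. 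Relatedly, your definition $c_0 := \int_\Omega f\rhob$ implicitly assumes $f \in L^1(\rhob)$ globally, which you never verify; the paper's compact normalization avoids this issue.

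Second, the passage from \eqref{densrhob} to \eqref{gradopticond} is not just ``differentiate and exchange $\nabla$ with $\int\cdot\,\d P(\nu)$.'' The nontrivial step, which the paper proves carefully, is that at a point $x$ where the averaged potential $\Phi = \int \varphi_{\rhob}^\nu \d P(\nu)$ is differentiable, the individual potentials $\varphi_{\rhob}^\nu$ must themselves be differentiable at $x$ for $P$-a.e.\ $\nu$: the paper introduces the sets $A_n$ of $\nu$ with two subgradients at distance $\ge 1/n$, shows $P(A_n)=0$ by a homogeneity argument on the error term, and only then applies dominated convergence via the Lipschitz bound of Lemma~\ref{harnacklem}. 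A generic appeal to ``the monotone structure of convex subdifferentials together with dominated convergence'' does not produce the $P$-a.e.\ differentiability needed to write $\nabla\Phi(x) = \int \nabla\varphi_{\rhob}^\nu(x)\,\d P(\nu)$. Finally, your converse is vaguer than the paper's: the paper simply adds the entropy convexity inequality to the Kantorovich duality inequality and uses the identity $\lambda\log\rhob + \int u_{\rhob}^\nu\,\d P(\nu) = 0$ to get $V_{P,\lambda,\Omega}(\mu) \ge V_{P,\lambda,\Omega}(\rhob)$ directly for all admissible $\mu$; the ``truncation and lower semicontinuity'' route you sketch would need to be spelled out, and strict convexity of the entropy alone identifies the minimizer only once you have already shown $\rhob$ is one.
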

 
\begin{proof}
	For necessity fix a compact with nonempty interior subset $K$ of $\Omega$ and normalize $u_{\rhob}^\nu$ such that it has minimum $0$ on $K$, then, arguing as in the proof of Lemma~\ref{diffw2}, there is a constant $C_K$ such that $\norm*{u_{\rhob}^\nu}_{L^\infty}(K) \leq C_K(1+m_2(\nu))$ so that the (semi-concave) potential 
	\[
	x \mapsto U(x) \eqset \int_{\P_2(\R^d)} u_{\rhob}^{\nu}(x) \d P(\nu)
	\]
	is bounded on $K$. Now we claim that $V\eqset\lambda \log(\rhob)+ U$ (which is integrable on $K$ thanks to Lemma~\ref{rhobpos}) coincides Lebesgue a.e.\ with a constant on $K$ (which taking an exhaustive sequence of compact subsets of $\Omega$ will enable to find normalizing constants for $\varphi_{\rhob}^{\nu}$ that do not depend on $K$ and therefore prove \eqref{densrhob}). Assume, by contradiction, that $V$ does not coincide Lebesgue a.e.\ with a constant on $K$, then we could find two measurable subsets $K_1$ and $K_2$ of $K$, both of positive Lebesgue measure and $\alpha\in \R$ and $\delta>0$ such that 
	\begin{equation}\label{sepV}
		V \geq \alpha + \delta \text{ a.e.\ on $K_1$}, \; V\leq \alpha -\delta \text{ a.e.\ on $K_2$}.
	\end{equation}
	In particular $\rhob(K_1)>0$ and $\rhob(K_2)>0$, now set $\beta\eqset \frac{\rhob(K_1)}{2 \rhob(K_2)}$ and define the probability density $\mu\in L^1(\Omega)$ by
	\[
	\mu(x) \eqset \begin{cases} \frac{1}{2} \rhob(x) & \text{if } x \in K_1,\\ 
	(1+\beta) \rhob (x) & \text{if } x \in K_2, \\ 
	\rhob(x) & \text{otherwise}\end{cases}
	\]
	and $\rho_\eps \eqset \rhob + \eps (\mu-\rhob)$. It is straightforward to check that
	\[
	\lim_{\eps \to 0^+} \frac{1}{\eps} (\Ent(\rho_\eps)-\Ent(\rhob)) = \int_{K} \log(\rhob)(\mu-\rhob).
	\]
	With Lemma~\ref{diffw2}, the construction of $\mu$ and \eqref{sepV}, this yields
	\begin{multline*}
		\lim_{\eps\to 0^+} \frac{1}{\eps} [V_{P, \lambda, \Omega}(\rho_\eps)- V_{P, \lambda, \Omega}(\rhob)] = \int_{K} V (\mu-\rhob) \\
		= -\frac{1}{2}\int_{K_1} V \rhob + \beta \int_{K_2} V \rhob \leq -\delta \rhob(K_1)<0
	\end{multline*}
	contradicting the fact by optimality of $\rhob$, $V_{P, \lambda, \Omega}(\rho_\eps)\geq V_{P, \lambda, \Omega}(\rhob)$. 

	Now assume that $\rhob \in \P_2(\R^d)$ satisfies \eqref{densrhob}, and let $\mu \in \P_2(\R^d)$ be supported on $\Omb$ and such that $\Ent_{\Omega}(\mu) < \infty$. Using the convexity of the entropy firstly gives
	\begin{equation}\label{enebotan}
		\lambda \Ent_{\Omega}(\mu) \geq \lambda \Ent_{\Omega}(\rhob)+ \lambda \int_{\Omega} \log(\rhob) (\mu-\rhob).
	\end{equation}	 
	Secondly, by Kantorovich duality formula and using the fact that $u_{\rhob}^\nu$ is a Kantorovich potential between $\rhob$ and $\nu$, we get
	\[\begin{split}
	 \frac{1}{2} \int_{\P_2(\R^d)} W_2^2(\mu, \nu) \d P(\nu) \geq \frac{1}{2} \int_{\P_2(\R^d)} W_2^2(\rhob, \nu) \d P(\nu)\\
	+ \int_{\P_2(\R^d)} \Bigl( \int_{\Omega} u_{\rhob}^\nu \d (\mu - \rhob)\Bigr) \d P(\nu).
	\end{split}\]
	Adding \eqref{enebotan}, observing that \eqref{densrhob} means that $\lambda \log \rhob + \int_{\P_2(\R^d)} u_{\rhob}^\nu \d P(\nu)=0$ and using Fubini's theorem, we thus get
	\[V_{P, \lambda, \Omega}(\mu) \geq V_{P, \lambda, \Omega}(\rhob), \]
	so that $\rhob=\bary_{\lambda, \Omega}(P)$.

	\smallskip
	
	Let us now prove \pref{gradopticond}. Since 
	\[
	\Phi \eqset \int_{\P_2(\R^d)} \varphi_{\rhob}^{\nu} \d P(\nu)
	\]
	is convex, $\log \rhob$ is semi-convex. It is therefore differentiable a.e. Now we claim that if $x \in \Omega$ is a differentiability point of $\Phi$ it also has to be a differentiability point of $\varphi_{\rhob}^{\nu}$ for $P$-almost every $\nu$. Indeed, assume that $\Phi$ is differentiable at $x \in \Omega$. For $n \in \N^*$, let $A_n$ denote the set of $\nu \in \P_2(\R^d)$ for which there exist $p_\nu$ and $q_\nu$ in $\partial \varphi_{\rhob}^{\nu}(x)$ such that $\norm{p_\nu - q_\nu} \geq 1/n$. The desired claim will be established if we prove that $P(A_n) = 0$ for every $n \in \N^*$. Let then $(q_\nu, p_\nu) \in \partial \varphi_{\rhob}^{\nu}(x)^2$ be chosen (in a measurable way) so that $\norm{p_\nu - q_\nu} \geq 1/n$ when $\nu \in A_n$, then, for every $h \in \Omega - x$, one has 
	\[
	\varphi_{\rhob}^{\nu}(x+h)- \varphi_{\rhob}^{\nu}(x) - \frac{1}{2} (p_\nu + q_\nu) \cdot h \geq \frac{1}{2} \abs{(p_\nu - q_\nu) \cdot h},
	\]
	so that, by integration $s \eqset \int_{\P_2(\R^d)} \frac{p_\nu + q_\nu}{2} \d P(\nu) \in \partial \Phi(x) = \{\nabla \Phi(x)\}$ and then
	\[
	\Phi(x+h) - \Phi(x) - s \cdot h = o(h) \geq \frac{1}{2} \int_{A_n} \abs{(p_\nu - q_\nu) \cdot h} \d P(\nu).
	\]
	By homogeneity, we thus have $\int_{A_n} \abs{(p_\nu - q_\nu) \cdot h} \d P(\nu) = 0$ for every $h$ so that $\int_{A_n} \norm{p_\nu - q_\nu} \d P(\nu) = 0 \geq P(A_n)/n$ and therefore $P(A_n) = 0$. Hence, if $\Phi$ is differentiable at $x$, for every $h \in \R^d$, we have:
	\[
	\frac{\varphi_{\rhob}^{\nu}(x + t h) - \varphi_{\rhob}^{\nu}(x)}{t}
	\to \nabla \varphi_{\rhob}^{\nu}(x) \cdot h \text{ as } t \to 0^+, \text{ for $P$-a.e.\ $\nu$}.
	\]
	Moreover, the left-hand side above is controlled in absolute value by the Lipschitz constant of $\varphi_{\rhob}^{\nu}$ in a compact neighbourhood of $x$ which, thanks to Lemma~\ref{harnacklem}, in turn, is controlled by 
	\[
	\int_{\Omega} \norm*{\nabla \varphi_{\rhob}^{\nu}} \rhob = \int_{\R^d} \norm{y} \d \nu(y) \leq \sqrt{m_2(\nu)}.
	\]
	Thanks to \eqref{finitemomentP} and Lebesgue's dominated convergence theorem, we thus get
	\[
	\nabla \Phi(x) = \int_{\P_2(\R^d)} \nabla \varphi_{\rhob}^{\nu}(x) \d P(\nu),
	\]
	which shows \eqref{gradopticond}.
 
\end{proof}

\begin{remark}[A first regularizing effect]\label{rem:BVreg}
	One immediately deduces from \eqref{densrhob} and the convexity of $\varphi_{\rhob}^{\nu}$, further regularity properties of the regularized barycenter:
	\begin{equation}\label{BVnablalogro}
		\log(\rhob) \in L^{\infty}_{\loc}(\Omega), \; \rhob \in W^{1, \infty}_{\loc}(\Omega), \text{ and } \nabla \rhob \in \BV_{\loc}(\Omega, \R^d).
	\end{equation}
\end{remark}

\begin{example}[Gaussian case]
	Suppose now that $P$ is concentrated on Gaussian measures and $\Omega = \R^d$; then the regularized barycenter is Gaussian as well. In order to prove this we can assume thanks to Proposition~\ref{prop:translation} that $P$-a.e.\ $\nu = \mathcal{N}(0, S_\nu)$, where $S_\nu$ are positive semi-definite matrices with $\E_P\left[S_\nu\right] \leq \sigma^2 I$, $\sigma > 0$. We want to prove that there is a positive definite symmetric matrix $\bar{S}$ such that
	\[
	\bary_{\lambda}(P) = \mathcal{N}(0, \bar{S}).
	\]
	In order to see that, recall that the optimal transport $T_{\rho}^{\nu}$ from $\rho = \mathcal{N}(0, S)$ to $\nu = \mathcal{N}(0, S_\nu)$ is given by (see e.g.\ \cite{dowson1982frechet})
	\[
	T_{\rho}^{\nu}(x) \eqset \underbrace{S^{-1/2} \left(S^{1/2} S_\nu S^{1/2}\right)^{1/2} S^{-1/2}}_{\eqqcolon T_S^{S_\nu}} x.
	\]
	Thus $\varphi_\rho^\nu = \frac{1}{2} x \cdot T_S^{S_\nu} x + C$, and the optimality condition \eqref{densrhob} can be rewritten as
	\[
	- \frac{\lambda}{2} x \cdot \bar{S}^{-1} x = - \frac{\norm{x}^2}{2} + \frac{1}{2} \E_P \left[x \cdot T_S^{S_\nu} x\right] + C,
	\]
	i.e.
	\[
	I = \lambda \bar{S}^{-1} + \bar{S}^{-1/2} \E_P\left[\left(\bar{S}^{1/2} S_\nu \bar{S}^{1/2}\right)^{1/2}\right] \bar{S}^{-1/2}.
	\]
	Thus $\bar{S}$ has to be a solution of the following fixed-point equation
	\[
	S = \Phi(S) \eqset \lambda I + \E_P\left[\left(S^{1/2} S_\nu S^{1/2}\right)^{1/2}\right].
	\]
	This has a solution by Brouwer's fixed-point theorem. Indeed, denote by $\alpha_\nu$ the largest eigenvalue of $S_\nu$. Then, by assumption
	\[
	\E_P[\alpha_\nu] \leq \tr \E_P[S_\nu] \leq d \sigma^2.
	\]
	Define
	\[
	\alpha \eqset 2 \lambda + d \sigma^2,
	\]
	then for any $\lambda I \le S \le \alpha I$ it holds that
	\begin{align*}
		\Phi(S) &\leq \left(\lambda + \E_P\left[(\alpha_\nu \alpha)^{1/2}\right]\right) I
		\leq \left(\lambda + \frac{\alpha}{2} + \frac{\E_P[\alpha_\nu]}{2}\right) I \\
		& \leq \left(\lambda + \frac{\alpha}{2} + \frac{d \sigma^2}{2}\right) I
		= \alpha I.
	\end{align*}
	So, $\Phi(\cdot)$ maps the convex set $\{\lambda I \le S \le \alpha I\}$ to itself, and it is clearly continuous. The existence of $\bar{S}$ such that $\bar{S} = \Phi(\bar{S})$ therefore follows from Brouwer's fixed-point theorem.
\end{example}

\begin{example}[Discrete case]\label{discreteex}
	Consider now the case where $\Omega=\R^d$ and $P$ is a discrete measure supported on discrete measures:
	\[
	P = \sum_{i\in I} p_i \delta_{\nu_i} \text{ with } \nu_i \eqset \sum_{j\in J_i} \nu_i^j \delta_{x_i^j} \,,
	\]
	where $I$ and each $J_i$ are finite and for every $i\in I$, the points $(x_i^j)_{j\in J_i}$ are pairwise distinct and the weights $\nu_i^j$ are positive. Then, it follows from Proposition \ref{prop:characterization} that $\rhob = \bary_{\lambda}(P)$ has the form
	\[
	\rhob(x) = \exp\Bigl(-\frac{1}{2\lambda} \norm{x}^2 + \frac{1} {\lambda} \sum_{i\in I} p_i \varphi_i (x)\Bigr)
	\]
	where $\nabla \varphi_i$ is the optimal transport between $\rhob$ and $\nu_i$ so that $\varphi_i$ takes the form
	\[
	\varphi_i(x) = \max_{j\in J_i} \{x \cdot x_i^j - \psi_i^j\} \eqset \varphi_{\psi_i}(x)
	\]
	where the $\psi_i = (\psi_i^j)_{j\in J_i}$ should match the mass conservation condition, i.e.\ be such that 
	\begin{equation}\label{laguerrebar}
		\nu_i^j =\rhob(\partial \varphi_{\psi_i}^{*}(x_i^j)), \; \forall i\in I, \; \forall j\in J_i.
	\end{equation}
	In the semi-discrete optimal terminology, $\partial \varphi_{\psi_i}^{*}(x_i^j)$ is the so-called Laguerre cell where $\varphi_{\psi_i}$ coincides with $x \mapsto x \cdot x_i^j - \psi_i^j$. Computing $\rhob = \bary_{\lambda}(P)$ therefore amounts to finding $\{\psi_i^j, \; i\in I, \; j\in J_i\}$ such that \eqref{laguerrebar} holds for $\rhob$ depending on the $\psi_i^j$ as well:
	\begin{equation}\label{psibarf}
		\rhob(x) = \exp\Bigl( -\frac{1}{2\lambda} \norm{x}^2 + \frac{1} {\lambda} \sum_{i\in I} p_i \max_{j\in J_i} \{x \cdot x_i^j - \psi_i^j\} \Bigr).
	\end{equation}
	Using results from \cite{kmt} concerning the differentiability of the Kantorovich functional in the semi-discrete case, it is easy to see that the nonlinear system \eqref{laguerrebar}--\eqref{psibarf} is the system of Euler--Lagrange equations for the finite-dimensional concave maximization problem
	\begin{equation*}\label{dualpsi}
		\sup_{\psi_i^j} \left[- \sum_{i\in I} p_i \sum_{j\in J_i} \psi_i^j \nu_i^j - \lambda \int_{\R^d} \exp\Bigl( -\frac{1}{2\lambda} \norm{x}^2 + \frac{1} {\lambda} \sum_{i\in I} p_i \max_{j\in J_i} \{x \cdot x_i^j - \psi_i^j\} \Bigr) \d x\right]
	\end{equation*}
	which is the dual of the entropic barycenter problem in this semi-discrete setting.
\end{example}

%\label{sec-charact}
\section{Properties of  the entropic barycenter}\label{sec-properties}

\subsection{Global bounds}

The aim of this paragraph is to emphasize some global bounds on the entropic barycenter which hold in the case where $\Omega$ may be unbounded, in particular it covers the case of the whole space.

\begin{lemma}\label{fisherlemma}
	The entropic-Wasserstein barycenter $\rhob$ of $P$ enjoys the following bound on the Fisher information:
	\[
	\int_{\Omega} \norm*{\nabla \log(\rhob)}^2 \rhob 
	\le \frac{1}{\lambda^2} \int_{\P_2(\R^d)} W^2(\rhob, \nu) \d P(\nu).
	\]
	In particular, $\sqrt{\rhob} \in H^1(\Omega)$, hence in case $\Omega = \R^d$ it holds that $\rhob \in L^{\infty}(\R)\cap C^{0,1/2}(\R)$ if $d = 1$, $\rhob \in L^{q}(\R^2)$ for every $q \in [1,+\infty)$ if $d = 2$ and $\rhob \in L^{\frac{d}{d-2}}(\R^d)$ if $d \geq 3$. 
	Finally, $\left(1 + \norm{x}\right) \nabla \rhob \in L^1(\R^d)$.
\end{lemma}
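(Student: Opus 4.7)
The main idea is to feed the pointwise optimality condition \eqref{gradopticond}, rewritten as
\[
\lambda \nabla \log \rhob(x) = \int_{\P_2(\R^d)} \bigl( \nabla \varphi_{\rhob}^{\nu}(x) - x \bigr) \d P(\nu),
\]
into Jensen's inequality for the convex function $v \mapsto \norm{v}^2$ on $\R^d$ with respect to the probability measure $P$. This produces the a.e.\ pointwise estimate
\[
\lambda^2 \norm{\nabla \log \rhob(x)}^2 \le \int_{\P_2(\R^d)} \norm{\nabla \varphi_{\rhob}^{\nu}(x) - x}^2 \d P(\nu).
\]
Integrating against $\rhob \d x$ and swapping the order of integration (Fubini), I would use that $\nabla \varphi_{\rhob}^{\nu}$ is Brenier's map from $\rhob$ to $\nu$, so that $\int_\Omega \norm{\nabla \varphi_{\rhob}^{\nu} - \id}^2 \rhob = W_2^2(\rhob, \nu)$, which yields the desired Fisher information bound.

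For $\sqrt{\rhob} \in H^1(\Omega)$ I would exploit the formal chain rule $\nabla \sqrt{\rhob} = \tfrac{1}{2} \sqrt{\rhob} \nabla \log \rhob$. To justify it rigorously I approximate by $\sqrt{\rhob + \eps}$, whose weak gradient equals $\nabla \rhob / (2 \sqrt{\rhob + \eps})$ since $t \mapsto \sqrt{t + \eps}$ is Lipschitz; the corresponding $L^2$-norms are uniformly bounded by a quarter of the Fisher information (using $\rhob / (\rhob + \eps) \le 1$ and $\nabla \rhob = \rhob \nabla \log \rhob$ locally, via Remark~\ref{rem:BVreg}), so passing to the weak limit as $\eps \to 0$ gives $\sqrt{\rhob} \in H^1$ with the expected gradient. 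Since $\rhob \in L^1$ also yields $\sqrt{\rhob} \in L^2$, the full $H^1$ membership follows. The claimed integrability on $\R^d$ then follows by the standard Sobolev/Morrey embeddings of $H^1(\R^d)$ applied to $\sqrt{\rhob}$ and squaring: $H^1 \hookrightarrow L^\infty \cap C^{0, 1/2}$ in $d = 1$, $H^1 \hookrightarrow L^q$ for every finite $q$ in $d = 2$, and $H^1 \hookrightarrow L^{2 d / (d-2)}$ in $d \ge 3$.

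Finally, for $(1 + \norm{x}) \nabla \rhob \in L^1(\R^d)$, I write $\nabla \rhob = \rhob \nabla \log \rhob$ (valid a.e.\ by Remark~\ref{rem:BVreg}) and apply the Cauchy--Schwarz inequality:
\[
\int_{\R^d} (1 + \norm{x}) \norm{\nabla \rhob} \d x \le \Bigl( \int_{\R^d} (1 + \norm{x})^2 \rhob \d x \Bigr)^{1/2} \Bigl( \int_{\R^d} \rhob \norm{\nabla \log \rhob}^2 \d x \Bigr)^{1/2}.
\]
The first factor is finite since $m_2(\rhob) < \infty$, and the second is finite by the Fisher information bound. The substantive step is really the one-line Jensen inequality in $\nu$; the only mildly technical part is the rigorous Sobolev chain rule for $\sqrt{\rhob}$, which is however routine given the positivity and local Lipschitz regularity of $\rhob$ established earlier.
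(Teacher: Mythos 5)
Your proof is correct and follows essentially the same route as the paper: Jensen's inequality (in $\nu$) applied to the optimality condition \eqref{gradopticond}, Fubini, and the identity $\int_\Omega \norm{\nabla\varphi_{\rhob}^\nu - \id}^2 \rhob = W_2^2(\rhob,\nu)$ give the Fisher information bound; the paper's final claim is also a Cauchy--Schwarz, written as $\nabla\rhob = 2(1+\norm{x})\sqrt{\rhob}\,\nabla\sqrt{\rhob}$ with both factors in $L^2$, which is your inequality in disguise. You are somewhat more careful than the paper in justifying $\sqrt{\rhob}\in H^1(\Omega)$ via the $\sqrt{\rhob+\eps}$ truncation, but the content is identical.
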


\begin{proof}
	According to Proposition~\ref{prop:characterization}
	\[
	\nabla \log(\rhob(x)) 
	= \frac{1}{\lambda} \int_{\P_2(\R^d)} \left(\nabla \varphi_{\rhob}^\nu(x) - x\right) \d P(\nu)
	= - \frac{1}{\lambda} \int_{\P_2(\R^d)} \nabla u_{\rhob}^\nu(x) \d P(\nu),
	\]
	thus
	\[
	\int_{\Omega} \frac{\norm*{\nabla \rhob}^2}{\rhob} = \int_{\Omega} \norm*{\nabla \log(\rhob)}^2 \rhob
	\le \frac{1}{\lambda^2} \int_{\Omega} \rhob(x) \int_{\P_2(\R^d)} \norm*{\nabla u_{\rhob}^\nu(x)}^2 \d P(\nu) \d x,
	\]
	and using Fubini's Theorem, we get that 
	\[
	\int_{\Omega} \frac{\norm*{\nabla \rhob}^2}{\rhob} 
	\le \frac{1}{\lambda^2} \int_{\P_2(\R^d)} \left[\int_{\Omega} \norm*{\nabla u_{\rhob}^\nu}^2 \rhob\right] \d P(\nu)
	= \frac{1}{\lambda^2} \int_{\P_2(\R^d)} W_2^2(\rhob, \nu) \d P(\nu).
	\]
	Finally, $\left(1 + \norm{x}\right) \nabla \rhob = 2 \left(1 + \norm{x}\right) \sqrt{\rhob} \nabla \sqrt{\rhob}$ belongs to $L^1(\R^d)$ since both $\left(1 + \norm{x}\right) \sqrt{\rhob}$ and $\nabla \sqrt{\rhob}$ are in $L^2(\R^d)$.
\end{proof}

\begin{proposition}\label{higherfisher}
	Let $p \ge 1$, and assume that
	\begin{equation}\label{finitepmoment}
		\int_{\P_2(\R^d)} m_p(\nu) \d P(\nu) < +\infty
	\end{equation}
	(where $m_p(\nu) \eqset \int_{\R^d} \norm{x}^p \d \nu(x)$). Then the entropic-Wasserstein barycenter $\rhob$ of $P$ satisfies $m_p(\rhob) < +\infty$, and more precisely, for any $r > 0$ it holds that
	\begin{equation}\label{eq:moment_bound}
		m_p(\rhob) \le \frac{6^p}{2} \left(r^p + \int_{\P_2(\R^d)} m_p(\nu) \d P(\nu)\right) + \frac{\abs{B_1(0)} \,\Gamma\!\left(\frac{d + p}{2}\right)}{2 \abs{\Omega \cap B_r(0)}} (96 \lambda)^{(d+p)/2}.
	\end{equation}
	In particular, if $\Omega = \R^d$, then
	\begin{equation}\label{eq:moment_bound_Rd}
		m_p(\rhob) \le \frac{6^p}{2} \int_{\P_2(\R^d)} m_p(\nu) \d P(\nu) + (3456 \lambda)^{p/2} \,\Gamma\!\left(\frac{d + p}{2}\right).
	\end{equation}
\end{proposition}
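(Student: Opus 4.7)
The plan is to combine the explicit density formula~\eqref{densrhob}---$\rhob(x) = \exp\bigl(\Phi(x)/\lambda - \|x\|^2/(2\lambda)\bigr)$ with $\Phi(x) \eqset \int \varphi_{\rhob}^\nu(x) \d P(\nu)$ convex---with a bulk/tail splitting of the moment integral $m_p(\rhob)$. The bulk part, on $B_{3r}(0)$, contributes the $r^p$ term directly; the tail will be controlled by a pointwise Gaussian upper bound on $\rhob$ obtained from the convex structure of $\Phi$.

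For the Gaussian tail, I would restrict the identity $\int_\Omega e^{\Phi/\lambda - \|x\|^2/(2\lambda)} \d x = 1$ to $\Omega\cap B_r(0)$ and use $e^{-\|x\|^2/(2\lambda)} \geq e^{-r^2/(2\lambda)}$ on $B_r(0)$ to get $\int_{\Omega\cap B_r(0)} e^{\Phi/\lambda} \d x \leq e^{r^2/(2\lambda)}$. Log-convexity of $e^{\Phi/\lambda}$ together with Jensen's inequality then bounds $\Phi$ pointwise at the barycenter $x_0$ of $\Omega \cap B_r(0)$ by $\tfrac{r^2}{2} - \lambda\log|\Omega\cap B_r(0)|$. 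Propagating this estimate by the first-order subgradient inequality for convex $\Phi$ along rays from $x_0$, and completing the square with the term $-\|x\|^2/(2\lambda)$, yields a pointwise Gaussian bound of the form $\rhob(x) \le \frac{C}{|\Omega\cap B_r(0)|}\exp\bigl(-\|x\|^2/(c\lambda)\bigr)$ for $\|x\|\geq 3r$. Integrating $\|x\|^p$ against this bound in polar coordinates produces the factor $\frac{|B_1|\Gamma((d+p)/2)}{2|\Omega\cap B_r(0)|}(96\lambda)^{(d+p)/2}$ via the standard formula $\int_{\R^d}\|x\|^p e^{-\|x\|^2/(c\lambda)}\d x \propto (c\lambda)^{(d+p)/2}\Gamma((d+p)/2)$.

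To introduce the $M_p$ term, I would rely on the triangle inequality $\|x\|^p \le 3^{p-1}\bigl(\|x - T_\nu(x)\|^p + \|T_\nu(x) - y\|^p + \|y\|^p\bigr)$ for the Brenier map $T_\nu = \nabla\varphi_{\rhob}^\nu$ and $y$ chosen as the projection of $T_\nu(x)$ onto $\overline{B_r(0)}$. Integrating against $\rhob(x) \d x \d P(\nu)$ and using the pushforward identity $(T_\nu)_\#\rhob = \nu$, the $\|T_\nu - y\|^p$ and $\|y\|^p$ contributions produce $\tfrac{6^p}{2}(r^p + M_p)$; the residual $\int\int\|x - T_\nu(x)\|^p\rhob \d x \d P(\nu)$ is absorbed into the Gaussian-tail contribution via Jensen's inequality and the identity $\int\|x - S(x)\|^p\rhob = \lambda^p\int\|\nabla\log\rhob\|^p\rhob$ from~\eqref{gradopticond}, where $S(x) = \int\nabla\varphi_{\rhob}^\nu(x)\d P(\nu)$.

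The main obstacle is establishing the Gaussian pointwise bound with clean constants. While convexity of $\Phi$ and the normalization $\int\rhob = 1$ give integral control of $\Phi$ on $B_r(0)$, boosting this to a pointwise upper bound on $\Phi$ that grows strictly slower than $\|x\|^2/2$ requires using the subgradient inequality $\Phi(x) \le \Phi(x_0) + \nabla\Phi(x)\cdot(x - x_0)$ \emph{together with} the global gradient bound $\int\|\nabla\varphi_{\rhob}^\nu\|^2\rhob = m_2(\nu)$ (from the pushforward) and a Cauchy--Schwarz step, so as to complete the square. Carefully balancing these estimates is where the specific numerical constant $96$ emerges.
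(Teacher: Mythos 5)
Your plan hinges on establishing a pointwise Gaussian upper bound $\rhob(x)\lesssim \exp(-\|x\|^2/(c\lambda))$ away from $B_{3r}(0)$, and this is simply false in general: under the stated hypotheses the $\nu$'s need only have finite $p$-th moments and may have polynomial tails. Writing the optimality condition $\lambda \log\rhob(x) = \varphi(x) - \|x\|^2/2 + C$ with $\varphi' = T_{\rhob}^\nu$ (take $P=\delta_\nu$, $d=1$, for concreteness), one sees that if $\nu(y)\sim |y|^{-\alpha}$ then the self-consistent tail of $\rhob$ is again polynomial of the same order, not Gaussian: a Gaussian ansatz $\rhob(x)\sim e^{-cx^2}$ forces $T$ to grow super-linearly and hence $\varphi$ to grow super-quadratically, which would make $\log\rhob\to+\infty$, a contradiction. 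So no amount of completing the square will produce the desired pointwise bound. Convexity of $\Phi=\int\varphi_{\rhob}^\nu \d P$ plus integral normalization does not prevent $\Phi$ from approaching $\|x\|^2/2$ (and beyond, up to lower-order corrections) at infinity.

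What the paper actually proves is weaker and more subtle, and this is what you must replicate: the Gaussian smallness holds only on the \emph{conditional} bad set $Q_R = \{x\in B_R\setminus B_{R/2}: \|x\|\ge 3(\E\|T_{\rhob}^\nu(x)\|+r)\}$, where $\|x\|$ greatly exceeds the $P$-averaged transport image. The bound $\rhob(Q_R)\le \frac{|B_1|R^d}{|\Omega\cap B_r|}e^{-R^2/(24\lambda)}$ is obtained by a \emph{variational first-order perturbation}: one moves a fraction $t$ of the mass from $Q_R$ to $S=\Omega\cap B_r(0)$, computes the competing contributions of the entropy derivative (a $\rhob(Q_R)\log(\cdot)$ term) and the Wasserstein derivative (a strictly negative $-\rhob(Q_R)R^2/12$ term obtained by replacing, for each $\nu$, the optimal coupling on $Q_R$ with a crude coupling to $S$), and uses first-order optimality. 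This step cannot be reproduced by a pointwise PDE analysis of \eqref{densrhob}. Outside $\bigcup_n Q_{2^n}$ one has $\|x\| < 3(\E\|T_{\rhob}^\nu(x)\|+r)$, and then the pushforward $(T_{\rhob}^\nu)_\#\rhob = \nu$ together with Jensen gives the $\frac{6^p}{2}(r^p + \int m_p(\nu)\d P(\nu))$ term; the dyadic sum over $Q_{2^n}$ gives the $\Gamma((d+p)/2)(96\lambda)^{(d+p)/2}$ term. Your triangle-inequality step for introducing $M_p$ also runs into a circularity: bounding the residual $\int\|x-S(x)\|^p\rhob\d x = \lambda^p\int\|\nabla\log\rhob\|^p\rhob$ for $p>2$ already requires knowing $m_p(\rhob)<\infty$ (this is the content of Corollary~\ref{cor:higher_fisher}, proved \emph{after} the present proposition).
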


\begin{proof}
	Fix $r > 0$ s.t.\ $\abs*{\Omega \cap B_r(0)} > 0$ and denote $S \eqset \Omega \cap B_r(0)$. 
	Now let us take $R > 0$ and consider the set 
	\begin{equation}\label{def:Q_R}
		Q_R \eqset \left\{x \in B_R(0) \setminus B_{R/2}(0) : \norm{x} \ge 3 \left(\E \norm*{T_{\rhob}^\nu(x)} + r\right)\right\}
	\end{equation}
	(here and after, expectations are taken  w.r.t.\ $\nu \sim P$). Assume $\rhob(Q_R) > 0$ and define 
	\[
	\rho_t \eqset \rhob + t \left(\frac{\rhob(Q_R)}{\abs{S}} \ind_S - \rhob \ind_{Q_R}\right) \in \P_2(\Omega), \quad 0 \le t \le 1.
	\]
	Then 
	\begin{align*}
		\left.\frac{d}{d t} \Ent_\Omega(\rho_t)\right|_{t = 0^+} 
		&= \frac{\rhob(Q_R)}{\abs{S}} \int_S \log \rhob - \int_{Q_R} \rhob \log \rhob \\
		&\le \rhob(Q_R) \log\left(\frac{\rhob(S)}{\abs{S}}\right) - \rhob(Q_R) \log\left(\frac{\rhob(Q_R)}{\abs{Q_R}}\right) \\
		&\le \rhob(Q_R) \log\left(\frac{\abs{Q_R} \rhob(S)}{\rhob(Q_R) \abs{S}}\right) \\
		&\le \rhob(Q_R) \log\left(\frac{\abs{B_R(0)}}{\rhob(Q_R) \abs{S}}\right) \\
		&= \rhob(Q_R) \log\left(\frac{V_d R^d}{\rhob(Q_R) \abs{S}}\right),
	\end{align*}
	where $V_d \eqset \abs{B_1(0)}$ is the volume of a unit ball in $\R^d$.
	Furthermore, for any $\nu$ we can estimate $W_2^2(\rho_1, \nu)$ using the transport plan
	\[
	\gamma \eqset (\id, T_{\rhob}^\nu)_\# \left(\rhob \ind_{\R^d \setminus Q_R}\right) + \frac{1}{\abs{S}} \ind_S \otimes \left(T_{\rhob}^\nu\right)_\# \left(\rhob \ind_{Q_R}\right) \in \Pi(\rho_1,\nu),
	\]
	which gives us
	\begin{align*}
		W_2^2(\rho_1, \nu) 
		&\le \int_{\R^d \setminus Q_R} \norm*{T_{\rhob}^\nu(x) - x}^2 \rhob + \fint_S \left[\int_{Q_R} \norm*{T_{\rhob}^\nu(x) - y}^2 \rhob(x)\right] \d y \\
		&\le W_2^2(\rho_0, \nu) + \int_{Q_R} \left[\left(r + \norm*{T_{\rhob}^\nu(x)}\right)^2 - \norm*{T_{\rhob}^\nu(x) - x}^2\right] \rhob \\
		&\le W_2^2(\rho_0, \nu) + \int_{Q_R} \left[r^2 + 2 r \norm*{T_{\rhob}^\nu(x)} - \norm{x}^2 + 2 \norm*{T_{\rhob}^\nu(x)}  \norm{x}\right] \rhob.
	\end{align*}
	Then it is easy to see that, due to convexity of $W_2^2(\cdot, \cdot)$,
	\begin{align*}
		\left.\frac{d}{d t} \E W_2^2(\rho_t, \nu)\right|_{t = 0^+} 
		&\le \E W_2^2(\rho_1, \nu) - \E W_2^2(\rho_0, \nu) \\
		&\le \int_{Q_R} \left[r^2 + 2 \left(r + \norm{x}\right) \E \norm*{T_{\rhob}^\nu(x)} - \norm{x}^2\right] \rhob \\
		&= \int_{Q_R} \left[\left(r + \E \norm*{T_{\rhob}^\nu(x)}\right)^2 - \left(\norm{x} - \E \norm*{T_{\rhob}^\nu(x)}\right)^2\right] \rhob \\
		&\le - \frac{1}{3} \int_{Q_R} \norm{x}^2 \rhob
		\le - \frac{\rhob(Q_R) R^2}{12}.
	\end{align*}
	Therefore,
	\[
	\left.\frac{d}{d t} V_{P, \lambda, \Omega}(\rho_t)\right|_{t = 0^+} 
	\le \lambda \rhob(Q_R) \left(\log\left(\frac{V_d R^d}{\rhob(Q_R) \abs{S}}\right) - \frac{R^2}{24 \lambda}\right).
	\]
	On the other hand, by optimality this derivative should be nonnegative, thus
	\begin{equation}\label{eq:Q_R_bound}
		\rhob(Q_R) \le \frac{V_d R^d}{\abs{S}} \exp\left(- \frac{R^2}{24 \lambda}\right).
	\end{equation}
	
	Now we set $R_n = 2^n$ and define $q_n \eqset \rhob(Q_{R_n})$, $n \in \Z$. Note that by the definition~\eqref{def:Q_R} of $Q_R$, if $x \in \Omega \setminus \bigcup_{n \in \Z} Q_{R_n}$, then $\norm{x} < 3 \left(\E \norm*{T_{\rhob}^\nu(x)} + r\right)$. Consequently,
	\begin{equation}\label{eq:p_moment_bound}
		m_p(\rhob) = \int_\Omega \norm{x}^p \rhob 
		\le \int_{\Omega \setminus \bigcup_{n \in \Z} Q_{R_n}} 3^p \left(\E \norm*{T_{\rhob}^\nu} + r\right)^p \rhob + \sum_{n \in \Z} R_n^p q_n.
	\end{equation}
	Using the fact that $(a + b)^p \le 2^{p-1} (a^p + b^p)$, $\left(T_{\rhob}^\nu\right)_\# \rhob = \nu$, and Jensen's inequality, one can bound the first term on the r.h.s.\ as follows:
	\[
	\int_{\Omega \setminus \bigcup_{n \in \Z} Q_{2^n}} 3^p \left(\E \norm*{T_{\rhob}^\nu(x)} + r\right)^p \rhob
	\le \frac{6^p}{2} \left(r^p + \E \int_\Omega \norm*{T_{\rhob}^\nu}^p \rhob\right)
	= \frac{6^p}{2} \left(r^p + \E m_p(\nu)\right).
	\]
	Now let us bound the second term: due to~\eqref{eq:Q_R_bound} we get
	\begin{align*}
		\sum_{n \in \Z} R_n^p q_n 
		&\le \frac{V_d}{\abs{S}} \sum_{n \in \Z} R_n^{d + p} \exp\left(- \frac{R_n^2}{24 \lambda}\right) \\
		&\le \frac{V_d}{\abs{S}} \sum_{n \in \Z} \int_{2^n}^{2^{n+1}} x^{d + p - 1} \exp\left(- \frac{x^2}{96 \lambda}\right) \d x \\
		&= \frac{V_d}{\abs{S}} \int_0^{+\infty} x^{d + p - 1} \exp\left(- \frac{x^2}{96 \lambda}\right) \d x \\
		&= \frac{V_d (96 \lambda)^{(d+p)/2}}{2 \abs{S}} \Gamma\!\left(\frac{d + p}{2}\right).
	\end{align*}
	Combining the above bounds together we obtain
	\[
	m_p(\rhob) \le \frac{6^p}{2} \left(r^p + \E m_p(\nu)\right) + \frac{V_d (96 \lambda)^{(d+p)/2}}{2 \abs{S}} \Gamma\!\left(\frac{d + p}{2}\right),
	\]
	thus the first claim follows.
	
	Finally, in case $\Omega = \R^d$, we can take $r = \frac{\sqrt{96 \lambda}}{6^{p / (p+d)}}$, then using $\abs{S} = V_d r^d$ one obtains
	\begin{align*}
		m_p(\rhob) &\le \frac{6^p}{2} \E m_p(\nu) + \left(6^{d / (p+d)} \sqrt{96 \lambda}\right)^p \frac{1 + \Gamma\!\left(\frac{d + p}{2}\right)}{2} \\
		&\le \frac{6^p}{2} \E m_p(\nu) + (3456 \lambda)^{p/2} \,\Gamma\!\left(\frac{d + p}{2}\right).
	\end{align*}
\end{proof}

\begin{remark}
	Note that \eqref{eq:moment_bound_Rd} (and thus, in some sense, \eqref{eq:moment_bound}) is an interpolation between two bounds. On the one hand, if $\lambda = 0$, then $\rhob$ is a standard Wasserstein barycenter and, due to convexity of $m_p(\cdot)$ along generalized geodesics, one gets the bound
	\[
	m_p(\rhob) \le \int_{\P_2(\R^d)} m_p(\nu) \d P(\nu).
	\]
	On the other hand, if $P$ is concentrated at the measure $\delta_0$, then $\rhob = \mathcal{N}(0, \lambda I)$ by Proposition~\ref{prop:characterization}. In this case, 
	\[
	m_p(\rhob) = \frac{(2 \lambda)^{p/2} \,\Gamma\!\left(\frac{p + d}{2}\right)}{\Gamma\!\left(\frac{d}{2}\right)},
	\]
	which coincides with the second term in the r.h.s.\ of~\eqref{eq:moment_bound_Rd} up to a constant factor to the power $p$ and a factor depending on the dimension.
\end{remark}

\begin{remark}
	Let us indicate now a more elementary approach to obtain moment bounds when $\Omega$ is convex. Let $V\colon \R^d \to \R_+$ be a convex potential such that 
	\[\int_{\P_2(\R^d)} m_V(\nu) \d P(\nu)< +\infty, \text{ where } m_V(\nu) \eqset \int_{\R^d} V(x) \d \nu(x).\]
	On the one hand, thanks to \eqref{gradopticond}, the convexity of $V$ and the fact that ${\nabla \varphi_{\rhob}^{\nu}}_\# \rhob=\nu$, we have:
	\[\int_{\Omega} V(\lambda \nabla \log \rhob(x)+x) \rhob(x) \d x \leq \int_{\P_2(\R^d)} m_V(\nu) \d P(\nu).\]
	On the other hand, again by convexity $V(\lambda \nabla \log \rhob(x)+x) \rhob(x) \geq V(x) \rhob(x) +\lambda \nabla V(x) \cdot \nabla \rhob(x)$. Integrating by parts (which can be justified if $V$ is $C^{1,1}$ and using Lemma~\ref{fisherlemma}), denoting by $n$ the outward normal to $\Omega$ on $\partial \Omega$, we thus get
	\begin{equation}\label{ippcontmv}
		\int_{\Omega} (V-\lambda \Delta V) \rhob \leq \int_{\P_2(\R^d)} m_V(\nu) \d P(\nu) -\lambda \int_{\partial \Omega} \partial_n V \rhob.
	\end{equation}
	Assuming \eqref{finitepmoment} and choosing $V(x)= \norm{x-x_0}^p$ (actually, some suitable $C^{1,1}$ approximations of $V$) with $p\geq 2$ in \eqref{ippcontmv} with $x_0 \in \Omega$, observing that $\partial_n V \geq 0$ on $\partial \Omega$ since $\Omega$ is convex, we obtain the bound
	\[\int_{\Omega} \Bigl(\norm{x-x_0}^p -\lambda p(p+d-2)\ \norm{x-x_0}^{p-2}\Bigr) \rhob(x) \d x \leq \int_{\Omega} \int_{\R^d} \norm{x-x_0}^p \d \nu(x) \d P(\nu).\]
	In particular, when $\Omega=\R^d$ or, more generally, when $\Omega$ is  convex and contains $0$, we have
	\[m_2(\rhob) \leq 2 \lambda d+ \int_{\P_2(\R^d)} m_2(\nu) \d P(\nu),\]
	and for higher moments 
	\[m_p(\rhob) \leq \lambda p(p+d-2) m_{p-2}(\rhob) + \int_{\P_2(\R^d)} m_p(\nu) \d P(\nu).\]
	Note finally that when choosing $V$ linear, the two convexity inequalities we used above are equalities, yielding 
	\[\int_{\Omega} x \rhob(x) \d x+ \lambda \int_{\partial \Omega} n \rhob= \int_{\P_2(\R^d)} \int_{\R^d} x \d \nu(x) \d P(\nu).\]
\end{remark}

\begin{corollary}\label{cor:higher_fisher}
	Under assumptions of Proposition~\ref{higherfisher} it holds that $\rhob^{1/p} \in W^{1,p}(\Omega)$.
	In particular, if $p > d$, then $\rhob \in L^{\infty}(\Omega) \cap C^{0, 1-d/p}(\Omega)$.
\end{corollary}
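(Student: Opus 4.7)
The approach is to upgrade the weighted Fisher-information bound of Lemma~\ref{fisherlemma} to a weighted $L^p$ bound on $\nabla \log \rhob$, interpret it through the chain rule to obtain $\rhob^{1/p} \in W^{1,p}(\Omega)$, and finally apply Morrey's embedding when $p > d$. Everything is routine once the moment bound of Proposition~\ref{higherfisher} is in hand; that proposition is doing the heavy lifting.

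The starting point is to rewrite \eqref{gradopticond} as
\[
\lambda \nabla \log \rhob(x) = -\int_{\P_2(\R^d)} \bigl(x - \nabla \varphi_{\rhob}^\nu(x)\bigr) \d P(\nu).
\]
Since $p \geq 1$, Jensen's inequality applied to the convex function $\norm{\cdot}^p$ and the probability measure $P$ gives
\[
\lambda^p \norm{\nabla \log \rhob(x)}^p \leq \int_{\P_2(\R^d)} \norm*{x - \nabla \varphi_{\rhob}^\nu(x)}^p \d P(\nu).
\]
Integrating against $\rhob$, swapping the integrals by Fubini, using the pushforward relation $(\nabla \varphi_{\rhob}^\nu)_\# \rhob = \nu$, and the elementary bound $\norm{a-b}^p \leq 2^{p-1}(\norm{a}^p + \norm{b}^p)$, I obtain
\[
\lambda^p \int_\Omega \norm{\nabla \log \rhob}^p \rhob \leq 2^{p-1}\Bigl(m_p(\rhob) + \int_{\P_2(\R^d)} m_p(\nu) \d P(\nu)\Bigr).
\]
Both terms are finite: $m_p(\rhob) < +\infty$ is precisely the conclusion of Proposition~\ref{higherfisher} under hypothesis \eqref{finitepmoment}, and the second term is assumed finite.

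Since $\rhob > 0$ a.e.\ on $\Omega$ by Lemma~\ref{rhobpos}, the chain rule yields $\nabla \rhob^{1/p} = p^{-1} \rhob^{1/p} \nabla \log \rhob$, so that
\[
\int_\Omega \norm{\nabla \rhob^{1/p}}^p = p^{-p} \int_\Omega \rhob \norm{\nabla \log \rhob}^p < +\infty.
\]
Combined with $\int_\Omega (\rhob^{1/p})^p = 1$, this gives $\rhob^{1/p} \in W^{1,p}(\Omega)$.

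For the second assertion, when $p > d$, Morrey's embedding yields $\rhob^{1/p} \in L^\infty(\Omega) \cap C^{0, 1-d/p}(\Omega)$ (globally for $\Omega = \R^d$ or $\Omega$ Lipschitz, otherwise locally). Since $\rhob^{1/p}$ is bounded, raising to the $p$-th power preserves boundedness directly, and Hölder continuity via $\abs{a^p - b^p} \leq p \max(a,b)^{p-1} \abs{a-b}$ for nonnegative bounded $a,b$; hence $\rhob \in L^\infty(\Omega) \cap C^{0,1-d/p}(\Omega)$. The only conceptual point is the Jensen step combined with the pushforward identity, which transfers moment control on the target measures to $L^p$ control on $\nabla \log \rhob$ weighted by $\rhob$; beyond that, no obstacle arises.
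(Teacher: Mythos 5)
Your proof is correct and takes essentially the same route as the paper: the paper likewise reuses the argument of Lemma~\ref{fisherlemma} (optimality condition \eqref{gradopticond}, Jensen, Fubini, the pushforward identity) together with the crude bound $\norm{a-b}^p \le 2^{p-1}(\norm{a}^p+\norm{b}^p)$ and the moment bound $m_p(\rhob)<\infty$ from Proposition~\ref{higherfisher}, arriving at the same weighted $L^p$ estimate on $\nabla\log\rhob$. You spell out the $p>d$ conclusion (Morrey plus transferring $L^\infty$/H\"older from $\rhob^{1/p}$ to $\rhob$) slightly more carefully than the paper, which leaves that step implicit.
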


\begin{proof}
	Once we have a bound on $m_p(\rhob)$, the fact that $\rhob^{1/p}$ is $W^{1,p}$ can be proved as for Lemma \ref{fisherlemma}. Indeed, by the same arguments (together with the crude bound $\norm*{\nabla \varphi_{\rhob}^\nu(x) - x}^p \le 2^{p-1} \left(\norm*{\nabla \varphi_{\rhob}^\nu(x)}^p + \norm{x}^p\right)$) we arrive at
	\[p^p
	\norm*{\nabla \rhob^{1/p}}_{L^p(\Omega)}^p
	= \int_{\Omega} \frac{\norm*{\nabla \rhob}^p}{\rhob^{p-1}} 
	\le \frac{2^{p-1}}{\lambda^p} \left(\int_{\P_2(\R^d)} m_p(\nu) \d P(\nu) + m_p(\rhob)\right).
	\]
\end{proof}

\subsection{Stability}

Following \cite{gouic2015existence}, let us define the $p$-Wasserstein metric between measures on $\P_p(\R^d)$:
\begin{equation}
	\mathcal{W}_p^p(P, Q) \eqset \inf_{\varGamma \in \Pi(P, Q)} \int_{\P_p(\R^d) \times \P_p(\R^d)} W_p^p(\mu, \nu) \d \varGamma(\mu, \nu).
\end{equation}

\begin{lemma}[Stability]\label{lem:stability}
	Take $p \ge 2$ and let $\{P_n\}_{n \ge 1} \subset \P_p\left(\P_p(\R^d)\right)$, $P \in \P_p\left(\P_p(\R^d)\right)$ be s.t.\ $\mathcal{W}_p(P_n, P) \to 0$. Then for $\rhob_n = \bary_{\lambda, \Omega}(P_n)$ and $\rhob = \bary_{\lambda, \Omega}(P)$ it holds that
	\begin{align}
		W_p(\rhob_n, \rhob) &\longrightarrow 0, \label{eq:Wass_stability} \\
		\rhob_n^{1/p} &\xrightarrow{W^{1,p}(\Omega)} \rhob^{1/p}, \label{eq:Wp_stability} \\
		\log \rhob_n &\xrightarrow{W^{1,q}_{\loc}(\Omega)} \log \rhob, \quad \forall\; 1 \le q < \infty. \label{eq:loc_Sob_stability}
	\end{align}
\end{lemma}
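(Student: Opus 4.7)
My plan is to first prove narrow convergence $\rhob_n \to \rhob$ through a standard compactness-and-uniqueness argument, then upgrade it to $W_p$ convergence, and finally obtain the two Sobolev-type convergences from the characterization~\eqref{densrhob} combined with the a priori bounds of Lemma~\ref{fisherlemma}, Proposition~\ref{higherfisher}, and Corollary~\ref{cor:higher_fisher}. The hypothesis $\mathcal{W}_p(P_n, P) \to 0$ with $p \ge 2$ guarantees $\int f \d P_n \to \int f \d P$ for every continuous $f \colon \P_p(\R^d) \to \R$ of at most $p$-growth; in particular for $f(\nu) = W_2^2(\mu, \nu)$ with any fixed $\mu \in \P_2(\R^d)$.

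\textbf{Identification of the limit and convergence of energies.} Applying Proposition~\ref{higherfisher} to each $P_n$, the uniform bound $\sup_n \int m_p \d P_n < \infty$ gives $\sup_n m_p(\rhob_n) < \infty$, hence $(\rhob_n)$ is tight. Along any narrowly convergent subsequence $\rhob_{n_k} \to \tilde\rho$, joint lower semicontinuity of $(\rho, Q) \mapsto V_{Q, \lambda, \Omega}(\rho)$ yields $\liminf_k V_{P_{n_k}, \lambda, \Omega}(\rhob_{n_k}) \ge V_{P, \lambda, \Omega}(\tilde\rho)$, while optimality of $\rhob_n$ combined with the continuity $V_{P_n, \lambda, \Omega}(\rhob) \to V_{P, \lambda, \Omega}(\rhob)$ gives $\limsup_k V_{P_{n_k}, \lambda, \Omega}(\rhob_{n_k}) \le V_{P, \lambda, \Omega}(\rhob)$. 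Uniqueness of the entropic barycenter forces $\tilde\rho = \rhob$ and convergence of the whole sequence. Moreover $V_{P_n, \lambda, \Omega}(\rhob_n) \to V_{P, \lambda, \Omega}(\rhob)$, and since both terms are separately lsc and bounded below, a standard argument gives the separate convergences $\Ent_\Omega(\rhob_n) \to \Ent_\Omega(\rhob)$ and $\int W_2^2(\rhob_n, \nu) \d P_n(\nu) \to \int W_2^2(\rhob, \nu) \d P(\nu)$.

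\textbf{Upgrading to $W_p$ and to the two Sobolev convergences.} The tail bound~\eqref{eq:Q_R_bound} in the proof of Proposition~\ref{higherfisher} is uniform in $n$ and Gaussian, so $\{\norm{x}^p\}$ is uniformly $\rhob_n$-integrable; narrow convergence then upgrades to $m_p(\rhob_n) \to m_p(\rhob)$ and hence to~\eqref{eq:Wass_stability}. For~\eqref{eq:Wp_stability}, Corollary~\ref{cor:higher_fisher} supplies a uniform bound on $\norm{\rhob_n^{1/p}}_{W^{1,p}(\Omega)}$; weak $W^{1,p}$ compactness plus the already established narrow convergence identifies the weak limit as $\rhob^{1/p}$, and $\norm{\rhob_n^{1/p}}_{L^p} = 1 = \norm{\rhob^{1/p}}_{L^p}$ together with uniform convexity of $L^p$ yield strong $L^p$ convergence. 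Strong convergence of $\nabla \rhob_n^{1/p}$ then follows from convergence of the $L^p$ norm of the gradient, itself a consequence of the convergence of both $\int m_p \d P_n$ and $m_p(\rhob_n)$ (which controls the right-hand side of the Fisher-type inequality in Corollary~\ref{cor:higher_fisher}) combined with the weak-lsc lower bound. For~\eqref{eq:loc_Sob_stability}, from~\eqref{densrhob} the function $\lambda \log \rhob_n$ equals $-\tfrac{1}{2}\norm{\cdot}^2$ plus the $P_n$-average of the Brenier potentials $\varphi_{\rhob_n}^\nu$; Lemma~\ref{harnacklem} provides uniform local Lipschitz bounds on these potentials in terms of $\sqrt{m_2(\nu)}$, and the stability of Brenier's map under $W_p$-convergence of both marginals ($\rhob_n \to \rhob$ in $W_p$ with $\rhob$ absolutely continuous and strictly positive) gives a.e.\ pointwise convergence $\nabla \varphi_{\rhob_n}^\nu \to \nabla \varphi_{\rhob}^\nu$. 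A dominated convergence argument under the $P_n$-integral in~\eqref{gradopticond}, leveraging the $\mathcal{W}_p$-domination, then yields the required $W^{1,q}_\loc$ convergence for every $q < \infty$.

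\textbf{Main obstacles.} The two delicate points are the strong (and not merely weak) $L^p$ convergence of $\nabla \rhob_n^{1/p}$, which requires convergence of the full Fisher-type $L^p$ norm beyond the one-sided bound of Corollary~\ref{cor:higher_fisher}, and the simultaneous passage to the limit in $(n, \nu)$ inside the $P_n$-integral in~\eqref{gradopticond}, which must combine classical optimal transport stability with the uniform local estimates of Lemma~\ref{harnacklem} and with $\mathcal{W}_p$-domination of $P_n \to P$.
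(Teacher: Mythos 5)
Your overall architecture (narrow limit by compactness and uniqueness, then upgrade to $W_p$, then use the PDE characterization for the Sobolev convergences) is sensible, and the first step (identification of the limit and convergence of the two energy terms) works; the paper simply delegates it to a quoted theorem. But two of the three convergences have genuine gaps in your argument.

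First, for the upgrade to $W_p$: you write that the tail bound \eqref{eq:Q_R_bound} is ``uniform in $n$ and Gaussian, so $\{\|x\|^p\}$ is uniformly $\rhob_n$-integrable.'' That misreads the estimate. The bound \eqref{eq:Q_R_bound} only controls $\rhob_n(Q_R)$; the full tail moment estimate \eqref{eq:tail_moment} has \emph{two} pieces, a Gaussian one coming from $\bigcup_n Q_{R_n}$ and a second piece on the complement of that union, where one only knows $\|x\| \lesssim \E\|T_{\rhob_n}^\nu(x)\| + r$. That second piece is of the form
$\int_{\P_p}\int_{\{\|x\|\ge R\}}(1+\|\nabla\varphi_{\rhob_n}^\nu\|^p)\rhob_n\,\diff P_n(\nu)$,
is \emph{not} exponentially small in $R$, and does not come with a bound uniform in $n$ for free. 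The paper handles it by proving upper semicontinuity of $(\rho, P)\mapsto \int\int_{G}(1+\|\nabla\varphi_\rho^\nu\|^p)\rho\,\diff P(\nu)$ (via \cite[Thm.~3.7]{kroshnin2018frechet} and stability of optimal plans) and then letting $R\to\infty$. Without that u.s.c. step, uniform integrability is not established.

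Second, and more seriously, your route to \eqref{eq:Wp_stability} does not close. You claim convergence of $\|\nabla\rhob_n^{1/p}\|_{L^p}$ to $\|\nabla\rhob^{1/p}\|_{L^p}$ from the convergence of the right-hand side of the Fisher-type inequality of Corollary~\ref{cor:higher_fisher} together with weak lower semicontinuity. But that inequality is a strict upper bound (it uses the crude estimate $\|\nabla\varphi - x\|^p \le 2^{p-1}(\|\nabla\varphi\|^p + \|x\|^p)$), so ``$\liminf \ge \|\nabla\rhob^{1/p}\|_{L^p}^p$'' plus ``$\limsup \le \text{(RHS at the limit)}$'' leaves a gap, since the limit RHS is in general strictly larger than $\|\nabla\rhob^{1/p}\|_{L^p}^p$. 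The paper avoids this entirely: it first proves \eqref{eq:loc_Sob_stability}, and then obtains \eqref{eq:Wp_stability} by splitting $\int_\Omega \|\nabla\rhob_n^{1/p}\|^p$ into a compact set $U$ (handled by the local $W^{1,q}$ convergence) and $\Omega\setminus U$ (handled by the same u.s.c. tail estimate as above, letting $U\uparrow\Omega$). That decomposition is what is missing from your argument.

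Third, for \eqref{eq:loc_Sob_stability}: you propose passing to the limit in $\nabla\varphi_{\rhob_n}^\nu \to \nabla\varphi_\rhob^\nu$ ($\nu$ fixed) and then dominating under the $P_n$-integral, but $P_n$ itself varies with $n$, so this is not a dominated-convergence situation; you flag this but do not resolve it. The paper's route is cleaner and sidesteps the double limit: using Lemma~\ref{harnacklem}, the averaged potential $\bar\varphi_n = \lambda\log\rhob_n + \tfrac{1}{2}\|\cdot\|^2 = \int\varphi_{\rhob_n}^\nu\,\diff P_n(\nu)$ is itself uniformly Lipschitz on any $U\subset\subset\Omega$ (the Lipschitz bound of the integrand is controlled by $\sqrt{m_2(\nu)}$ and $\sup_n\int m_2\,\diff P_n <\infty$), Arzel\`a--Ascoli gives local uniform convergence of $\bar\varphi_n$, convexity gives a.e. gradient convergence, and DCT on the fixed domain $U$ concludes. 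You should bound and pass to the limit on the averaged convex potential directly, rather than on each $\varphi_{\rhob_n}^\nu$ under a moving integral.
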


\begin{proof}
	\textbf{Proof of~\eqref{eq:Wass_stability}.}
	Note that since $W_2(\cdot, \cdot) \le W_p(\cdot, \cdot)$ and $\mathcal{W}_2(\cdot, \cdot) \le \mathcal{W}_p(\cdot, \cdot)$, one has $\mathcal{W}_2(P_n, P) \to 0$.
	According to the proof of Proposition~\ref{existbar}, $m_2(\rhob_n)$ ($m_2(\rhob)$) are uniformly bounded, thus by~\eqref{bdentm} $\Ent_\Omega(\rhob_n)$ ($\Ent_\Omega(\rhob)$) are bounded from below. Moreover, replacement of $\Omega$ with its closure $\Omb$ does not change an entropic-Wasserstein barycenter.
	Then Theorem~5.5 from \cite{kroshnin2018frechet} implies that $W_2(\rhob_n, \rhob) \to 0$.
	
	Arguing in the same way as in the proof of Proposition~\ref{higherfisher}, one can show that for any $R > 0$
	\begin{multline}\label{eq:tail_moment}
		\int_{\{\norm{x} \ge R\}} \norm{x}^p \rhob_n
		\le C \biggl[\int_{\P_p(\R^d)} \int_{\{\norm{x} \ge R\}} \left(1 + \norm[\big]{\nabla \varphi_{\rhob_n}^\nu}^p\right) \rhob_n \d P_n(\nu) \\
		+ \int_R^{+\infty} x^{d + p - 1} \exp\left(- \frac{x^2}{96 \lambda}\right) \d x\biggr], 
	\end{multline}
	where the constant $C$ depends solely on $\Omega$, $\lambda$, $p$, and $d$.
	
	To prove that $W_p(\rhob_n, \rhob) \to 0$, we also need the following result on continuity of optimal transport plans: once $W_2(\rho_n, \rho) \to 0$, $W_p(\nu_n, \nu) \to 0$, and there exists a unique optimal transport plan $\gamma_\rho^\nu$ from $\rho$ to $\nu$ for the quadratic cost function, one has
	\[
	J(\gamma_{\rho_n}^{\nu_n}, \gamma_\rho^\nu) \to 0,
	\]
	where $J(\cdot, \cdot)$ is the optimal transport cost for the cost function 
	\[
	c\bigl((x_1, y_1), (x_2, y_2)\bigr) = \norm{x_1 - x_2}^2 + \norm{y_1 - y_2}^p, \quad x_i, y_i \in \R^d,
	\]
	and $\gamma_{\rho_n}^{\nu_n}$ is any optimal transport plan from $\rho_n$ to $\nu_n$ for the quadratic cost function. Indeed,
	\[
	\int \norm{x - y}^2 \d \gamma_{\rho_n}^{\nu_n} = W_2^2(\rho_n, \nu_n) 
	\to W_2^2(\rho, \nu) = \int \norm{x - y}^2 \d \gamma_{\rho}^{\nu},
	\]
	then $\gamma_{\rho_n}^{\nu_n} \rightharpoonup \gamma_\rho^\nu$ due to Prokhorov's theorem and uniqueness of the optimal transport plan; moreover,
	\begin{multline*}
		\int_{\R^d \times \R^d} \left(\norm{x}^2 + \norm{y}^p\right) \d \gamma_{\rho_n}^{\nu_n}(x, y) = m_2(\rho_n) + m_p(\nu_n) \\
		\to m_2(\rho) + m_p(\nu) = \int_{\R^d \times \R^d} \left(\norm{x}^2 + \norm{y}^p\right) \d \gamma_{\rho}^{\nu}(x, y),
	\end{multline*}
	thus the convergence follows from \cite[Theorem~3.7]{kroshnin2018frechet}.
	Further, using \cite[Theorem~3.7]{kroshnin2018frechet} again, it is easy to see that for any closed set $G \subset \R^d$ the function
	\[
	(\rho, \nu) \mapsto \int_{G} \left(1 + \norm*{\nabla \varphi_{\rho}^\nu}^p\right) \rho = \int_{G \times \R^d} \left(1 + \norm{y}^p\right) \d \gamma_\rho^\nu(x, y)
	\]
	is upper-semicontinuous w.r.t.\ convergence in $W_2$ distance (for $\rho$) and $W_p$ distance (for $\nu$), as well as its average w.r.t.\ a measure on $\P_p(\R^d)$:
	\[
	(\rho, P) \mapsto \int_{\P_p(\R^d)} \int_{G} \left(1 + \norm*{\nabla \varphi_{\rho}^\nu}^p\right) \rho \d P(\nu).
	\]
	Hence for all $R > 0$ one obtains
	\begin{multline*}
		\limsup \int_{\P_p(\R^d)} \int_{\{\norm{x} \ge R\}} \left(1 + \norm*{\nabla \varphi_{\rhob_n}^\nu}^p\right) \rhob_n \d P_n(\nu) \\ 
		\le \int_{\P_p(\R^d)} \int_{\{\norm{x} \ge R\}} \left(1 + \norm*{\nabla \varphi_{\rhob}^\nu}^p\right) \rhob \d P(\nu).
	\end{multline*}
	Using this together with~\eqref{eq:tail_moment}, we get that
	\begin{multline*}
		\limsup \int_{\{\norm{x} \ge R\}} \norm{x}^p \rhob_n 
		\le C \biggl[\int_{\{\norm{x} \ge R\}} \left(\int_{\P_p(\R^d)} \left(1 + \norm*{\nabla \varphi_{\rhob}^\nu}^p\right) \d P(\nu)\right) \rhob \\
		+ \int_R^{+\infty} x^{d + p - 1} \exp\left(- \frac{x^2}{96 \lambda}\right) \d x\biggr] \to 0 \text{ as } R \to 0.
	\end{multline*}
	Thus the measures $\norm{\cdot}^p \rhob_n$ are uniformly integrable, and by the criterion of convergence in a Wasserstein space (see e.g.\ \cite[Theorem~6.9]{Villani1} or \cite[Theorem~3.7]{kroshnin2018frechet}), we deduce  that $W_p(\rhob_n, \rhob) \to 0$.
	
	\textbf{Proof of~\eqref{eq:Wp_stability} and~\eqref{eq:loc_Sob_stability}.}
	Fix an arbitrary open set $U \subset\subset \Omega$.
	By Lemma~\ref{harnacklem} 
	\begin{align*}
		\lip\left(\varphi_{\rhob_n}^\nu\big|_U\right)
		&\le \frac{C}{\inf\limits_{x \in U_{3 r / 4}} \rhob_n\left(B_{r / 4}(x)\right)} \left(\iom \norm{\nabla \varphi_{\rhob_n}^\nu}^2 \rhob_n\right)^{1/2} \\
		&= \frac{C}{\inf\limits_{x \in U_{3 r / 4}} \rhob_n\left(B_{r / 4}(x)\right)} \sqrt{m_2(\nu)},
	\end{align*}
	where $r = d(U, \partial \Omega)$. 
	Since $\rhob_n \rightharpoonup \rhob$ and $\rhob > 0$ on $\Omega$, we have $\inf\limits_{x \in U_{3 r / 4}} \rhob_n\left(B_{r / 4}(x)\right) \ge c > 0$ for any $n$.
	Therefore, the  functions
	\[
	\bar\varphi_n = \lambda \log \rhob_n + \frac{\norm{\cdot}^2}{2}
	= \int_{\P_2(\R^d)} \varphi_{\rhob_n}^\nu \d P_n(\nu)
	\]
	are uniformly Lipschitz continuous on $U$ for all $n$ since $\int_{\P_2(\R^d)} m_2(\nu) \d P_n(\nu)$ are uniformly bounded.
	Furthermore, as $\rhob_n \rightharpoonup \rhob>0$, $\bar\varphi_n$ are also uniformly bounded on $U$. Then, by the Arzel{\`a}--Ascoli theorem, $\bar\varphi_n \xrightarrow{C(U)} \bar\varphi$, and we deduce from weak convergence that $\bar\varphi = \lambda  \log \rhob + \frac{\norm{\cdot}^2}{2}$. Moreover, every $\bar\varphi_n$ is convex, thus $\nabla \bar\varphi_n \to \nabla \bar\varphi$ a.e.\ on $U$.
	Hence, by Lebesgue's dominated convergence theorem, we get $\bar\varphi_n \xrightarrow{W^{1,q}(U)} \bar\varphi$ for any $1 \le q < \infty$ and thus \eqref{eq:loc_Sob_stability}.
	
	Further, using~\eqref{gradopticond}, we get
	\[
	\int_{\Omega \setminus U} \norm*{\nabla \rhob_n^{1/p}}^p 
	= \frac{1}{p^p} \int_{\Omega \setminus U} \norm*{\nabla \log \rhob_n}^p \rhob_n
	\le \frac{2^{p-1}}{(p \lambda)^p} \int_{\Omega \setminus U} \left(\norm*{\nabla \bar\varphi_n}^p + \norm{x}^p\right) \rhob_n.
	\]
	Since the functions $\rho \mapsto \int_{\Omega \setminus U} \norm{x}^p \rho$ and $(\rho, P) \mapsto \int_{\P_p(\R^d)} \int_{\Omega \setminus U} \norm*{\nabla \varphi_{\rho}^\nu}^p \rho \d P(\nu)$ are u.s.c., we obtain that
	\[
	\limsup \int_{\Omega \setminus U} \norm*{\nabla \rhob_n^{1/p}}^p \to 0 \quad\text{as}\quad U \to \Omega
	\]
	(e.g.\ in a sense that $\rhob(\Omega \setminus U) \to 0$).
	Finally, this together with~\eqref{eq:loc_Sob_stability} yields that $\rhob_n^{1/p} \xrightarrow{W^{1,p}(\Omega)} \rhob^{1/p}$.
\end{proof}

In particular, the previous lemma shows that one can approximate the barycenter $\rhob$ by approximating $P$ with discrete measures supported on some dense set of measures, e.g.\ discrete or having smooth densities.
As another corollary of Lemma~\ref{lem:stability}, in section~\ref{sec-CLT} we will obtain a law of large numbers for entropic-Wasserstein barycenters.

\subsection{A maximum principle} 

\begin{proposition}\label{mpprop}
	Assume that $\Omega$ is convex and $P\bigl(\{\nu(\Omega) = 1\}\bigr) = 1$, and let $\rhob \eqset \bary_{\lambda, \Omega}(P)$ be its entropic barycenter. Then 
	\[
	\norm{\rhob}_{L^{\infty}(\R^d)} \le \left(\int_{\P_2(\R^d)} \norm{\nu}_{L^{\infty}(\R^d)}^{-1/d} \d P(\nu) \right)^{-d} .
	\]
\end{proposition}

\begin{proof}
	We first prove the result in the simple case where $P$ is supported by finitely many measures and then proceed by approximation thanks to the stability Lemma \ref{lem:stability} (more precisely, its corollary Theorem~\ref{thm:LLN}).
	\smallskip
	
	\textbf{Step 1: the case of finitely many measures.}
	
	Fix a compact convex set $K \subset \Omega$ with nonempty interior. Assume that $P = \sum_{i = 1}^N p_i \delta_{\nu_i}$, where each $\nu_i$ is supported in $K$ and has a $C^{0, \alpha}$, bounded away from $0$ density on $K$. Since $K$ is bounded, all $\varphi_{\rhob}^{\nu_i}$ are Lipschitz, so we can take the continuous version of $\rhob$ on $\Omb$. 
	Now fix an arbitrary $x \in \Omb \setminus K$. Since $\rhob > 0$ on $\Omega$ and $\left(\nabla \varphi_{\rhob}^{\nu_i}\right)_\# \rhob = \nu_i$ for all $i$, there are subgradients $\nabla \varphi_{\rhob}^{\nu_i}(x) \in K$. Let $y = \sum_{i = 1}^N p_i \nabla \varphi_{\rhob}^{\nu_i}(x) \in K$, $v = \frac{y - x}{\norm{y - x}}$, then thanks to \eqref{densrhob}
	\[
	\partial_{v} \log \rhob(x) \ge \frac{1}{\lambda} \langle y - x, v \rangle = \frac{1}{\lambda} \norm{y - x} > 0,
	\]
	therefore $x$ cannot be a maximum point of $\rhob$, and $\rhob$ actually attains its maximum on $K$.
	
	Further, since $\log(\rhob) \in W^{1, \infty}_\loc(\Omega)$, the regularity result of Cordero-Erausquin and Figalli \cite{CorderoFigalli} yields that $\varphi_{\rhob}^{\nu_i}$ is in fact $C^{2, \alpha}_{\loc}$. Then at its maximum point $x \in \Omega$ we should have, on the one hand
	\[
	\sum_{i=1}^N p_i D^2 \varphi_{\rhob}^{\nu_i}(x) \leq I .
	\]
	On the other hand, using the Monge--Amp\`ere equation $\rhob = \det\left(D^2 \varphi_{\rhob}^{\nu_i}\right) \nu_i \left(\nabla \varphi_{\rhob}^{\nu_i}\right)$ (see also \eqref{eq:MAclassic}), we get
	\[
	\rhob(x) \leq \norm{\nu_i}_{L^{\infty}(\R^d)} \det\left(D^2 \varphi_{\rhob}^{\nu_i}(x)\right), \quad i = 1, \ldots, N.
	\]
	So, using the concavity of $\det(\cdot)^{1/d}$ over symmetric positive semi-definite matrices, we obtain 
	\begin{align*} 
		\sum_{i = 1}^N p_i \left(\frac{\rhob(x)}{\norm{\nu_i}_{L^{\infty}(\R^d)}}\right)^{1/d} &\le \sum_{i = 1}^{N} p_i \det\left(D^2 \varphi_{\rhob}^{\nu_i}(x)\right)^{1/d} \\
		&\le \det\left(\sum_{i=1}^{N} p_i D^2 \varphi_{\rhob}^{\nu_i}(x)\right)^{1/d} \le 1,
	\end{align*}
	what gives 
	\[
	\rhob \le \left(\sum_{i = 1}^N p_i \norm{\nu_i}_{L^{\infty}(\R^d)}^{-1/d}\right)^{-d} .
	\]
	
	Of course, the requirement that $\nu_i$ is bounded away from $0$ is just here to justify twice differentiability of $\varphi_{\rhob}^{\nu_i}$, if we drop this assumption replacing $\nu_i$ by $\nu_i^n = (1 - \frac{1}{n}) \nu_i + \frac{1}{n |K|}$, using Lemma~\ref{lem:stability}, we get the same conclusion by letting $n \to \infty$. In a similar way, H\"{o}lder regularity of the $\nu_i$'s can also be removed by suitably mollifying these measures and arguing by stability again. 
	Finally, if $P = \sum_{i=1}^N p_i \delta_{\nu_i}$ with $m_2(\nu_i) < +\infty$, we can find an increasing sequence of compact convex sets $K_n \subset \Omega$, such that for every $n \in \N$
	\[
	\max_{i=1, \ldots, N} \int_{\R^d \setminus K_n} \left(1 + \norm{x}^2\right) \d \nu_i(x) \leq \frac{1}{n}.
	\]
	Set
	\[
	\nu_i^n \eqset \frac{\nu_i \ind_{K_n}}{\nu_i(K_n)} \le \frac{n}{n - 1} \nu_i, \quad 
	P_n \eqset \sum_{i=1}^N p_i \delta_{\nu_i^n},
	\]
	then $\rhob_n \eqset \bary_{\lambda, \Omega}(P_n)$ is bounded with 
	
	\[
	\rhob_n \le \frac{n}{n-1} \left(\sum_{i = 1}^N p_i \norm{\nu_i}_{L^{\infty}(\R^d)}^{-1/d}\right)^{-d}.
	\]
	Since $W_2(\nu_i^n, \nu_i) \to 0$ for all $1 \le i \le N$, we have $\mathcal{W}_2^2(P_n, P) \to 0$, thus stability enables us to conclude that 
	
	\[
	\rhob \leq \left(\sum_{i = 1}^N p_i \norm{\nu_i}_{L^{\infty}(\R^d)}^{-1/d}\right)^{-d} .
	\]
	
	\smallskip
	
	\textbf{Step 2: the general case.}
	
	We now consider the case of a general Borel probability $P$ on $\P_2(\R^d)$ satisfying \eqref{finitemomentP} 
	and concentrated on measures giving full mass to $\Omega$.
	Let $\nu_1, \nu_2, \dots$ be i.i.d.\ random measures drawn from $P$. Then, by Theorem~\ref{thm:LLN}, the empirical barycenters $\rhob_n \eqset \bary_{\lambda, \Omega}(P_n)$, where $P_n \eqset \frac{1}{n} \sum_{i = 1}^n \delta_{\nu_i}$ is the empirical measure, a.s.\ converge to $\rhob$ in $2$-Wasserstein distance. 
	Since 
	\[
	\frac{1}{n} \sum_{i = 1}^n \norm{\nu_i}_{L^{\infty}(\R^d)}^{-1/d} \to \int_{\P_2(\R^d)} \norm{\nu}_{L^{\infty}(\R^d)}^{-1/d} \d P(\nu) \quad\text{a.s.}
	\]
	by the strong law of large numbers, we conclude using Step~1.
\end{proof}

\begin{remark}
	If, under the assumptions of the above proposition, 
	\[
	P\left(\{\nu \in L^{\infty}(\R^d), \; \nu \leq C\}\right) = \alpha > 0,
	\] 
	then it gives
	\[
	\norm{\rhob}_{L^{\infty}(\R^d)} \le \frac{C}{\alpha^d}.
	\]
	The same bound was obtained in Theorem~6.1 from \cite{KimPass} for $2$-Wasserstein barycenters on Riemannian manifolds.
\end{remark}

The following simple example shows that convexity of $\Omega$ is essential for the maximum principle (even if $P$-a.e.\ measure $\nu$ is concentrated on $\Omega$).

\begin{example}
	Consider the one-dimensional case where $\Omega = [-8,-4] \cup [-1,1] \cup [4,8]$. Let $P = \frac{1}{2} \delta_{\nu_-} + \frac{1}{2} \delta_{\nu_+}$, $\nu_- = \frac{1}{4} \ind_{(-8,-4)}$, $\nu_+ = \frac{1}{4} \ind_{(4,8)}$. First, we take $\lambda = 0$, thus $\rhob_0 \eqset \bary_{\Omega,0}(P)$ is an ordinary Wasserstein barycenter (constrained to be supported on $\Omega$). It is easy to see that $\rhob_0$ is actually supported on $[-1,1]$, so $\norm{\rhob_0}_{L^\infty(\Omega)} \ge \frac{1}{2}$ while $\norm{\nu_-}_{L^\infty(\Omega)} = \norm{\nu_+}_{L^\infty(\Omega)} = \frac{1}{4}$.
	Now we consider $\rhob_\lambda \eqset \bary_{\Omega,\lambda}(P)$ and let $\lambda \to 0$. By compactness, we readily get that $\rhob_\lambda \rightharpoonup \rhob_0$, so, for $\lambda$ small enough, we have $\norm{\rhob_\lambda}_{L^\infty(\Omega)} > \frac{1}{4}$.
	Finally, by rescaling, one can construct examples violating the maximum principle for any $\lambda > 0$.
\end{example}
%\label{sec-prop}
\section{Higher regularity}\label{sec-reg}

\subsection{The bounded case}
The theory developed so far has needed very mild assumptions on $\Omega$. To deduce higher regularity (up to the boundary) of the Kantorovich potentials and the barycenter we need to impose more conditions on the domain. 

Suppose that $P$ is  concentrated on sufficiently regular probability measures supported on a closed ball of radius $R>0$, $\bar B \eqset \Omb = \bar B_R(0)$, more precisely, assume that for some $\alpha \in (0,1), k \in \N^*$ and $C > 0 $ 
\begin{align}\label{ass_reg_P}
P \Bigl( \underbrace{\left\{ \nu \in \P_{\mathrm{ac}}(\R^d): \nu(\Omb) = 1, \norm{\nu}_{C^{k,\alpha}(\Omb)} + \norm{\log \nu}_{L^\infty(\Omb)} \leq C \right\}}_{\eqqcolon \mathcal{Q}} \Bigr) = 1.
\end{align}

\begin{remark}\label{rem:GenRegBdry}
The following arguments are presented here for the case of a ball for simplicity but work for compact convex sets with $C^{k+2,\alpha}$-boundary which are strongly convex with a uniform modulus of convexity. More precisely, we require that there are $m$-strongly convex functions $H_\nu, H \in C^{k+2,\alpha}(\R^d)$ for $m>0$ such that
\begin{align*}
\Omega &= \{x \in \R^d:~ H(x) < 0\}, \partial \Omega = \{x \in \R^d:~ H(x) = 0 \}, \\
\supp \nu &= \{x \in \R^d:~ H_\nu(x) \leq 0\}, \partial (\supp \nu) = \{x \in \R^d:~ H_\nu(x) = 0 \},
\end{align*}
and there is an $R > 0$ such that $\Omega,~ \supp \nu \subset B_R(0)$ for $P$-a.e. $\nu$. We add remarks at the proofs that significantly depend on the domain.
\end{remark}

Thanks to the entropic regularization, this regularity implies regularity for the potentials and the barycenter.
\begin{proposition}\label{prop:regularity}
	Under assumption \eqref{ass_reg_P}, one has
	\begin{equation}
	\varphi_{\rhob}^{\nu}  \in C^{k+2, \alpha}(\Omb) \mbox{ for } P \mbox{-a.e. } \nu\quad \mbox{and}\quad \rhob \in C^{k+2, \alpha}(\Omb),
	\end{equation}
	and there is a constant $K > 0$ such that
	\begin{equation}\label{eq:regular_phi}
	\norm*{\varphi_{\rhob}^{\nu}}_{C^{k+2,\alpha}(\Omb)}, \norm*{\varphi_{\nu}^{\rhob}}_{C^{k+2,\alpha}(\Omb)} \leq K \mbox{ for } P \mbox{-a.e. } \nu.
	\end{equation}
	Furthermore, for $P$-a.e.\ $\nu$ the transport $\nabla \varphi_{\rhob}^{\nu}\colon \Omb \rightarrow \Omb$ is a diffeomorphism of class $C^{k+1,\alpha}$.
\end{proposition}
\begin{proof}
	By \eqref{ass_reg_P} $P$-a.e.\ $\nu \in C^{0,\alpha}(\Omb)$ is bounded from below and above on $\Omb$ by a constant only depending on $C$. 
	With the representation of $\rhob$ in \eqref{densrhob} we obtain that $\nabla \log\rhob$ is bounded by $2R/\lambda$ a.e. Together with $\int \rhob = 1$ this implies that $\norm{\log \rhob}_{C^{0,1}(\Omb)}$ is bounded by a constant only depending on $R$ and $\lambda$.
	
	This implies by Caffarelli's regularity theory for Monge--Amp\`ere equations (see \cite{caffarelli1996boundary} for the original paper and Theorem~3.3 \cite{defig2014} for a concise formulation) that for any $\nu \in \mathcal{Q}$, $\varphi_{\rhob}^{\nu} \in C^{2, \alpha}(\Omb)$ and $\nabla \varphi_{\rhob}^{\nu}\colon \Omb \rightarrow \Omb$ is a diffeomorphism. 
		
	For the uniform estimate again by Caffarelli's regularity theory for Monge--Amp\`ere equations (theorem on page~3 of \cite{caffarelli1992}) there is an $\alpha_1 \in (0,1)$ and constant $C_1$ (only depending on $\alpha_1$, $C$ and $R$) such that 
	\begin{equation*}
	\norm*{\varphi_{\rhob}^{\nu}}_{C^{1,\alpha_1}(\Omb)}, \norm*{\varphi_{\nu}^{\rhob}}_{C^{1,\alpha_1}(\Omb)} \leq C_1 \quad \mbox{ for every } \nu \in \mathcal{Q}.
	\end{equation*}
	This implies in particular $\rhob \in C^{1,\alpha_1}(\Omb)$ by \eqref{densrhob} and we can apply Theorem~\ref{thm:DiffMAmapping} to see that 
	\begin{equation*}
	\begin{array}{rll}
	\Phi^{\rhob}\colon \left\{ \nu \in C^{0, \alpha_1}(\Omb): \nu(\Omb) = 1, \norm{\log \nu}_{L^\infty(\Omb)} < \infty \right\} &\rightarrow& \cM \\
	\nu &\mapsto& \varphi_\nu^{\rhob}
	\end{array}
	\end{equation*}
	is continuous (where $\cM$ denotes the set of $C^{2, \alpha_1}(\Omb)$ convex potentials  $\varphi$ with zero mean such that $\norm{ \nabla \varphi} = R$ on $\partial \Omega$). Now note that, by the compact embedding of H\"older spaces, $\mathcal{Q}$ is compact in $C^{0, \alpha_1}(\Omb)$. This implies that $\Phi^{\rhob}( \mathcal{Q})$ is compact in $C^{2, \alpha_1}(\Omb)$. Hence, there is a $K_1 > 0$ such that
	\begin{equation}\label{eq:bound_LegendreTrans}
	\norm*{\varphi_{\nu}^{\rhob}}_{C^{2,\alpha_1}(\Omb)} \leq K_1 \mbox{ for } P \mbox{-a.e. } \nu.
	\end{equation}
	
	Furthermore, since each $\varphi_{\nu}^{\rhob}$ is strongly convex thanks to compactness of $\Phi^{\rhob}(\mathcal{Q})$ we conclude that there is constant $c>0$ such that
	\begin{equation}\label{eq:unif_conv_LegTrans}
	D^2 \varphi_{\nu}^{\rhob} \geq c \mbox{ for } P \mbox{-a.e. } \nu,
	\end{equation}
	so that we obtain
	\begin{equation}\label{eq:bound_secDer}
	\norm*{D^2 \varphi_{\rhob}^{\nu}}_{L^\infty(\Omega)} \leq c \mbox{ for } P \mbox{-a.e. } \nu,
	\end{equation}
	which gives $\rhob \in C^{1,1}(\Omb)$ and then again by Caffarelli's regularity theory for Monge--Amp\`ere equations $\varphi_{\rhob}^{\nu} \in C^{3, \alpha}(\Omb)$.
	Differentiating now the Monge--Amp\`ere equation (which is satisfied in the classical sense)
	\begin{equation*}
	\begin{array}{rlll}
	\det(D^2 \varphi_{\rhob}^{\nu}) \nu(\nabla \varphi_{\rhob}^{\nu}) &=& \rhob & \text{ in } \Omega, \\
	\norm*{\nabla\varphi_{\rhob}^{\nu}}^2 &=& R^2 & \text{ on } \partial \Omega,
	\end{array}
	\end{equation*}
	in direction $e \in \mathbb{S}^{d-1}$, we obtain by the same considerations as in the appendix
	\begin{equation}
	\begin{array}{rlll}
	\div(A_\nu \nabla (\partial_e \varphi_{\rhob}^{\nu})) &=& \partial_e \rhob & \text{ in } \Omega, \\
	\nabla \varphi_{\rhob}^{\nu} \cdot \nabla (\partial_e \varphi_{\rhob}^{\nu}) &=& 0 & \text{ on } \partial \Omega,
	\end{array}
	\end{equation}
	where $A_\nu = \nu(\nabla \varphi_{\rhob}^{\nu}) \det(D^2 \varphi_{\rhob}^{\nu}) (D^2 \varphi_{\rhob}^{\nu})^{-1}$.
	Thanks to Lemma~\ref{lem:gradient_oblique} and \eqref{eq:unif_conv_LegTrans} we can finally deduce by classical Schauder estimates (Theorem~6.30 in \cite{gilbarg2015elliptic}) that there is constant $K > 0$ uniform in $\nu$ such that 
	\begin{equation*}
	\norm*{\partial_e \varphi_{\rhob}^{\nu}}_{C^{2,\alpha}(\Omb)} \leq K \left(\norm*{\partial_e \varphi_{\rhob}^{\nu}}_{C^{0,\alpha}(\Omb)} + \norm{\partial_e \rhob}_{C^{0,\alpha}(\Omb)}\right).
	\end{equation*}
	This concludes the uniform estimate of $\varphi_{\rhob}^{\nu}$ in $C^{3,\alpha}(\Omb)$ for $P$-a.e. $\nu$, and by again employing \eqref{densrhob} we deduce $\rhob \in C^{3,\alpha}(\Omb)$. The same bound follows for $\varphi_{\nu}^{\rhob}$ by exchanging the role of $\rhob$ and $\nu$. Higher regularity follows by standard elliptic theory.

\end{proof}

Note in particular that $\varphi_{\rhob}^{\nu}$ satisfies the Monge--Amp\`ere equation, subject to the \emph{second boundary} value condition, which encodes the fact that $\nabla \varphi_{\rhob}^\nu$ maps the ball into itself, in the classical sense
\begin{equation}\label{eq:MAclassic}
\begin{aligned}
\det(D^2\varphi_{\rhob}^\nu) \nu (\nabla \varphi_{\rhob}^\nu) &= \rhob \text{ in $B$} \\
\nabla \varphi_{\rhob}^\nu (B) &\subset B,
\end{aligned}
\end{equation}
and that the second boundary value condition is equivalent (see Lemma \ref{lem:diff_boundary_equiv})  to an eikonal equation on the boundary
\begin{align*}
	\norm{\nabla \varphi_{\rhob}^\nu (x)}^2 = R^2, \; \forall x\in \partial B.
\end{align*}

\subsection{The case of log-concave measures on \texorpdfstring{$\R^d$}{Rd}}

Caffarelli's contraction principle \cite{CaffarelliFKG}, generalized by Kolesnikov in \cite{Kolesnikov}, implies global (and dimension-free) Lipschitz (or H\"{o}lder) global estimates for the optimal transport between suitable log-concave measures. In its original form, Caffarelli's Theorem says that the optimal transport between the standard Gaussian $\gamma$ and a measure which is more log-concave (i.e.\ has the form $e^{-V} \gamma$ with $V$ convex) is $1$-Lipschitz. Since the entropic barycenter is less log-concave than a Gaussian, if the measures $\nu$ satisfy a uniform log-concavity estimate, one can deduce a $C^{1,1}$ regularity result for $\log(\rhob)$:

\begin{proposition}\label{logconcaveprop}
	Assume $\Omega = \R^d$ and  that  for $P$-a.e. $\nu$ there is some $A_\nu >0$ such that $\nu$ writes as $\diff \nu = e^{-V_\nu} \diff y$ with $D^2 V_\nu \geq A_\nu I$ (in the sense of distributions), such that $\E\left[\sqrt{\lambda A_\nu}^{-1}\right]<\infty$, where the expectation is taken with respect to the random variable $\nu$ distributed according to $P.$ Let $\rhob \eqset \bary_{\lambda}(P)$ be its entropic barycenter. Then $\log \rhob \in C^{1, 1}(\R^d)$ and more precisely there holds
	\begin{equation}\label{c11logrhob}
	-I \leq \lambda D^2 \log \rhob \leq \Bigl( \E\left[\frac{1}{\sqrt{\lambda A_\nu}}\right] - 1 \Bigr) I. 
	\end{equation} 
\end{proposition}
\begin{proof}
	It directly follows from \eqref{densrhob} that $\rhob=e^{-\psi}$ with $D^2 \psi \leq \frac{I}{\lambda}$. Since for $P$-a.e. $\nu$ we have $\diff \nu=e^{-V_\nu} \diff y$ with $D^2 V_\nu \geq A_\nu I$, thanks to Caffarelli's contraction Theorem, the optimal transport map $\nabla \varphi_{\rhob}^\nu$ is Lipschitz with the explicit estimate
	\[0 \leq D^2 \varphi_{\rhob}^\nu \leq \frac{I}{\sqrt{\lambda A_\nu}} \quad P\text{-a.e.}\]
	so that the convex potential $\Phi \eqset \E\left[ \varphi_{\rhob}^{\nu} \right]$ is $C^{1,1}$ and has $\E\left[\sqrt{\lambda A}^{-1}\right]<\infty$ as an upper bound on its Hessian. Since $\lambda \nabla \log(\rhob) + \id = \nabla \Phi$, the bound \eqref{c11logrhob} directly follows. 
\end{proof}
%\label{sec-reg}
\section{Statistical properties}\label{sec-CLT}

\subsection{Stochastic setting and law of large numbers}

Now we consider the following stochastic setting \cite{bigot2012consistent,kroshnin2018frechet,ahidar2019convergence}: let $P$, as above, be a distribution on $\P_2(\Omega)$ with finite second moment, and $\nu_1, \nu_2, \dots$ be independent random measures drawn from $P$. 
We will call the barycenter of the first $n$ measures $\nu_1, \dots, \nu_n$ an \emph{empirical barycenter}: $\rhob_n = \bary_{\lambda, \Omega}(P_n)$, where $P_n = \frac{1}{n} \sum_{i = 1}^n \delta_{\nu_i}$ is the empirical measure.
Note that $\rhob_n$ is random, and in this section we will establish its statistical properties, namely, consistency and (under additional assumptions) a central limit theorem.
As already mentioned in section~\ref{sec-properties}, a LLN follows immediately from Lemma~\ref{lem:stability}.

\begin{thm}[law of large numbers]\label{thm:LLN}
	Assume $\int_{\P_2(\R^d)} m_p(\nu) \d P(\nu) < +\infty$ for some $p \ge 2$. Let $\rhob$ be the entropic-Wasserstein barycenter of $P$ and $\{\rhob_n\}_{n \in \N}$ be empirical barycenters. Then it a.s.\ holds that
	\begin{align*}
		W_p(\rhob_n, \rhob) &\longrightarrow 0, \\
		\log \rhob_n &\xrightarrow{W^{1,q}_{\loc}(\Omega)} \log \rhob \quad \forall 1 \le q < \infty, \\
		\rhob_n^{1/p} &\xrightarrow{W^{1,p}(\Omega)} \rhob^{1/p}.
	\end{align*}
	Moreover, under assumption~\eqref{ass_reg_P} $\rhob_n \xrightarrow{a.s.} \rhob$ in $C^{k+2,\beta}(\Omb)$ for any $\beta \in (0, \alpha)$.
\end{thm}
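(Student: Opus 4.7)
The plan is to obtain the first three convergences as a direct pathwise application of the stability lemma (Lemma~\ref{lem:stability}), and then upgrade to $C^{k+2,\beta}$ convergence by combining the uniform higher regularity of Proposition~\ref{prop:regularity} with a compactness argument.

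First, I would establish that $\mathcal{W}_p(P_n, P) \to 0$ almost surely. The hypothesis $\int m_p(\nu) \d P(\nu) < +\infty$ is exactly the condition $P \in \P_p\bigl(\P_p(\R^d)\bigr)$ (using $W_p^p(\nu, \delta_0) = m_p(\nu)$). The strong law of large numbers for empirical measures in the Polish metric space $(\P_p(\R^d), W_p)$ (Varadarajan's theorem, plus the standard moment argument which upgrades narrow convergence to convergence in $W_p$) then yields $\mathcal{W}_p(P_n, P) \to 0$ almost surely. On the full-probability event where this holds, Lemma~\ref{lem:stability} applied pathwise gives at once the three convergences $W_p(\rhob_n, \rhob) \to 0$, $\log \rhob_n \to \log \rhob$ in $W^{1,q}_{\loc}(\Omega)$ for all $q \in [1, \infty)$, and $\rhob_n^{1/p} \to \rhob^{1/p}$ in $W^{1,p}(\Omega)$.

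For the higher regularity statement, I would argue as follows. Under \eqref{ass_reg_P}, almost surely every $\nu_i$ lies in the class $\mathcal{Q}$, so $P_n$ satisfies \eqref{ass_reg_P} with the same constants. Applying Proposition~\ref{prop:regularity} to each $P_n$, I get $\rhob_n \in C^{k+2,\alpha}(\Omb)$ almost surely. The key observation — which should be read off from the proof of Proposition~\ref{prop:regularity} — is that the resulting $C^{k+2,\alpha}$ bound depends only on $\alpha, k, C, R, d, \lambda$ appearing in \eqref{ass_reg_P}, and not on the particular measure on $\mathcal{Q}$. Hence there is a deterministic constant $K$ with $\norm{\rhob_n}_{C^{k+2,\alpha}(\Omb)} \le K$ almost surely.

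For $\beta \in (0, \alpha)$, the embedding $C^{k+2,\alpha}(\Omb) \hookrightarrow C^{k+2,\beta}(\Omb)$ is compact, so almost surely $\{\rhob_n\}$ is precompact in $C^{k+2,\beta}(\Omb)$. Any $C^{k+2,\beta}$ cluster point must agree with $\rhob$, since the already established $W^{1,q}_{\loc}$ convergence of $\log \rhob_n$ to $\log \rhob$ (equivalently, local convergence of $\rhob_n$) identifies the limit uniquely. Therefore the full sequence converges to $\rhob$ in $C^{k+2,\beta}(\Omb)$.

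The only nontrivial step is checking that the constants in Proposition~\ref{prop:regularity} are uniform over all probabilities on $\mathcal{Q}$: the Caffarelli estimates used there depend solely on the bounds $C, R$, and the compactness argument for $\Phi^{\rhob}(\mathcal{Q})$ is performed on the deterministic class $\mathcal{Q}$, so this uniformity is built into the proof. Beyond this verification, the theorem reduces to a pathwise application of Lemma~\ref{lem:stability} and Proposition~\ref{prop:regularity}.
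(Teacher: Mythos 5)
Your proof follows the same route as the paper: establish $\mathcal{W}_p(P_n,P)\to 0$ almost surely (the paper cites a corollary from the literature, you reconstruct it via Varadarajan plus a moment upgrade --- both fine), apply Lemma~\ref{lem:stability} pathwise to obtain the first three convergences, and combine the uniform $C^{k+2,\alpha}$ bound of Proposition~\ref{prop:regularity} with the compact H{\"o}lder embedding for the final claim. Your explicit flagging of the uniformity of the constants in Proposition~\ref{prop:regularity} over the family $\{P_n\}$, and your identification of the $C^{k+2,\beta}$ cluster point via the already established $W^{1,q}_{\loc}$ convergence rather than via the weak convergence $\rhob_n\rightharpoonup\rhob$ as in the paper, are inconsequential variations.
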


\begin{proof}
	It is well-known that, since the Wasserstein space is Polish, empirical measures $P_n$ converge to $P$ in $\mathcal{W}_p$ metric (see e.g.\ Corollary~5.9 in \cite{kroshnin2018frechet}). Then the first part of the theorem follows from Lemma~\ref{lem:stability}.
	
	Further, once~\eqref{ass_reg_P} holds, sequence $\{\rhob_n\}_{n \in \N}$ is uniformly bounded in $C^{k+2,\alpha}(\Omb)$ by Proposition~\ref{prop:regularity}. Therefore, due to compact H{\"o}lder embedding and weak convergence $\rhob_n \rightharpoonup \rhob$, the second claim follows.
\end{proof}

\subsection{Central limit theorem}

Let $H$ be a separable Hilbert space endowed with its Borel sigma-algebra.
Recall that random variables $\{X_n\}_{n \in \N}$ taking values in $H$ converge in distribution to a random variable\ $X$ if $\E f(X_n) \to \E f(X)$ for any bounded continuous function $f$ on $H$. We denote this convergence by
\[
X_n \xrightarrow{d} X.
\]
We also need to recall the notion of strong operator topology (SOT): operators $A_n$ on $H$ converge to $A$ in SOT ($A_n \xrightarrow{\mathrm{SOT}} A$), if $A_n u \to A u$ for all $u \in H$.
Finally, to prove a central limit theorem for barycenters we will use some technical results from probability theory postponed to Appendix~\ref{sec-app_prob}.

Let us also introduce the following notation:
if $\mathcal{F}$ is a space of integrable functions on $\Omb$, then
\begin{equation}\label{def:zero_mean_space}
	\mathcal{F}_\diamond \eqset \left\{f \in \mathcal{F} : \int_\Omega f = 0\right\}.
\end{equation}

\begin{thm}[central limit theorem]\label{thm:CLT}
	Let assumption~\eqref{ass_reg_P} be fulfilled with $k = 1$. Then a CLT for empirical barycenters holds in $H^2_\diamond(B)$:
	\[
	\sqrt{n} \left(\rhob_n - \rhob\right) \xrightarrow{d} \xi \sim \mathcal{N}(0, \Sigma),
	\]
	with covariance operator $\Sigma = G^{-1} \Var(\varphi_{\rhob}^\nu) G^{-1}$ where 
	\[
	G \colon u \mapsto \lambda \frac{u}{\rhob} - \lambda \fint_B \frac{u}{\rhob} - \E (\Phi^\nu)'(\rhob)
	\]
	and $\Phi^{\nu}(\rhob)$ is the zero-mean Brenier potential between $\rhob$ and $\nu$. 
\end{thm}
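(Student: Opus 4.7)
The plan is to implement an infinite-dimensional delta method: view both $\rhob_n$ and $\rhob$ as solutions of the same type of Euler--Lagrange equation driven by $P_n$ and $P$ respectively, subtract, linearize, invert the linearization and conclude by a Hilbert-space CLT. Concretely, from the characterization in Proposition~\ref{prop:characterization}, after fixing normalizations so that the Brenier potentials $\varphi_{\rhob}^{\nu}$ have zero mean on $B$, one has
\[
\lambda \log \rhob + \Phi - \tfrac{1}{2}\norm{\cdot}^2 = c,
\qquad
\lambda \log \rhob_n + \Phi_n - \tfrac{1}{2}\norm{\cdot}^2 = c_n,
\]
where $\Phi \eqset \int \varphi_{\rhob}^{\nu}\d P(\nu)$ and $\Phi_n \eqset \int \varphi_{\rhob_n}^{\nu}\d P_n(\nu)$. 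Under assumption \eqref{ass_reg_P} with $k=1$, Proposition~\ref{prop:regularity} and the LLN (Theorem~\ref{thm:LLN}) ensure uniform $C^{3,\alpha}(\Omb)$ bounds on $\rhob_n$ and a.s. convergence $\rhob_n \to \rhob$ in $C^{3,\beta}(\Omb)$, so all quantities below make sense and the density $\rhob$ is bounded away from zero on $\Omb$.

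Subtracting the two equations and writing $\delta_n \eqset \rhob_n-\rhob$ gives
\[
\lambda(\log\rhob_n-\log\rhob) + (\Phi_n - \tilde\Phi_n) + (\tilde\Phi_n - \Phi) = c_n-c,
\]
where $\tilde\Phi_n \eqset \int \varphi_{\rhob}^{\nu}\d P_n(\nu)$. The first term linearizes as $\lambda \delta_n/\rhob + R_1(\delta_n)$ with a quadratic remainder controlled by $\norm{\delta_n}_{L^\infty}$. The bracket $\Phi_n - \tilde\Phi_n = \int[\varphi_{\rhob_n}^{\nu}-\varphi_{\rhob}^{\nu}]\d P_n(\nu)$ is handled using the linearization of the Monge--Amp\`ere operator from the appendix: writing $\varphi_{\rhob_n}^{\nu} - \varphi_{\rhob}^{\nu} = (\Phi^\nu)'(\rhob)\,\delta_n + R_2^\nu(\delta_n)$ where $(\Phi^\nu)'(\rhob)$ is the derivative provided by the appendix (inverse of the linearized Monge--Amp\`ere with the oblique boundary condition), one obtains, after integrating against $P_n$ and using $P_n\to P$ in conjunction with the uniform $C^{2,\alpha}$ estimates, that $\Phi_n-\tilde\Phi_n = \E(\Phi^\nu)'(\rhob)\,\delta_n + o(\norm{\delta_n})$ in $L^2$. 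Choosing $c_n$ so that $\int_B \delta_n = 0$ lets us project onto $L^2_\diamond(B)$; the operator $G$ in the statement is exactly the resulting linearization, the subtraction $\lambda\fint \delta_n/\rhob$ accounting for the mean-zero normalization.

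Step three is to invert $G$ on $L^2_\diamond(B)$. The multiplication-by-$\lambda/\rhob$ part is a bounded, coercive self-adjoint operator on $L^2_\diamond(B)$ since $\rhob$ is bounded above and below, and $-\E(\Phi^\nu)'(\rhob)$ is compact and non-positive on $L^2_\diamond(B)$ (it is an average of solution operators of uniformly elliptic linearized Monge--Amp\`ere problems, whose regularizing effect yields compactness via Rellich, and whose sign comes from the convexity of $\rho\mapsto \tfrac12 W_2^2(\rho,\nu)$). Thus $G$ is self-adjoint, bounded, coercive and invertible on $L^2_\diamond(B)$. Rearranging gives
\[
\sqrt{n}\,\delta_n = -G^{-1}\!\Bigl[\sqrt{n}\bigl(\tilde\Phi_n - \Phi\bigr)\Bigr] + \sqrt{n}\,G^{-1}\bigl[R_1(\delta_n)+R_2(\delta_n)\bigr],
\]
where the remainder terms are $O(\sqrt n\,\norm{\delta_n}_{L^2}^2)$ in $L^2_\diamond(B)$; since the LLN for barycenters combined with the uniform $C^{3,\alpha}$ bound gives $\norm{\delta_n}_{L^2}=o(n^{-1/4})$ a.s. (or at least $O_{\mathbb{P}}(n^{-1/2})$ once the leading order is controlled), the remainder is negligible.

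The main driving term $\sqrt{n}(\tilde\Phi_n - \Phi) = \sqrt{n}\int \varphi_{\rhob}^{\nu}\d(P_n-P)(\nu)$ is a sum of i.i.d.\ centered random elements of $L^2_\diamond(B)$ whose second moments are finite thanks to the uniform $C^{2,\alpha}$ bound of Proposition~\ref{prop:regularity}. A Hilbert-space CLT (from Appendix~\ref{sec-app_prob}) then gives $\sqrt{n}(\tilde\Phi_n - \Phi) \xrightarrow{d} \mathcal{N}\bigl(0,\Var(\varphi_{\rhob}^{\nu})\bigr)$ in $L^2_\diamond(B)$. Applying the continuous linear operator $-G^{-1}$ and using Slutsky's theorem to absorb the remainder yields the announced limit with covariance $G^{-1}\Var(\varphi_{\rhob}^{\nu})G^{-1}$. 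The main obstacle I expect is the quantitative linearization of $\rho\mapsto \varphi_\rho^\nu$, namely obtaining a remainder estimate $R_2^\nu(\delta_n) = o(\norm{\delta_n})$ uniformly enough in $\nu$ to integrate against $P_n$ and to beat the $\sqrt n$ factor; this is precisely what the Monge--Amp\`ere linearization results of the appendix are designed for, but the combination with the randomness of $P_n$ and the need for a tightness argument in $L^2_\diamond(B)$ requires some care.
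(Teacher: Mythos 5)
Your plan is the natural delta method, and it shares the same building blocks as the paper (the characterization \eqref{densrhob}, the differentiability from Theorem~\ref{thm:DiffMAmapping}, and the Hilbert-space CLT plus Slutsky). The crucial difference is that the paper never produces a quadratic remainder at all: instead of expanding $F(\rhob_n)-F(\rhob)$ and $\Phi^{\nu_i}(\rhob_n)-\Phi^{\nu_i}(\rhob)$ to first order with an error, it applies the integral form of the mean value theorem along $\rhob_n^t=(1-t)\rhob+t\rhob_n$ to obtain the \emph{exact} identity
\[
G_n\,(\rhob_n-\rhob)=\frac{1}{n}\sum_{i=1}^n\varphi_i-\E\varphi,
\qquad
G_n=\int_0^1 F'(\rhob_n^t)\,\diff t-\frac1n\sum_i\int_0^1(\Phi^{\nu_i})'(\rhob_n^t)\,\diff t,
\]
so that $\sqrt n(\rhob_n-\rhob)=G_n^{-1}\bigl(\tfrac1{\sqrt n}\sum_i(\varphi_i-\E\varphi)\bigr)$ with no remainder whatsoever. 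Once this is in hand, the only analytic input needed is that the random operators $G_n^{-1}$ are uniformly bounded (coercivity from $F'>0$ and $-(\Phi^\nu)'\ge0$) and converge a.s.\ to $G^{-1}$ in the strong operator topology; this is a LLN-type fact (Lemma~\ref{lem:function_LLN}) that needs no rate, and one concludes by the Slutsky lemma for operators (Lemma~\ref{lem:Slutsky}).

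Your route has a genuine gap precisely where you flag a worry: the claim that the LLN gives $\norm{\delta_n}_{L^2}=o(n^{-1/4})$ a.s.\ is false --- Theorem~\ref{thm:LLN} only yields $\norm{\delta_n}\to0$ with no rate. The fallback ``$O_{\mathbb P}(n^{-1/2})$ once the leading order is controlled'' is circular as stated; it can be made rigorous by a bootstrap (absorb the $O(\norm{\delta_n}^2)$ term for $n$ large and deduce $\norm{\delta_n}\lesssim\norm{\tilde\Phi_n-\Phi}=O_{\mathbb P}(n^{-1/2})$), but this requires a quantitative second-order remainder estimate $\norm{\varphi_{\rho}^\nu-\varphi_{\rhob}^\nu-(\Phi^\nu)'(\rhob)(\rho-\rhob)}\lesssim\norm{\rho-\rhob}^2$ that is uniform in $\nu\in\mathcal Q$; Theorem~\ref{thm:DiffMAmapping} only gives $C^1$ (continuous) differentiability, not a Lipschitz derivative. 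The paper's exact linearization avoids both the rate and the quantitative remainder bound entirely, which is the point of the trick. As secondary issues: your starting identity should read $\lambda\log\rhob+\tfrac12\norm{\cdot}^2-\Phi=c$ (you have the signs on $\Phi$ and $\tfrac12\norm{\cdot}^2$ flipped), you assert that $-\E(\Phi^\nu)'(\rhob)$ is ``non-positive'' when it is in fact nonnegative (this is what makes $G$ coercive), and compactness of $(\Phi^\nu)'$ is not needed --- the paper only uses coercivity and uniform boundedness of $G_n^{-1}$.
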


\begin{proof}
	\textbf{Step 1.}
	Let us introduce the following map $F$ 
	\begin{align*}
	F  & :   \left\{\rho \in C^2(\bar{B}) : \int_B \rho = 1,\, \min_{\bar{B}} \rho > 0\right\}  \to C^2_{\diamond}(\bar{B}) \\
	  & \rho  \mapsto \lambda \log \rho + \frac{\norm{\cdot}^2}{2} - \fint_B \left(\lambda \log \rho(x) + \frac{\norm{x}^2}{2}\right).
	\end{align*}
	It is continuously differentiable and its derivative is
	\[
	F'(\rho) \colon u \mapsto \lambda \frac{u}{\rho} - \lambda \fint_B \frac{u}{\rho}.
	\]
	
	Then equation~\eqref{densrhob} can be rewritten (see Appendix \ref{sec-app} for  properties of the map $\Phi^{\nu}$) as follows:
	\[
	F(\rhob) = \E \Phi^\nu(\rhob).
	\]
	Respectively, for the empirical barycenter it reads as
	\[
	F(\rhob_n) = \frac{1}{n} \sum_{i = 1}^n \Phi^{\nu_i}(\rhob_n).
	\]
	Combining the above equations and using differentiability of $F$ and $\Phi^\nu$ (Theorem~\ref{thm:DiffMAmapping}), we obtain
	\begin{equation}\label{eq:char_linear}
	    G_n(\rhob_n - \rhob) = F(\rhob_n) - F(\rhob) - \frac{1}{n} \sum_{i = 1}^n \left(\Phi^{\nu_i}(\rhob_n) - \Phi^{\nu_i}(\rhob)\right) = \frac{1}{n} \sum_{i = 1}^n \varphi_i - \E \varphi,
	\end{equation}
	where $\varphi_i = \varphi^{\nu_i}_{\rhob}$, $\E \varphi = \E \varphi^\nu_{\rhob}$, and the operator $G_n$ is defined as follows:
	\begin{equation}
	    G_n \eqset \int_0^1 F'(\rhob_n^t) \d t - \frac{1}{n} \sum_{i = 1}^n \int_0^1 (\Phi^{\nu_i})'(\rhob_n^t) \d t
	\end{equation}
	with $\rhob_n^t \eqset (1 - t) \rhob + t \rhob_n$.

	\textbf{Step 2.}
	We are going to apply a delta-method to prove a CLT and to do this we need a convergence (in an appropriate space)
	\begin{equation}
		(G_n)^{-1} \xrightarrow{P} G^{-1}, \quad 
		G \eqset F'(\rhob) - \E (\Phi^\nu)'(\rhob).
	\end{equation}
	
	We will consider CLT in $H^2_\diamond(B)$, but first, let us extend all the linear operators above to $L^2_\diamond(B)$.
	Denote by $\Bary_{\lambda, B}(\mathcal{Q})$ the set of entropic barycenters of all measures supported on $\mathcal{Q}$: 
	\[
	\Bary_{\lambda, B}(\mathcal{Q}) \eqset \left\{\bary_{\lambda, B}(P) : P \in \P(\P_2(\R^d)),\, P(\mathcal{Q}) = 1\right\}.
	\]
	Clearly, the operators $F'(\rho)$ are Hermitian, bounded and positive definite for all $\rho \in \Bary_{\lambda, B}(\mathcal{Q})$. 
	Using \eqref{eq:regular_phi} and \eqref{densrhob} we conclude that these $\rho$ are uniformly bounded away from zero, thus $F'(\rho)$ are uniformly positive-definite: indeed, for any $u \in L^2_\diamond(B)$
	\begin{equation}\label{eq:diff_F_bound}
		\langle u, F'(\rho) u \rangle_{L^2(B)} = \int_{B} \lambda \frac{u^2}{\rho} 
		\ge \frac{\lambda}{\min_{\bar{B}} \rho} \norm{u}_{L^2(B)}^2
		\ge c_F \norm{u}_{L^2(B)}^2 .
	\end{equation}

	For all $\rho \in \Bary_{\lambda, B}(\mathcal{Q})$ and $\nu \in \mathcal{Q}$ it holds that $-(\Phi^\nu)'(\rho)$ are Hermitian and nonnegative. They are also uniformly bounded since all $\nu$ and $D^2 \varphi_\rho^\nu$ are uniformly bounded away from zero according to~\eqref{ass_reg_P} and Proposition~\ref{prop:regularity}: namely, Theorem~\ref{thm:DiffMAmapping} together with the Poincar{\'e} inequality and Theorem 6.27 \cite{lieberman2013oblique} yield that there is a constant $C_\Phi > 0 $ such that
	\begin{equation}\label{eq:diff_Phi_bound}
		\norm*{(\Phi^\nu)'(\rho) u}_{H^2(B)} \le C_\Phi \norm{u}_{L^2(B)}.
	\end{equation}
	In particular, the operators $G$ and all $G_n$ are a.s.\ well-defined, uniformly positive definite, and thus continuously invertible in $L^2_\diamond(B)$ with 
	\[
	\norm{G^{-1}}_{L^2_\diamond(B)} \le \norm{F'(\rhob)^{-1}}_{L^2_\diamond(B)} \le \frac{1}{c_F},\quad \norm{G_n^{-1}}_{L^2_\diamond(B)} \le \frac{1}{c_F}.
	\]

	Now we consider the space $H^2_\diamond(B)$. 
	It is easy to see that $F'(\rho)$ and $(\Phi^\nu)'(\rho)$ can be continuously extended to it for any $\rho \in \Bary_{\lambda, B}(\mathcal{Q})$, $\nu \in \mathcal{Q}$. 
	We are going to show that there exist $G^{-1}$ and $G_n^{-1}$ for all $n$, and they are uniformly bounded.
	Obviously, $F'(\rhob)$ and $\int_0^1 F'(\rhob_n^t) \d t$ are continuously invertible, with uniformly bounded inverses. In particular, they are Fredholm operators of index $0$.
	Due to \eqref{eq:diff_Phi_bound} and the Rellich--Kondrachov theorem, $(\Phi^\nu)'(\rho)$ are compact  and uniformly bounded in $H^2_\diamond(B)$ for all $\rho \in \Bary_{\lambda, B}(\mathcal{Q})$, $\nu \in \mathcal{Q}$, as well as any of their average. Thus $G \eqset F'(\rhob) - \E (\Phi^\nu)'(\rhob)$ is a Fredholm operator, and $\operatorname{ind} G = \operatorname{ind} F'(\rhob) = 0$; since $G$ is positive definite in $L^2_\diamond(B)$, $\ker G = \{0\}$, therefore $G$ is invertible in $H^2_\diamond(B)$. The same applies for any $G_n$.
	Let us prove that $G_n^{-1}$ are uniformly bounded in $H^2_\diamond(B)$. Suppose $G_n u = v \in H^2_\diamond(B)$. Then
	\begin{align*}
		\norm*{\left(\int_0^1 F'(\rhob_n^t) \d t\right) u}_{H^2(B)} 
		&\le \norm{v}_{H^2(B)} + \frac{1}{n} \sum_{i = 1}^n \int_0^1 \norm*{(\Phi^{\nu_i})'(\rhob_n^t) u}_{H^2(B)} \d t \\
		&\le \norm{v}_{H^2(B)} + C_\Phi \norm{u}_{L^2(B)} \\
		&\le \norm{v}_{H^2(B)} + C_\Phi \norm{G_n^{-1}}_{L^2_\diamond(B)} \norm{v}_{L^2(B)} \\
		&\le \left(1 + \frac{C_\Phi}{c_F}\right) \norm{v}_{H^2(B)} .
	\end{align*}
	On the other hand,
	\[
	\norm*{\left(\int_0^1 F'(\rhob_n^t) \d t\right) u}_{H^2(B)} 
	\ge \norm*{\left(\int_0^1 F'(\rhob_n^t) \d t\right)^{-1}}_{H^2_\diamond(B)}^{-1} \norm{u}_{H^2(B)}
	\ge c \norm{u}_{H^2(B)} .
	\]
	Therefore, 
	\[
	\norm{G_n^{-1}}_{H^2_\diamond(B)} \le \frac{1}{c} \left(1 + \frac{C_\Phi}{c_F}\right) .
	\]

	Now let us prove that $G_n^{-1} \to G^{-1}$ in SOT. First, 
	\[
	\int_0^1 F'(\rhob_n^t) \d t \to F'(\rhob) \quad \text{a.s.}
	\]
	since $\rhob_n \xrightarrow{C^2(\bar{B})} \rhob$ a.s.\ by Theorem~\ref{thm:LLN}. Second, \eqref{eq:diff_Phi_bound}, the LLN, and separability of $H^2_\diamond(B)$ yield that 
	\[
	\frac{1}{n} \sum_{i = 1}^n (\Phi^{\nu_i})'(\rhob) \xrightarrow{\mathrm{SOT}} \E (\Phi^{\nu})'(\rhob) \quad \text{a.s.}
	\]
	It remains to show that 
	\begin{equation}\label{eq:SOT_conv}
		\frac{1}{n} \sum_{i = 1}^n \int_0^1 (\Phi^{\nu_i})'(\rhob_n^t) \d t - \frac{1}{n} \sum_{i = 1}^n (\Phi^{\nu_i})'(\rhob) \xrightarrow{\mathrm{SOT}} 0 \quad \text{a.s.}
	\end{equation}
	Let $\nu \in \mathcal{Q}$ and $\rho \xrightarrow{C^2(\bar{B})} \rhob$. Due to Theorem~\ref{thm:DiffMAmapping} 
	\[
	\norm*{(\Phi^\nu)'(\rho) u - (\Phi^\nu)'(\rhob) u}_{H^2(B)} \to 0 \text{~~for any~~} u \in C^{0, \alpha}_\diamond(\bar{B}) ,
	\]
	hence \eqref{eq:diff_Phi_bound} and the density of $C^{0, \alpha}_\diamond(\bar{B})$ in $L^2_\diamond(B)$ yield that $(\Phi^\nu)'(\rho) \to (\Phi^\nu)'(\rhob)$ in SOT on $H^2_\diamond(B)$.
	Now we fix $u \in H^2_\diamond(B)$, then the functions 
	\[
	f^{\nu_i}(\rho) \eqset \norm*{\int_0^1 (\Phi^{\nu_i})'(\rho^t) u \d t - (\Phi^{\nu_i})'(\rhob) u}_{H^2(B)},
	\]
	where $\rho^t \eqset (1 - t) \rhob + t \rho$, are bounded, continuous, and $f^{\nu_i}(\rhob) = 0$. Since $\rhob_n \xrightarrow{C^2(\bar{B})} \rhob$ a.s., Lemma~\ref{lem:function_LLN} ensures that 
	\[
		\norm*{\frac{1}{n} \sum_{i = 1}^n \int_0^1 (\Phi^{\nu_i})'(\rhob_n^t) u \diff t - \frac{1}{n} \sum_{i = 1}^n (\Phi^{\nu_i})'(\rhob) u}_{H^2(B)}
		\le \frac{1}{n} \sum_{i = 1}^n f^{\nu_i}(\rhob_n) 
		\to 0 \quad \text{a.s.}
	\]
	Taking a dense countable set $\{u_j\}_{j \in \N}$ in $H^2_\diamond(B)$ and using boundedness of $(\Phi^\nu)'$  by \eqref{eq:diff_Phi_bound} one obtains~\eqref{eq:SOT_conv}. 
	Combining the above results we conclude that $G_n \xrightarrow{\mathrm{SOT}} G$ a.s. 
	Finally, for any $u \in H^2_\diamond(B)$ one has
	\[
	G_n^{-1} u - G^{-1} u = G_n^{-1} (G - G_n) G^{-1} u \to 0
	\]
	since $G_n^{-1}$ are uniformly bounded. We thus have shown $G_n^{-1} \xrightarrow{\mathrm{SOT}} G^{-1}$ a.s.
	
	\textbf{Step 3.}
	Note that $\norm{\varphi_{\rhob}^\nu}_{H^2(B)} \le C \norm{\varphi_{\rhob}^\nu}_{C^2(\bar{B})}$, thus $\E \norm{\varphi_{\rhob}^\nu}_{H^2(B)}^2 < \infty$, and by the standard CLT in Hilbert spaces (see e.g.\ \cite[Theorem~10.5]{ledoux2013probability}) applied to $\{\varphi_i\}_{i \in \N}$ we obtain that
	\begin{equation}\label{eq:CLT_phi}
		\frac{S_n}{\sqrt{n}} \eqset \frac{1}{\sqrt{n}} \sum_{i = 1}^n \left(\varphi_i - \E \varphi_{\rhob}^\nu\right) \xrightarrow{d} \xi, \quad \xi \sim \mathcal{N}\left(0, \Var(\varphi_{\rhob}^\nu)\right).
	\end{equation}
	According to~\eqref{eq:char_linear},
	\[
	\sqrt{n} \left(\rhob_n - \rhob\right) = G_n^{-1} \frac{S_n}{\sqrt{n}}.
	\]
	Since $G_n^{-1}$ are uniformly bounded and $G_n^{-1} \xrightarrow{\mathrm{SOT}} G^{-1}$ a.s., Lemma~\ref{lem:Slutsky} yields the CLT for $\rhob_n$:
	\[
	\sqrt{n} \left(\rhob_n - \rhob\right) \xrightarrow{d} G^{-1} \xi \sim \mathcal{N}\left(0, G^{-1} \Var(\varphi_{\rhob}^\nu) G^{-1}\right).
	\]
\end{proof}

\begin{remark}[CLT in the discrete case] 
	Let us finally remark that the above delta-method can easily be adapted to the case where $\Omega$ is convex and bounded and $P$ is supported on a set of measures of the form $\sum_{j=1}^N \nu^j \delta_{x^j}$ with a lower bound on the mass of atoms $\nu^j \geq \eps$ and the distance between atoms $\norm{x^i - x^j} \ge \eps$ once $i \neq j$, for some $\eps > 0$. Of course, in this case, one cannot use the regularity theory for Monge--Amp\`ere and its linearization, but one can instead take advantage of the fine analysis of the semi-discrete case by Kitagawa, M\'erigot and Thibert \cite{kmt}. Indeed, in this case $\Phi^{\nu}(\rho)$ always take the form $x \mapsto \max\{x \cdot x^j - \psi^j\}$, but it follows from Theorem~5.1 in \cite{kmt} and the implicit function theorem that the dual variables $\psi^j$ depend in a $C^1$ way on $\rho \in C^{0,1}(\Omb)$ as well as an estimate of the form
	\[
	\norm*{(\Phi^\nu)'(\rho) u}_{L^2(\Omega)} \le C \sqrt{\sum_{j=1}^N u(V_j)^2}
	\]
	where the $V_j$'s are the Laguerre cells associated with $\rho$ and $\nu$. Note that the right hand side of this inequality depends on finitely many linear functionals of $u$, therefore $(\Phi^\nu)'(\rho)$ is compact in $L^2_\diamond(\Omega)$. Moreover, one can show that there is a uniform bound
	\[
	\norm*{(\Phi^\nu)'(\rho)}_{L^2_\diamond(\Omega)} \le C
	\]
	in the same way as in the proof of Theorem~5.1 in \cite{kmt}, using the Cheeger inequality for graphs together with the lower bounds on $\nu^j$ and $\norm{x^i - x^j}$, a relative isoperimetric inequality, and uniform bounds on $\rho$.
	Since $-(\Phi^{\nu})'(\rho)$ is Hermitian and nonnegative definite, we can argue as in the proof above invoking the Fredholm alternative theorem to invert the operators $G$ and $G_n$ in $L^2_\diamond(\Omega)$. This easily yields a CLT in $L^2_\diamond(\Omega)$ in this discrete setting. 
\end{remark}

%\label{sec-CLT}

\appendix
\section{Linearization of Monge--Amp\`ere equations}\label{sec-app}

We consider $\nu, \mu \in \left\{ \varrho \in \P_{\mathrm{ac}}(\bar B): \norm{\varrho}_{C^{0,\alpha}(\bar B)} + \norm{\log \varrho}_{L^\infty(\bar B)} < \infty \right\}$ on a closed ball $\bar B \eqset \overline{B_R(0)}$ of radius $R>0$ for $\alpha \in (0,1)$. Our goal is to linearize the following Monge--Amp\`ere equation with a second boundary value condition 
\begin{equation}\label{eq:MA_bounded}
\begin{array}{rcl}
\det(D^2 \varphi) \nu (\nabla \varphi) &=& \mu, \\
\nabla \varphi(\bar B) &=& \bar B,
\end{array}
\end{equation}
for some fixed $\nu \in C^{1,\alpha}(\bar B)$. Note that thanks to Brenier's theorem there exists a unique convex solution satisfying \eqref{eq:MA_bounded} (a priori in the sense of $\nabla \varphi_\# \mu = \nu$), and it is $C^{2,\alpha}(\bar B)$ thanks to regularity theory for Monge--Amp\`ere equations.
We will need the following lemmas.

\begin{lemma}\label{lem:diff_boundary_equiv}
	Let $\varphi \in C^1(\bar B)$ be strictly convex. Then the following are equivalent
	\begin{itemize}
	\item $\nabla \varphi(\bar B) = \bar B$,
	\item $\nabla \varphi(\partial B) \subset \partial B$.
	\end{itemize}
\end{lemma}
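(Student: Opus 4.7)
The plan is to exploit that $\nabla\varphi\colon\bar B\to\R^d$ is continuous (from $\varphi\in C^1$) and injective (strict convexity gives $(\nabla\varphi(x)-\nabla\varphi(y))\cdot(x-y)>0$ for $x\neq y$), and to combine these two facts with invariance of domain.

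\textbf{$(\Rightarrow)$} If $\nabla\varphi(\bar B)=\bar B$, then $\nabla\varphi\colon\bar B\to\bar B$ is a continuous bijection from a compact space to a Hausdorff one, hence a homeomorphism. Any homeomorphism of a topological manifold with boundary must send boundary to boundary, since interior points have open Euclidean neighborhoods while boundary points do not (by invariance of domain). Therefore $\nabla\varphi(\partial B)=\partial B$, which in particular lies inside $\partial B$.

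\textbf{$(\Leftarrow)$} I would split this direction into two steps.

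\emph{Step 1: $\nabla\varphi(\partial B)=\partial B$.} The restriction $\nabla\varphi|_{\partial B}$ is continuous and injective. For $d\geq 2$, $\partial B\cong S^{d-1}$ is a compact, connected $(d-1)$-manifold without boundary, so invariance of domain applied chartwise makes $\nabla\varphi(\partial B)$ open in $\partial B$, compactness of $\partial B$ makes it closed, and connectedness then forces it to be all of $\partial B$. For $d=1$, $\partial B=\{-R,R\}$ and strict monotonicity of $\varphi'$ forces $\varphi'(-R)=-R$ and $\varphi'(R)=R$.

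\emph{Step 2: $\nabla\varphi(B)=B$.} Injectivity combined with Step~1 yields $\nabla\varphi(B)\cap\partial B=\nabla\varphi(B)\cap\nabla\varphi(\partial B)=\emptyset$. Invariance of domain applied to the injective continuous map $\nabla\varphi\colon B\to\R^d$ makes $\nabla\varphi(B)$ open in $\R^d$, and it is connected since $B$ is. Therefore $\nabla\varphi(B)$ sits inside a single connected component of $\R^d\setminus\partial B$; being bounded (contained in the compact image $\nabla\varphi(\bar B)$), it must lie in the bounded component $B$. Furthermore, $\nabla\varphi(\bar B)=\nabla\varphi(B)\cup\nabla\varphi(\partial B)=\nabla\varphi(B)\cup\partial B$, and since $\nabla\varphi(B)\subset B$ this gives $\nabla\varphi(B)=\nabla\varphi(\bar B)\cap B$, which is closed in $B$. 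A non-empty, open, and closed subset of the connected set $B$ is all of $B$.

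Combining Steps~1 and~2 gives $\nabla\varphi(\bar B)=\nabla\varphi(B)\cup\nabla\varphi(\partial B)=B\cup\partial B=\bar B$. The only mildly delicate point is Step~1, which reduces either to elementary monotonicity (when $d=1$) or to invariance of domain on the sphere together with its connectedness (when $d\geq 2$); the rest is a topological unpacking of injectivity plus continuity.
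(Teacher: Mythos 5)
Your proof is essentially correct and does take a somewhat different route from the paper's. For the direction $\nabla\varphi(\bar B)=\bar B\Rightarrow\nabla\varphi(\partial B)\subset\partial B$, the paper gives a short, elementary argument that avoids any heavy topology: if $p\in\partial B$ had $\nabla\varphi(p)\in\mathring B$, surjectivity gives $q\in\bar B$ with $\nabla\varphi(q)=\nabla\varphi(p)+ap$ for small $a>0$, and then $(q-p)\cdot(\nabla\varphi(q)-\nabla\varphi(p))=a(q-p)\cdot p<0$ contradicts monotonicity of $\nabla\varphi$. Your version (compact-to-Hausdorff bijection is a homeomorphism, and homeomorphisms of a manifold with boundary preserve boundary by invariance of domain) is also valid, but it invokes more machinery than needed for this implication. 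For the converse, both proofs begin from the observation $\nabla\varphi(\partial B)=\partial B$ (the paper justifies it by the fact that the only subset of $S^{d-1}$ homeomorphic to $S^{d-1}$ is the whole sphere; your chartwise invariance-of-domain plus connectedness argument is a cleaner way to say the same thing, and you rightly handle $d=1$ separately). From there the paper uses the Legendre conjugate $\varphi^*\in C^1$ to see that $\nabla\varphi(\mathring B)$ lies in the interior of $\nabla\varphi(\bar B)$, then identifies $\partial\nabla\varphi(\bar B)=\partial B$ and concludes by the topological uniqueness of $\bar B$ among compact sets with nonempty interior and boundary $\partial B$; you replace all that with invariance of domain and a clopen argument on the components of $\R^d\setminus\partial B$. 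The two converses are cousins, but yours is more self-contained and avoids the slightly opaque appeal to ``the only compact set with that boundary.''

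One small logical slip in your Step~2: you assert that since $\nabla\varphi(B)$ is connected, bounded, and disjoint from $\partial B$, it ``must lie in the bounded component $B$.'' Boundedness alone does not force a connected set into the bounded component of $\R^d\setminus\partial B$; for example a small ball outside $\bar B$ is bounded, connected, and sits in the unbounded component. The correct order is: $\nabla\varphi(B)$ lies in some component $U$ of $\R^d\setminus\partial B$; since $\nabla\varphi(B)=\nabla\varphi(\bar B)\cap U$ is open and (as the trace on $U$ of a compact set) closed in $U$, it equals all of $U$; and only then does boundedness of $\nabla\varphi(B)$ rule out the unbounded component(s), forcing $U=B$. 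With the steps reordered this way the argument is complete; as written, the justification for $\nabla\varphi(B)\subset B$ is a non sequitur even though the conclusion is true.
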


\begin{proof}
	For the first direction assume by contradiction that there is $p \in \partial B$ such that $\nabla \varphi(p) \in \mathring B$. Note that at $p$ there is an outer normal to $\bar B$, namely $p$ itself. Take $a > 0$ such that $\nabla \varphi(p) + a p \in B$. Since $\nabla \varphi$ is surjective, there is $q \in \bar{B}$ satisfying $\nabla \varphi(q) = \nabla \varphi(p) + a p$. Then
	\[
	(q - p) \cdot (\nabla \varphi(q) - \nabla \varphi(p)) = a (q - p) \cdot p < 0,
	\]
	which contradicts the monotonicity of $\nabla \varphi$.
	
	For the other direction, note that $\nabla \varphi (\partial B) \subset \partial B$ implies $\nabla \varphi (\partial B) = \partial B$ since the only subset of $\partial B$ homeomorphic to $\partial B$ is $\partial B$ itself. 
	Now, by a similar argumentation as above one can obtain that $\nabla \varphi (\partial B) \subset \partial \nabla \varphi (\bar B)$.
	Furthermore, strict convexity of $\varphi$ yields that its conjugate $\varphi^* \in C^1(\R^d)$, and recall that $(\nabla \varphi^*)^{-1}\left(\{x\}\right) = \left\{\nabla \varphi(x)\right\}$ for any $x \in \mathring{B}$. Thus $\nabla \varphi$ maps $\mathring B$ to the interior of $\nabla \varphi (\bar{B})$. Therefore, $\partial \nabla \varphi (\bar B) = \nabla \varphi (\partial B) = \partial B$.
	Now, there is only one compact set in $\R^d$ with nonempty interior and boundary $\partial B$: $\bar B$, so that we have $\nabla \varphi(\bar B) = \bar B$.
\end{proof}

\begin{remark}
	Note that the proof of Lemma \ref{lem:diff_boundary_equiv} extends to the setting described in Remark \ref{rem:GenRegBdry}. Keeping the same notation the statement changes to equivalence of
	\begin{itemize}
	\item $\nabla \varphi(\bar \Omega) = \supp \nu$,
	\item $\nabla \varphi(\partial \Omega) \subset \partial \supp \nu$.
	\end{itemize}
	The contradicting argument reads in this case (since $\nabla H(p)$ is the outer normal at $p$)
	\[
	(q - p) \cdot (\nabla \varphi(q) - \nabla \varphi(p)) = a (q - p) \cdot \nabla H (p) < 0,
	\]
	where the inequality is strict due to the strong convexity.
\end{remark}

\begin{lemma}\label{lem:gradient_oblique}
For $\varphi \in C^{2}(\bar B)$ strongly convex such that $\norm{\nabla \varphi(x)}^2 - R^2 = 0$ for $x \in \partial B$, there is $\beta \in C(\partial B), \beta > 0$ such that $(D^2 \varphi)^{-1}(x) \cdot x = \beta(x) \nabla \varphi(x)$ for $x \in \partial B$. Futhermore, there exists $\kappa > 0$ such that $|\nabla \varphi(x) \cdot x| \geq \kappa$ for all $x \in \partial B$.
\end{lemma}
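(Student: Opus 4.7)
The plan is to extract the direction of $D^2\varphi(x)\nabla\varphi(x)$ at boundary points by differentiating the eikonal-type identity $\|\nabla\varphi\|^2 = R^2$ tangentially along $\partial B$. For any tangent vector $\tau$ to $\partial B$ at $x$ (equivalently $\tau\cdot x = 0$), picking a smooth curve $\gamma$ in $\partial B$ with $\gamma(0) = x$, $\gamma'(0) = \tau$ and differentiating $\|\nabla\varphi(\gamma(t))\|^2 = R^2$ at $t=0$ yields
\[
\bigl(D^2\varphi(x)\nabla\varphi(x)\bigr)\cdot\tau = 0.
\]
Thus $D^2\varphi(x)\nabla\varphi(x)$ is orthogonal to the tangent plane and hence parallel to the normal direction $x$: there exists a scalar $\alpha(x)$ with $D^2\varphi(x)\nabla\varphi(x) = \alpha(x)\,x$. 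Since $D^2\varphi(x)$ is invertible by strong convexity, this rearranges to $[D^2\varphi(x)]^{-1} x = \alpha(x)^{-1}\nabla\varphi(x)$, so the first claim will hold with $\beta(x) \eqset 1/\alpha(x)$ once $\alpha$ is shown to be continuous and strictly positive on $\partial B$.

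The decisive step is the strict positivity of $\alpha$. By Lemma~\ref{lem:diff_boundary_equiv}, $\nabla\varphi(\bar B) = \bar B$, so the function $h(y) \eqset \|\nabla\varphi(y)\|^2$ satisfies $h \le R^2$ on $\bar B$ with equality precisely on $\partial B$. The outward normal derivative of $h$ at $x \in \partial B$ (outward unit normal $x/R$) is therefore nonnegative:
\[
0 \le \partial_\nu h(x) = \frac{2}{R}\bigl(D^2\varphi(x)\nabla\varphi(x)\bigr)\cdot x = 2R\,\alpha(x),
\]
hence $\alpha(x) \ge 0$. To exclude $\alpha(x) = 0$, I take the inner product of $D^2\varphi(x)\nabla\varphi(x)=\alpha(x)x$ with $\nabla\varphi(x)$, giving
\[
\nabla\varphi(x)^T D^2\varphi(x)\nabla\varphi(x) = \alpha(x)\bigl(\nabla\varphi(x)\cdot x\bigr).
\]
Strong convexity furnishes $m>0$ with $D^2\varphi \ge m I$ on $\bar B$, so together with $\|\nabla\varphi(x)\| = R$ the left-hand side is at least $m R^2 > 0$. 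Combined with $\alpha(x) \ge 0$, this forces $\alpha(x) > 0$ and, as a by-product, $\nabla\varphi(x)\cdot x > 0$ on $\partial B$.

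The remaining assertions follow from compactness. The map $x \mapsto \alpha(x) = R^{-2}\bigl(D^2\varphi(x)\nabla\varphi(x)\bigr)\cdot x$ is continuous on $\partial B$ because $\varphi\in C^2(\bar B)$; strict positivity on the compact set $\partial B$ then gives $\alpha \ge \alpha_0 > 0$, whence $\beta = 1/\alpha \in C(\partial B)$. For the quantitative bound on $|\nabla\varphi(x)\cdot x|$, setting $M \eqset \sup_{\bar B}\norm{D^2\varphi}_{\mathrm{op}}$ (finite by $C^2$ regularity) and taking norms in $D^2\varphi(x)\nabla\varphi(x)=\alpha(x) x$ yields $\alpha(x) \le M$, and the identity above gives $\nabla\varphi(x)\cdot x \ge m R^2/M$, so $\kappa \eqset m R^2/M$ works. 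The only delicate point in the whole argument is the one-sided boundary derivative step, which is legitimate because $h \in C^1(\bar B)$ attains its maximum on $\partial B$; beyond that the proof reduces to elementary linear algebra plus strong convexity and compactness.
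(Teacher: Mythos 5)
Your proof is correct and takes a genuinely different route from the paper's, which works on the dual side. The paper differentiates $\norm{\nabla\varphi^*}^2 = R^2$ along $\partial B$ to get $D^2\varphi^*(y)\nabla\varphi^*(y) = \tilde\beta(y)\,y$, obtains $\tilde\beta\ge 0$ from the same one-sided normal-derivative observation you use, and $\tilde\beta>0$ directly from invertibility of $D^2\varphi^*(y)$ (if $\tilde\beta(y)=0$ then $\nabla\varphi^*(y)=0$, contradicting $\norm{\nabla\varphi^*(y)}=R$); substituting $y=\nabla\varphi(x)$ and using $D^2\varphi^*(\nabla\varphi(x))=(D^2\varphi(x))^{-1}$, $\nabla\varphi^*(\nabla\varphi(x))=x$ then gives the stated identity with $\beta(x)=\tilde\beta(\nabla\varphi(x))$ in one step, with no division. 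You differentiate on the primal side, obtain the reciprocal relation $D^2\varphi(x)\nabla\varphi(x)=\alpha(x)\,x$, and must therefore show $\alpha>0$ before setting $\beta=1/\alpha$; your strict-positivity step (inner product with $\nabla\varphi$ plus strong convexity) is a little heavier than the paper's invertibility one-liner, but it pays off by immediately producing the quantitative bound $\nabla\varphi(x)\cdot x\ge mR^2/M$, which is exactly the lemma's second claim and coincides, up to notation, with the paper's $R^2K$ where $K$ depends on $\norm{D^2\varphi}_{L^\infty}$ and $\norm{(D^2\varphi)^{-1}}_{L^\infty}$. In short, your primal argument avoids introducing $\varphi^*$ altogether and gets the $\kappa$-bound as a by-product, while the paper's dual argument reaches the stated formula more directly.
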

\begin{proof}
Note that the Legendre transform $\varphi^*$ is at least $C^2(\bar B)$. Indeed, by standard regularity theory for convex functions $\varphi^* \in C^1(\bar B)$ and since $D^2\varphi$ is invertible, the inverse function theorem applied to $\nabla \varphi$ yields differentiability for $\nabla \varphi^* = \left( \nabla \varphi \right)^{-1}$.
Now note that $\nabla \varphi^*$ also satisfies (see Lemma~\ref{lem:diff_boundary_equiv})
\begin{align*}
	\norm{\nabla \varphi^* (y)}^2 - R^2 &\leq 0 \, \text{~ for~all~} y \in  B \\
	\norm{\nabla \varphi^* (y)}^2 - R^2 &= 0 \, \text{~ for~all~} y \in \partial B.
\end{align*}
This implies by differentiating at a boundary point $y$ that there is $\tilde\beta(y)\geq 0$ such that
\begin{align*}
D^2 \varphi^*(y) \nabla \varphi^*(y) = \tilde\beta(y)y.
\end{align*}
By invertibility of $D^2 \varphi^*(y)$, we see that $\tilde\beta(y)> 0.$
Substituting $\nabla \varphi(x) = y$ gives by using properties of Legendre transform
\begin{equation*}
\begin{array}{lrcl}
&D^2 \varphi^*(\nabla \varphi(x)) \nabla \varphi^*(\nabla \varphi(x)) &=& \tilde\beta(\nabla \varphi(x)) \nabla \varphi(x), \\
\Longleftrightarrow &(D^2 \varphi)^{-1}(x) x &=& \tilde\beta(\nabla \varphi(x))\nabla \varphi(x),
\end{array}
\end{equation*}
for all $x \in \partial B.$ Set $\beta(x) \eqset \tilde\beta(\nabla \varphi(x))$ and note that $\beta \in C(\partial B)$ since
\begin{equation*}
\beta(x) = \frac{1}{R^2} (D^2 \varphi)^{-1}(x)x \cdot \nabla \varphi(x).
\end{equation*}
The second statement follows since
\begin{align*}
|\nabla \varphi(x) \cdot x| = \frac{1}{\beta(x)} (D^2 \varphi)^{-1}(x)x \cdot x \geq R^2 K > 0,
\end{align*}
where $K$ is a constant only depending on $\norm{D^2 \varphi}_{L^\infty(B)}$ and $\norm{(D^2 \varphi)^{-1}}_{L^\infty(B)}.$
\end{proof}

\begin{remark}
One may readily check that Lemma \ref{lem:gradient_oblique} extends to the setting described in Remark \ref{rem:GenRegBdry}. Indeed, the proof uses that the defining convex functions of the ball are of the form $\norm{\cdot}^2 - R^2$ and the outer normal at a boundary point $x$ is $x$.
\end{remark}

Form now on, we fix the constant by considering potentials in the set 
\[C^{k,\alpha}_\diamond (\bar B) \eqset \left\{\varphi \in C^{k,\alpha}(\bar B): \int_B \varphi = 0 \right\}\]
with  $k \in \N$. Let us also define 
\[\mathcal{M} = \left\{ \varphi \in C^{2,\alpha}_\diamond(\bar B): \norm{\nabla \varphi}^2 - R^2 = 0 \text{~on~} \partial B\right\}.\]
We now prove that in a neighborhood of a strongly convex function $\varphi_0 \in \cM$ this set is the graph of a $C^1$-function.

\begin{lemma}\label{lem:M_loc_manif}
At $\varphi_0 \in \cM$ strongly convex, $\cM$ is locally given by the image of a bijective $C^1$-function on a closed subspace of $C_\diamond^{2,\alpha}(\bar B)$. More precisely, there exist open subsets $V \subset F_0 \eqset \left\{h \in C_\diamond^{2,\alpha}(\bar B): \nabla \varphi_0 \cdot \nabla h  = 0 \mathrm{~on~} \partial B \right\}$, $U \subset C_\diamond^{2,\alpha}(\bar B),$ with $\varphi_0 \in U,$ and a bijective $C^1$-function:
\begin{equation}
\chi_0 \colon V \rightarrow U \cap \cM.
\end{equation}
 
Furthermore, for $f_0 \eqset \Pi_{F_0}(\varphi_0),$ where $\Pi_{F_0}$ is the projection on $F_0$ defined by \eqref{eq:decomp_F}, it holds $\chi_0'(f_0) = \id.$
\end{lemma}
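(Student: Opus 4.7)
The plan is to view $\cM$ as the zero set of the smooth boundary constraint $G\colon \varphi \mapsto (\norm{\nabla \varphi}^2 - R^2)\big|_{\partial B}$, regarded as a map from $C^{2,\alpha}_\diamond(\bar B)$ into $C^{1,\alpha}(\partial B)$, and then to apply the Banach implicit function theorem near $\varphi_0$. A direct computation gives $G'(\varphi_0) h = 2\, \nabla \varphi_0 \cdot \nabla h \big|_{\partial B}$, so $F_0 = \ker G'(\varphi_0)$ by definition.

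The heart of the argument is to split $C^{2,\alpha}_\diamond(\bar B) = F_0 \oplus E_0$ in such a way that $G'(\varphi_0)|_{E_0}\colon E_0 \to C^{1,\alpha}(\partial B)$ is a Banach-space isomorphism. To produce such a splitting I would construct a bounded right inverse $L\colon C^{1,\alpha}(\partial B) \to C^{2,\alpha}_\diamond(\bar B)$ of $G'(\varphi_0)$ by solving, for each $g \in C^{1,\alpha}(\partial B)$, the oblique boundary value problem
\[
\Delta h - h = 0 \text{ in } B, \qquad 2\, \nabla \varphi_0 \cdot \nabla h = g \text{ on } \partial B,
\]
and then subtracting the mean so that the result lies in $C^{2,\alpha}_\diamond(\bar B)$. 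The obliqueness condition needed to invoke classical Schauder theory is precisely the quantitative non-tangency $\abs{\nabla \varphi_0 \cdot n} \ge \kappa / R > 0$ on $\partial B$ established in Lemma~\ref{lem:gradient_oblique}; this is the key (and in my view main) analytic input of the proof. Setting $E_0 \eqset L\left(C^{1,\alpha}(\partial B)\right)$, the identity $G'(\varphi_0) \circ L = \id$ forces $F_0 \cap E_0 = \{0\}$, while the continuous projector $\Pi_{F_0}(\varphi) \eqset \varphi - L\bigl(G'(\varphi_0) \varphi\bigr)$ together with $\id - \Pi_{F_0}$ realizes the topological decomposition~\eqref{eq:decomp_F}.

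With the splitting in hand, write $\varphi_0 = f_0 + e_0$ with $f_0 = \Pi_{F_0}(\varphi_0) \in F_0$ and $e_0 \in E_0$, and consider the smooth map $(f,e) \mapsto G(f+e)$. Its partial derivative with respect to $e$ at $(f_0,e_0)$ coincides with $G'(\varphi_0)|_{E_0}$ and is an isomorphism onto $C^{1,\alpha}(\partial B)$ by construction, so the Banach implicit function theorem yields an open neighborhood $V \subset F_0$ of $f_0$ and a $C^1$ map $e\colon V \to E_0$ with $e(f_0) = e_0$ and $G(f + e(f)) = 0$ for all $f \in V$. Define
\[
\chi_0(f) \eqset f + e(f),
\]
which is a $C^1$ diffeomorphism from $V$ onto a neighborhood $U \cap \cM$ of $\varphi_0$ in $\cM$, the local inverse being $\varphi \mapsto \Pi_{F_0}(\varphi)$.

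To conclude, differentiating the identity $G\bigl(f + e(f)\bigr) = 0$ at $f_0$ gives $G'(\varphi_0)\bigl(h + e'(f_0) h\bigr) = 0$ for every $h \in F_0$. Since $G'(\varphi_0) h = 0$ and $e'(f_0) h$ belongs to $E_0$, on which $G'(\varphi_0)$ is injective, we get $e'(f_0) = 0$, hence $\chi_0'(f_0) h = h$ as required. The only non-routine ingredient throughout is the existence and boundedness of $L$, which is exactly where Lemma~\ref{lem:gradient_oblique} enters.
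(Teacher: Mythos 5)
Your proposal is correct and uses the same overall strategy as the paper: reformulate $\cM$ as the zero level set of the boundary constraint $G(\varphi)=(\norm{\nabla\varphi}^2-R^2)|_{\partial B}$, exploit the obliqueness supplied by Lemma~\ref{lem:gradient_oblique} to split $C^{2,\alpha}_\diamond(\bar B)$ into $F_0=\ker G'(\varphi_0)$ plus a topological complement, and then invoke the Banach implicit function theorem. Where you genuinely diverge is in the choice of complement. The paper takes $G_0=\{g\in C^{2,\alpha}_\diamond(\bar B): -\div(A_0\nabla g)=c \text{ for some constant } c\}$ with $A_0=\cof(D^2\varphi_0)$, i.e.\ the kernel of the \emph{linearized Monge--Amp\`ere operator} with the oblique boundary condition removed, and produces the two components of $\varphi$ by solving the pair of oblique BVPs \eqref{eq:decomp_F}--\eqref{eq:decomp_G}. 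You instead build an explicit bounded right inverse $L$ of $G'(\varphi_0)$ by solving the generic uniformly elliptic problem $\Delta h-h=0$ with the same oblique boundary data, and take $E_0=\mathrm{ran}\,L$; the resulting projector $\varphi\mapsto\varphi-L(G'(\varphi_0)\varphi)$ is bounded and idempotent with range $F_0$ and kernel $E_0$, so it does furnish a topological direct sum. Both routes require nothing beyond Lemma~\ref{lem:gradient_oblique} and linear Schauder theory, and both give a $C^1$ chart with $\chi_0'(f_0)=\id$ (the downstream Theorem~\ref{thm:DiffMAmapping} only uses this derivative identity, not the specific complement). The one place your write-up is loose is in identifying your projector with ``the projection defined by~\eqref{eq:decomp_F}'': your $\Pi_{F_0}$ and hence your $f_0$ differ from the paper's, since you chose a different complement; each is the projector for its own splitting. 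This does not affect the validity of the lemma, but if one wants the chart to be \emph{the} one the paper refers to later, one should keep the MA-adapted complement $G_0$. Your version is slightly more elementary (any uniformly elliptic generator works for $L$), while the paper's choice is thematically tied to the Monge--Amp\`ere linearization $-\div(A_0\nabla\,\cdot)$ that recurs throughout the appendix.
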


\begin{proof}
First, we show that $C^{2,\alpha}_\diamond(\bar B) = F_0 \oplus G_0$, where $F_0, G_0$ are linear subspaces defined as
\begin{align*}
F_0&\eqset \left\{ f \in C_\diamond^{2,\alpha}(\bar B): \nabla \varphi_0 \cdot \nabla f  = 0 \mathrm{~on~} \partial B\right\}, \\
G_0&\eqset \left\{ g \in C_\diamond^{2,\alpha}(\bar B): \exists c \in \R, -\div (A_0 \nabla g) = c \right\},
\end{align*}
where $A_0 = \cof(D^2 \varphi_0)$ is the cofactor matrix  of $D^2 \varphi_0.$ Here and in the following, $-\div (A_0 \nabla g) = c$ is to be understood in the distributional sense, i.e. for all $\psi \in C^\infty_c(B)$ such that $\int_B \psi = 0$
\[
\int_B A_0 \nabla g \cdot \nabla \psi = 0.
\]
Take $\varphi \in C_\diamond^{2, \alpha}(\bar B)$. 
Define $f$ to be a solution of
\begin{equation}\label{eq:decomp_F}
\begin{array}{rcll}
- \div (A_0 \nabla f) &=& - \div (A_0 \nabla \varphi) + \fint_B \div (A_0 \nabla \varphi) &\mathrm{~in~} B,\\
\nabla \varphi_0 \cdot \nabla f &=& 0 &\mathrm{~on~} \partial B,
\end{array}
\end{equation}
and $g$ a solution of
\begin{equation}\label{eq:decomp_G}
\begin{array}{rcll}
- \div (A_0 \nabla g) &=& -  \fint_B \div (A_0 \nabla \varphi) &\mathrm{~in~} B,\\
\nabla \varphi_0 \cdot \nabla g &=& \nabla \varphi_0 \cdot \nabla \varphi  &\mathrm{~on~} \partial B.
\end{array}
\end{equation}
Thanks to Lemma \ref{lem:gradient_oblique} the boundary conditions are uniformly oblique and compatible with the right hand side. Hence, both \eqref{eq:decomp_F} and \eqref{eq:decomp_G} admit a unique weak solution $f,g \in H^1_\diamond (B)$ which is $C_\diamond^{2,\alpha}(\bar B)$ thanks to linear elliptic PDE theory (e.g. by a combination of \cite[Theorem 5.54]{lieberman2013oblique} and \cite[Theorem 6.31]{gilbarg2015elliptic}). 
Thus, we have found a decomposition $\varphi = f + g$ for $f \in F_0$ and $g \in G_0.$ It is also unique because $F_0 \cap G_0 = \{0\}$. To see that notice that every $h \in F_0 \cap G_0$ satisfies
\begin{equation}\nonumber
\begin{array}{rcll}
- \div (A_0 \nabla h) &=& c &\mathrm{~in~} B, \mathrm{~ c \in~} \R\\
\nabla \varphi_0 \cdot \nabla h &=& 0  &\mathrm{~on~} \partial B,
\end{array}
\end{equation} whose unique solution is $h=0$.
In total, we obtain well-definedness of the projection operators $\Pi_{F_0}\colon C^{2,\alpha}(\bar B) \rightarrow F_0$ and $\Pi_{G_0}\colon C^{2,\alpha}(\bar B) \rightarrow G_0$.
Continuity of $\Pi_{F_0}$ and $\Pi_{G_0}$ in $C^{2,\alpha}(\bar B)$ follows by the open mapping theorem, see e.g.~\cite[Theorem~2.10]{brezis2010functional}.

Now we would like to apply the implicit function theorem to
\begin{align*}
\Gamma\colon F_0 \oplus G_0 &\rightarrow C^{1, \alpha}(\partial B), \\
(f,g) &\mapsto \norm{\nabla (f+g)}^2 - R^2.
\end{align*} 
Its partial derivative at $\varphi_0$ with respect to $g_0 \eqset \Pi_{G_0}(\varphi_0)$ is given by
\begin{align*}
\frac{\partial}{\partial g} \Gamma(\varphi_0)h = 2 \nabla \varphi_0 \cdot \nabla h \mathrm{~on~} \partial B, \quad h \in G_0.
\end{align*}
Bijectivity of the derivative means existence and uniqueness of $h \in C_\diamond^{2,\alpha}(\bar B)$ such that
\begin{equation}
\begin{array}{rcl}
-\div (A_0 \nabla h) &=& c \mathrm{~on~} B, \mathrm{~ c \in~} \R,\\
\nabla \varphi_0 \cdot \nabla h &=& w \mathrm{~on~} \partial B,
\end{array}
\end{equation}
where $w \in C^{1,\alpha}(\partial B)$. By the same argumentation as above, this is the case if and only if $c = - \frac{1}{|B|} \int_{\partial B} w \det(D^2 \varphi_0) \beta_0$ where $\beta_0$ is as in Lemma \ref{lem:gradient_oblique}. Continuity follows again by the open mapping theorem.
Thanks to the implicit function theorem there are $U_F \subset F_0, U_G \subset G_0$ open ($\Pi_F(\varphi_0) \in U_F$, resp. $\Pi_G(\varphi_0) \in U_G$) such that $\tilde \chi_0\colon U_F \rightarrow U_G$ is $C^1$ and
\begin{align*}
\Gamma(f,g) = 0 \mathrm{~ for ~} (f,g) \in U_F \oplus U_G \Longleftrightarrow g = \tilde \chi_0(f).
\end{align*}
This implies that $\chi_0\colon U_F \rightarrow \cM \cap U_F \oplus U_G, f \mapsto f + \tilde \chi_0(f)$ is well-defined, $C^1$ and bijective. 

Finally, note that for $f_0 \eqset \Pi_{F_0}(\varphi_0),$ $h \in F_0$
\begin{align*}
0 = \frac{\d}{\d f} \Gamma(f_0, \tilde \chi_0(f_0))h = 2 \underbrace{\nabla \varphi_0 \cdot \nabla h}_{=0} + \frac{\partial}{\partial g}\Gamma(f_0, \tilde \chi_0(f_0)) \tilde \chi_0'(f_0)h.
\end{align*}
By invertibility of $\frac{\partial}{\partial g}\Gamma(f_0, \chi_0(f_0))$, we conclude $\tilde \chi_0'(f_0) = 0,$ hence $\chi_0'(f_0) = \id.$
\end{proof}
Now for $\varphi_0 \in \cM$ take $U \subset C_\diamond^{2,\alpha}(\bar B)$ from Lemma \ref{lem:M_loc_manif} (and possibly restrict it further such that any $\varphi \in U \cap \cM$ is strongly convex) and consider the map
\begin{equation}
\begin{aligned}
M_\nu \colon &U \cap \cM \rightarrow \left\{ u \in C^{0,\alpha}(\bar B): \int_B u = 1 \right\} \\
&\varphi \mapsto \det(D^2 \varphi)\nu(\nabla \varphi)
\end{aligned}
\end{equation}
where $\nu$ is a fixed probability density in the set $\mathcal{Q}$ defined in \eqref{ass_reg_P} with $k=1$. Note that this map is well-defined by Lemma~\ref{lem:diff_boundary_equiv}
and the fact that the push forward preserves the mass.
We want to ``take the derivative at $\varphi \in U \cap \cM$'' by pulling back $M_{\nu}$ to the linear space $F_0$ with the map $\chi_0$ from Lemma \ref{lem:M_loc_manif}.
\begin{proposition}\label{thm:lin_MA}
In the setting of Lemma \ref{lem:M_loc_manif}, let $\varphi \in U \cap \cM$ be strongly convex. Then $N_\nu \eqset M_\nu \circ \chi_0$ is continuously differentiable at $f \eqset \Pi_{F_0}\varphi$ and the derivative is given by
\begin{equation}
\begin{aligned}
N_\nu'(f) \colon F_0 &\rightarrow C_\diamond^{0,\alpha}(\bar B) \\
h &\mapsto \tr( A_\nu D^2 (\chi_0'(f)h)) +\det(D^2 \varphi)\nabla\nu(\nabla \varphi)\cdot\nabla (\chi_0'(f)h),
\end{aligned}
\end{equation}
where $F_0 = \left\{h \in C_\diamond^{2,\alpha}(\bar B): \nabla \varphi_0 \cdot \nabla h  = 0 \mathrm{~on~} \partial B \right\}$ \\
 and $A_\nu \eqset \nu(\nabla \varphi) \cof(D^2\varphi)$. \\
In addition, in the weak sense we have
$$N_\nu'(f)h = \div(A_\nu \nabla (\chi_0'(f)h)).$$
\end{proposition}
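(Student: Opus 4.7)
The plan is a chain rule: by Lemma~\ref{lem:M_loc_manif}, $\chi_0$ is $C^1$ on a neighborhood of $f$ in $F_0$ with values in $C^{2,\alpha}_\diamond(\bar B)$, so it suffices to prove that $M_\nu$ is continuously differentiable at $\varphi = \chi_0(f)$ and to identify $M_\nu'(\varphi)$; then $N_\nu'(f) h = M_\nu'(\varphi)[\chi_0'(f) h]$ will give the stated formula, and continuity of $N_\nu'$ near $f$ follows from the corresponding properties of $M_\nu'$ and $\chi_0'$.

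First I would compute $M_\nu'(\varphi)$ pointwise. Since $\varphi$ is strongly convex, $\det(D^2\varphi)$ and $\cof(D^2\varphi)$ are smooth functions of the entries of $D^2\varphi$ in a $C^{2,\alpha}$-neighborhood of $\varphi$, so Jacobi's formula yields $\frac{d}{dt}\big|_{t=0}\det(D^2(\varphi+tk)) = \tr(\cof(D^2\varphi) D^2 k)$, while the chain rule gives $\frac{d}{dt}\big|_{t=0}\nu(\nabla(\varphi+tk)) = \nabla\nu(\nabla\varphi)\cdot\nabla k$. The product rule then delivers the pointwise expression $M_\nu'(\varphi)[k] = \tr(A_\nu D^2 k) + \det(D^2\varphi)\,\nabla\nu(\nabla\varphi)\cdot\nabla k$, and the assumption $\nu \in C^{1,\alpha}(\bar B)$ together with the algebra structure of H\"older spaces ensures that $\varphi \mapsto M_\nu'(\varphi)$ is continuous from (a neighborhood of $\varphi$ in) $C^{2,\alpha}_\diamond(\bar B)$ into $\mathcal{L}(C^{2,\alpha}_\diamond(\bar B), C^{0,\alpha}(\bar B))$.

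For the divergence form, I would expand $\div(A_\nu \nabla k) = \div(\nu(\nabla\varphi)\cof(D^2\varphi)\nabla k)$ via the product rule. The two ingredients are the Piola identity $\div(\cof(D^2\varphi)) = 0$ (row-wise, which holds in the distributional sense under our $C^{2,\alpha}$ regularity, so that $\div(\cof(D^2\varphi)\nabla k) = \tr(\cof(D^2\varphi)D^2 k)$), and the algebraic identity $D^2\varphi\,\cof(D^2\varphi) = \det(D^2\varphi)\,I$ combined with $\nabla(\nu(\nabla\varphi)) = D^2\varphi\,\nabla\nu(\nabla\varphi)$, which converts the remaining term into $\det(D^2\varphi)\,\nabla\nu(\nabla\varphi)\cdot\nabla k$. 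The two contributions reproduce precisely $M_\nu'(\varphi)[k]$, so $M_\nu'(\varphi)[k] = \div(A_\nu\nabla k)$ weakly, and hence $N_\nu'(f)h = \div(A_\nu \nabla(\chi_0'(f)h))$.

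Finally, I would verify that the image actually sits inside $C^{0,\alpha}_\diamond(\bar B)$. For $\tilde f$ close to $f$ in $F_0$, the potential $\chi_0(\tilde f)$ lies in $\cM$ and is strongly convex (the set of strongly convex elements is open, and we may shrink $U$ to ensure this), so by Lemma~\ref{lem:diff_boundary_equiv}, $\nabla\chi_0(\tilde f)$ is a $C^{1,\alpha}$-diffeomorphism of $\bar B$ onto itself. The change of variable formula then gives $\int_B M_\nu(\chi_0(\tilde f)) = \int_B \nu = 1$ for every such $\tilde f$, and differentiating in $\tilde f$ at $f$ in a direction $h \in F_0$ yields $\int_B N_\nu'(f) h = 0$. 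The only nontrivial ingredient in the whole argument is the Piola identity (together with its compatibility with the current H\"older regularity); the rest is a routine chain rule plus change-of-variables computation.
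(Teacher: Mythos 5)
Your proposal is correct and follows essentially the same route as the paper: you compute $M_\nu'$ pointwise via Jacobi's formula and the chain rule, then pass to the divergence form using the Piola identity and the cofactor--determinant relation, handling the limited $C^{2,\alpha}$ regularity by reading the Piola identity distributionally (the paper instead proves the identity for $\varphi \in C^{3,\alpha}(\bar B)$ and passes to $C^{2,\alpha}(\bar B)$ by density, an equivalent device). Your added change-of-variables verification that the derivative lands in the zero-mean subspace $C^{0,\alpha}_\diamond(\bar B)$ is a correct detail that the paper leaves implicit.
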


\begin{proof}
Let $h \in F_0$, then the directional derivative is given by
\begin{align*}
\frac{\diff}{\diff t}N_\nu(f+th)_{|t = 0}= &\frac{\diff}{\diff t}\det(D^2 \chi_0(f + th))\nu(\nabla \chi_0(f + th))_{|t = 0} \\
= &~ \tr( A_\nu D^2 (\chi_0'(f)h)) +\det(D^2 \varphi)\nabla\nu(\nabla \varphi)\cdot\nabla (\chi_0'(f)h).
\end{align*}
By continuity of $\chi'_0$, we can conclude that $N_\nu$ is continuously differentiable.
Now note that if $\varphi \in C^{3,\alpha}(\bar B)$
\begin{align*}
\div (A_\nu \nabla (\chi_0'(f)h)) 
=&~\div (\nu(\nabla \varphi)\cof(D^2 \varphi)\nabla (\chi_0'(f)h)) \\
=&~D^2 \varphi\nabla \nu(\nabla \varphi) \cdot \cof(D^2 \varphi)\nabla (\chi_0'(f)h)\\
& + \nu(\nabla \varphi) \div(\cof(D^2 \varphi)\nabla (\chi_0'(f)h)) \\
=&~ \det(D^2 \varphi) \nabla \nu(\nabla \varphi) \cdot \nabla (\chi_0'(f)h)\\
& + \nu(\nabla \varphi) \tr \left( \cof(D^2 \varphi) D^2 (\chi_0'(f)h)\right),
\end{align*}
where, in the last line, we have used that $\cof(D^2 \varphi)$ has divergence-free columns (see Lemma p.462 in \cite{evans2010partial}).
This yields
\begin{align*}
N_\nu'(f)h 
= \div (A_\nu \nabla (\chi_0'(f)h)).
\end{align*}
The same result follows for $\varphi \in C^{2,\alpha}(\bar B)$ (in the weak sense) by density.
\end{proof}

For fixed $\nu\in \mathcal{Q}$, consider now 
\[\mathcal{S} = \left\{ \varrho \in \P_{\mathrm{ac}}(\bar B): \norm{\varrho}_{C^{0,\alpha}(\bar B)} + \norm{\log \varrho}_{L^\infty(\bar B)} < \infty \right\}\]
and the map
\begin{equation}\label{eq_def:measToPotential}
\begin{array}{rll}
\Phi^\nu\colon \mathcal{S} &\rightarrow& \cM, \\
\mu &\rightarrow& \varphi, \text{ where } \varphi \text{ strongly convex and }\nabla \varphi_{\#}\mu = \nu.
\end{array}
\end{equation}
Note that this is well defined thanks to Brenier's theorem (Theorem~2.12 (ii) \cite{Villani2}) and regularity theory for Monge--Amp\`ere equations (Theorem~3.3 \cite{defig2014}). Furthermore, by the considerations before we can now prove that it is continuously differentiable.

\begin{thm}\label{thm:DiffMAmapping}
$\Phi^\nu$ as defined in \eqref{eq_def:measToPotential} is continuously differentiable.  More precisely, for every $\mu \in \mathcal{S}$, the value of $(\Phi^\nu)'(\mu)f$ at $f \in C_\diamond^{0,\alpha}(\bar B)$ is the unique solution $h \in C_\diamond^{2,\alpha}(\bar B)$ of the linearized equation
\begin{equation}\nonumber
\begin{aligned}
\div(A_\nu \nabla h) &= f \, \mathrm{~in~} \, B,\\
\nabla \varphi_0 \cdot \nabla h &= 0 \, \mathrm{~on~} \, \partial B,
\end{aligned}
\end{equation}
where $\varphi_0 = \Phi^\nu(\mu)$ and $A_\nu = \nu(\nabla \varphi_0) \cof(D^2 \varphi_0).$
\end{thm}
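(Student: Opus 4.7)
My plan is to apply the inverse function theorem in Banach spaces to the composition $N_\nu = M_\nu \circ \chi_0$ from Proposition~\ref{thm:lin_MA}. First I would fix $\mu_0 \in \mathcal{S}$ and set $\varphi_0 \eqset \Phi^\nu(\mu_0)$; by Brenier's theorem and Caffarelli's regularity theory, $\varphi_0 \in \mathcal{M}$ is strongly convex and of class $C^{2,\alpha}(\bar B)$. Then I would invoke the chart $\chi_0\colon V \to U \cap \mathcal{M}$ provided by Lemma~\ref{lem:M_loc_manif}, with distinguished point $f_0 \eqset \Pi_{F_0}(\varphi_0)$ satisfying $\chi_0'(f_0) = \id$. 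Since $\Phi^\nu$ is by construction the local inverse of $M_\nu$ on $U \cap \mathcal{M}$, we have $\Phi^\nu = \chi_0 \circ N_\nu^{-1}$ near $\mu_0$, and it will suffice to show that $N_\nu'(f_0)\colon F_0 \to C_\diamond^{0,\alpha}(\bar B)$ is a continuous isomorphism.

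By Proposition~\ref{thm:lin_MA} and $\chi_0'(f_0) = \id$, this derivative acts (in the weak sense) as
\[
N_\nu'(f_0) h = \div(A_\nu \nabla h), \qquad h \in F_0,
\]
with $A_\nu \eqset \nu(\nabla \varphi_0) \cof(D^2 \varphi_0)$. The next step is therefore to prove that, for any $g \in C_\diamond^{0,\alpha}(\bar B)$, the oblique boundary value problem
\[
\div(A_\nu \nabla h) = g \text{ in } B, \qquad \nabla \varphi_0 \cdot \nabla h = 0 \text{ on } \partial B,
\]
admits a unique solution $h \in C_\diamond^{2,\alpha}(\bar B)$. The matrix $A_\nu$ is uniformly elliptic, since $\nu$ is bounded away from zero by \eqref{ass_reg_P} and $\cof(D^2 \varphi_0)$ is positive definite by strong convexity of $\varphi_0$; the boundary condition is uniformly oblique thanks to Lemma~\ref{lem:gradient_oblique}; and the compatibility condition, obtained by testing the equation against the constant $1$, is precisely $\int_B g = 0$, which is encoded in the choice $g \in C_\diamond^{0,\alpha}$. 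Existence of a weak $H^1(B)/\R$-solution would then follow from a Lax--Milgram argument, and Schauder boundary estimates for oblique problems (combining Lieberman's $C^{1,\alpha}$-regularity \cite[Theorem~5.54]{lieberman2013oblique} with Gilbarg--Trudinger's Schauder theory \cite[Theorem~6.31]{gilbarg2015elliptic}) would upgrade the regularity to $C^{2,\alpha}(\bar B)$, exactly as in the proof of Lemma~\ref{lem:M_loc_manif}.

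Once invertibility is in hand, the open mapping theorem yields continuity of $(N_\nu'(f_0))^{-1}$, so the inverse function theorem gives $N_\nu^{-1} \in C^1$ in a neighborhood of $\mu_0$, and $\Phi^\nu = \chi_0 \circ N_\nu^{-1}$ inherits continuous differentiability. Differentiating at $\mu_0$ and using $\chi_0'(f_0) = \id$ leaves
\[
(\Phi^\nu)'(\mu_0) = (N_\nu'(f_0))^{-1},
\]
which is precisely the solution operator of the linearized Monge--Amp\`ere equation, as claimed.

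The hard part will be the solvability analysis of the divergence-form oblique boundary value problem in the correct zero-mean H\"older spaces: identifying the compatibility condition as $\int_B g = 0$, checking coercivity on $H^1(B)/\R$, and invoking Schauder theory in a form valid up to the boundary for an oblique (not strictly co-normal) direction. Since this analysis mirrors the PDE argument already carried out inside the proof of Lemma~\ref{lem:M_loc_manif}, I would refer back to that step rather than repeat it.
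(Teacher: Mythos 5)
Your proposal follows essentially the same route as the paper's proof: identify $\Phi^\nu$ locally with $\chi_0 \circ N_\nu^{-1}$ using the chart from Lemma~\ref{lem:M_loc_manif}, compute $N_\nu'(f_0) = \div(A_\nu \nabla \cdot)$ via Proposition~\ref{thm:lin_MA} together with $\chi_0'(f_0) = \id$, prove invertibility by solving the linearized oblique boundary value problem (ellipticity from strong convexity, obliqueness from Lemma~\ref{lem:gradient_oblique}, regularity from the same elliptic theory already used in Lemma~\ref{lem:M_loc_manif}), and then conclude with the inverse function theorem. The extra explicitness you offer on the PDE side (compatibility condition $\int_B g = 0$, Lax--Milgram on $H^1/\R$, the Lieberman/Gilbarg--Trudinger combination) is consistent with, and only slightly more detailed than, what the paper does.
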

\begin{proof}
For $\varphi_0 = \Phi^\nu(\mu) \in \cM$ the derivative of $N_\nu$ at $f_0 = \Pi_{F_0}(\varphi_0)$ is given by Proposition~\ref{thm:lin_MA}. Invertibility of $N'_\nu(f_0)$ is equivalent to finding, for every $f \in C_\diamond^{0,\alpha}(\bar B)$, a unique 
\[
h \in F_0 = \left\{u \in C_\diamond^{2,\alpha}(\bar B): \nabla \varphi_0 \cdot \nabla u  = 0 \mathrm{~on~} \partial B \right\},
\]
such that
\begin{equation}\label{eq_linMA}
\begin{aligned}
\div(A_\nu \nabla h) &= f \, \mathrm{~in~} \, B,
\end{aligned}
\end{equation}
in the weak sense where we have used that $\chi_0'(f_0) = \id$ by Lemma \ref{lem:M_loc_manif}.
As before, by strong convexity of $\varphi_0$ equation \eqref{eq_linMA} is uniformly elliptic and the boundary conditions are compatible, so that by elliptic regularity theory there is a unique solution $h \in C_\diamond^{2,\alpha}(\bar B)$, satisfying the boundary condition $\nabla \varphi_0 \cdot \nabla h = 0$ on $\partial B$. 
With the inverse function theorem we conclude that there is an open (in $\mathcal{S}$) neighborhood $U$ containing $\mu$ such that $N_\nu^{-1}|_U\colon U \rightarrow N_\nu^{-1}(U)$ is a $C^1$-diffeomorphism.
By possibly further restricting $U$ (such that $N_\nu^{-1}(U) \subset V$ from Lemma \ref{lem:M_loc_manif}), we see that $\Phi^\nu|_U = \chi_0 \circ N_\nu^{-1}|_U $ is also $C^1$ in a neighborhood of $\mu$. We employ again $\chi_0'(f_0) = \id$ to conclude.

\end{proof}
\section{Auxiliary probability results}
\label{sec-app_prob}

\begin{lemma}\label{lem:function_LLN}
	Consider space $C_b(\X)$ of bounded continuous functions on a separable metric space $\X$ endowed with the topology of pointwise convergence. 
	Let $f_1, f_2, \dots$ be i.i.d.\ (Borel) random functions from $C_b(\X)$ s.t.\ $f_1(x^*) = 0$ a.s.\ and $\E \sup_{x \in \X} \abs{f_1(x)} < \infty$. 
	Let $\{X_n\}_{n \in \N}$ be a sequence of r.v.\ convergent to $x^*$ a.s. Then
	\[
	\frac{1}{n} \sum_{i = 1}^n f_i(X_n) \to 0 \quad \text{a.s.}
	\] 
\end{lemma}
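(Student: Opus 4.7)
The plan is to exploit three ingredients: the bound $f_i(x^*)=0$ a.s., a modulus-of-continuity style control, and the ordinary scalar strong law of large numbers applied to i.i.d.\ envelopes. Since $\X$ is separable and the $f_i$ are pointwise-measurable on $C_b(\X)$, for every $\delta>0$ the random variable
\[
\omega_i(\delta) \eqset \sup_{x \in D_\delta} \abs{f_i(x)},
\]
where $D_\delta$ is a countable dense subset of the closed ball $\bar B_\delta(x^*)$, is well-defined, nonnegative, and equals $\sup_{d(x,x^*)\le\delta} \abs{f_i(x) - f_i(x^*)}$ a.s.\ thanks to continuity of $f_i$ and the assumption $f_i(x^*)=0$. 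In particular, $\omega_i(\delta) \le \sup_{x \in \X} \abs{f_i(x)}$, so the $\omega_i(\delta)$ are i.i.d.\ with finite expectation.

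The first step is a deterministic continuity step: for each $i$, $\omega_i(\delta)\downarrow 0$ as $\delta\to 0^+$ a.s., so by dominated convergence $\E\omega_1(\delta) \to 0$ as $\delta \to 0^+$. The second step is the SLLN applied, for each fixed $\delta$, to the i.i.d.\ sequence $\omega_i(\delta)$:
\[
\frac{1}{n}\sum_{i=1}^n \omega_i(\delta) \xrightarrow{a.s.} \E\omega_1(\delta).
\]
Taking a countable sequence $\delta_k \downarrow 0$ and intersecting the full-measure events, I obtain a single full-measure event on which this convergence holds simultaneously for every $\delta_k$, and on which $X_n \to x^*$.

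On this event, fix $\delta=\delta_k$. For all $n$ large enough, $d(X_n, x^*)\le \delta$, hence $\abs{f_i(X_n)}\le \omega_i(\delta)$ for every $i$, which yields
\[
\Bigl|\frac{1}{n}\sum_{i=1}^n f_i(X_n)\Bigr|
\le \frac{1}{n}\sum_{i=1}^n \omega_i(\delta).
\]
Passing to $\limsup$ in $n$ and using the SLLN bound above gives
\[
\limsup_{n\to\infty} \Bigl|\frac{1}{n}\sum_{i=1}^n f_i(X_n)\Bigr| \le \E\omega_1(\delta_k).
\]
Letting $k\to\infty$ concludes the proof.

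The only subtle point I expect is the measurability and a.s.\ identification of the envelope $\omega_i(\delta)$, which I will handle using the separability of $\X$ to replace the sup over a ball by a sup over a countable dense subset; everything else then reduces to the scalar SLLN plus dominated convergence.
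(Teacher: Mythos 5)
Your proposal is correct and follows essentially the same route as the paper: define a modulus-of-continuity envelope $\omega_i(\delta)$ at $x^*$, establish its measurability via a countable dense subset of $\X$, and combine the scalar strong law of large numbers with dominated convergence in $\delta$. The only cosmetic difference is that the paper decomposes the sum using the indicator $\ind[d(X_n,x^*)\le\delta]$ and its complement (controlling the tail by the global envelope $\sup_{x\in\X}\abs{f_i(x)}$), whereas you work directly on the a.s.\ event that $d(X_n,x^*)\le\delta$ for all large $n$; the two bookkeeping choices are equivalent.
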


\begin{proof}
	Consider the modulus of continuity for $f$ at point $x^*$:
	\[
	\omega_f(\delta, x^*) \eqset \begin{cases}
		\sup_{x \in B_\delta(x^*)} \abs{f(x) - f(x^*)}, & \delta > 0,\\
		0, & \delta = 0.
	\end{cases}
	\]
	Note that $(f, \delta) \mapsto \omega_f(\delta, x^*)$ is measurable: indeed, take a countable dense set $S \subset \X$, then
	\[
	\omega_f(\delta, x^*) = \sup_{x \in S} \abs{f(x) - f(x^*)} \ind[d(x, x^*) < \delta].
	\]
	Since $f_i(x^*) = 0$ a.s., we have for any fixed $\delta > 0$
	\begin{align*}
		\abs*{\frac{1}{n} \sum_{i = 1}^n f_i(X_n)}
		& \le \frac{1}{n} \sum_{i = 1}^n \omega_{f_i}(d(X_n, x^*), x^*) \\
		& \begin{aligned} \le \frac{1}{n} \sum_{i = 1}^n \bigl(&\omega_{f_i}(\delta, x^*) \ind\left[d(X_n, x^*) \le \delta\right] \\
		& + \sup_{x \in \X} \abs{f_i(X_n)} \ind\left[d(X_n, x^*) > \delta\right]\bigr).
		\end{aligned}
	\end{align*}
	Further, $\E \sup_{x \in \X} \abs{f_1(x)} < \infty$, therefore by the strong LLN 
	\begin{align*}
		\frac{1}{n} \sum_{i = 1}^n \sup_{x \in \X} \abs{f_i(X_n)} &\xrightarrow{a.s.} \E \sup_{x \in \X} \abs{f_1(x)}, \\
		\frac{1}{n} \sum_{i = 1}^n \omega_{f_i}(\delta, x^*) &\xrightarrow{a.s.} \E \omega_{f_1}(\delta, x^*) \le \E \sup_{x \in \X} \abs{f_1(x)}.
	\end{align*}
	Since $\ind\left[d(X_n, x^*) > \delta\right] \to 0$ a.s.\ it holds a.s.\ that
	\[
	\limsup \frac{1}{n} \sum_{i = 1}^n \omega_{f_i}(d(X_n, x^*), x^*)
	\le \E \omega_{f_1}(\delta, x^*) \to 0 \quad \text{as} \quad \delta \to 0
	\]
	due to Lebesgue's dominated convergence theorem. The claim follows.
\end{proof}

The following result is a version of Slutsky's theorem for Hilbert space.
We say that $X_n \in H$ converge in probability to $X$ ($X_n \xrightarrow{P} X$), if $\norm{X_n - X} \xrightarrow{P} 0$, i.e.\ for any $\eps > 0$ it holds that $P\left\{\norm{X_n - X} > \eps\right\} \to 0$.

\begin{lemma}\label{lem:Slutsky}
	Let $\{A_n\}_{n \in \N}$ be a sequence of random bounded operators on a separable Hilbert space $H$ convergent to a fixed operator $A$ in SOT a.s.\ and bounded in probability (i.e.\ for any $\eps > 0$ there exists $M_\eps$ s.t.\ $P\left(\norm{A_n} > M_\eps\right) \le \eps$ for all $n$).
	Let $\{X_n\}$ be a sequence of r.v.\ in $H$, $X_n \xrightarrow{d} X$.
	Then $A_n X_n \xrightarrow{d} A X$.
\end{lemma}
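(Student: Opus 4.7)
The plan is to follow the classical Slutsky strategy via the decomposition
\[
A_n X_n = A X_n + (A_n - A) X_n,
\]
showing that the first term converges in distribution to $AX$ and that the second converges to $0$ in probability; the standard fact that adding a sequence converging to $0$ in probability does not alter a weak limit then yields $A_n X_n \xrightarrow{d} AX$.

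Since $A$ is a fixed bounded (hence continuous) linear operator on $H$ and $X_n \xrightarrow{d} X$, the continuous mapping theorem immediately gives $A X_n \xrightarrow{d} A X$. The heart of the proof is therefore to establish $(A_n - A) X_n \xrightarrow{P} 0$. Fix $\eps, \eta > 0$. As $H$ is Polish and $X_n \xrightarrow{d} X$, Prokhorov's theorem furnishes a compact set $K \subset H$ with $P(X_n \notin K) \le \eta$ for every $n$. By the boundedness-in-probability hypothesis, one may also pick $M \ge \norm{A}$ with $P(\norm{A_n} > M) \le \eta$ for every $n$. On the event $E_n \eqset \{X_n \in K\} \cap \{\norm{A_n} \le M\}$, which has probability at least $1 - 2\eta$, one has the bound $\norm{(A_n - A) X_n} \le \sup_{x \in K} \norm{(A_n - A) x}$, reducing matters to uniform-on-compacts convergence:
\[
\sup_{x \in K} \norm{(A_n - A) x} \longrightarrow 0 \quad \text{a.s.}
\]

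The main obstacle is precisely this uniform convergence on the compact $K$, since SOT convergence alone need not upgrade to norm convergence. The argument is the standard $\delta$-net trick: by the Banach--Steinhaus theorem, a.s.\ SOT convergence already forces $\sup_n \norm{A_n} < \infty$ a.s. Cover $K$ by finitely many balls of radius $\delta$ centered at $x_1, \dots, x_N$; then a.s., for $n$ large enough, $\norm{(A_n - A) x_i} \le \delta$ for each $i$, and for any $x \in K$ with $\norm{x - x_i} \le \delta$ one obtains
\[
\norm{(A_n - A) x} \le \bigl(\textstyle\sup_n \norm{A_n} + \norm{A}\bigr) \delta + \delta,
\]
which tends to $0$ when $\delta$ is first taken small and then $n \to \infty$. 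Consequently $\sup_{x \in K}\norm{(A_n-A)x} \to 0$ in probability as well, so for $n$ sufficiently large it is less than $\eps$ outside an event of probability $\eta$; combining the three bad events yields $P(\norm{(A_n - A) X_n} > \eps) \le 3\eta$, proving $(A_n - A) X_n \xrightarrow{P} 0$ and closing the argument.
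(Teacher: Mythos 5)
Your proof is correct, and it takes a genuinely different route from the paper's. The paper decomposes
\[
A_n X_n = A X_n + (A_n - A)\Pi_k X_n + (A_n - A)(I - \Pi_k) X_n,
\]
where $\Pi_k$ is the orthogonal projection onto the first $k$ elements of an orthonormal basis, and handles the two error terms by letting first $n \to \infty$ (SOT convergence on a finite-dimensional range upgrades to operator-norm convergence) and then $k \to \infty$ (using tightness of $X$ to kill the tail $\left(I-\Pi_k\right)X$). You instead decompose as $A_n X_n = A X_n + (A_n - A) X_n$ and establish $(A_n - A) X_n \xrightarrow{P} 0$ by exploiting tightness of $\{X_n\}$ via Prokhorov to reduce to uniform convergence of $A_n \to A$ on a compact set, which you get from a $\delta$-net argument plus Banach--Steinhaus. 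The paper's finite-rank-projection argument is tied to the Hilbert (or Schauder-basis) structure, whereas your compactness argument works verbatim in any separable Banach space. A small observation: since a.s.\ SOT convergence already yields $\sup_n \norm{A_n} < \infty$ a.s.\ by Banach--Steinhaus, and since the pointwise bound $\norm{(A_n - A)X_n} \le \sup_{x \in K}\norm{(A_n - A)x}$ on $\{X_n \in K\}$ does not use $\norm{A_n} \le M$, the separate boundedness-in-probability hypothesis is not actually needed in your argument; but invoking it does no harm.
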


\begin{proof}
	Let $\{e_n\}_{n \in \N}$ be an o.n.b.\ in $H$ and $\Pi_k$ be the orthogonal projector onto the first $k$ axes $e_1, \dots, e_k$.
	Then
	\begin{equation}\label{eq:decomposition}
		A_n X_n = A X_n + (A_n - A) \Pi_k X_n + (A_n - A) \left(I - \Pi_k\right) X_n.
	\end{equation}
	Since $A_n \xrightarrow{\mathrm{SOT}} A$ a.s., for any fixed $k$ we have $\norm*{\left(A_n - A\right) \Pi_k}_{op} \to 0$ a.s., thus 
	\[
	\left(A_n - A\right) \Pi_k X_n \xrightarrow{P} 0.
	\]
	Moreover,
	\[
	\left(I - \Pi_k\right) X_n 
	\xrightarrow[n \to \infty]{d} \left(I - \Pi_k\right) X 
	\xrightarrow[k \to \infty]{P} 0.
	\]
	Since $A_n$ are bounded in probability, the above equations imply that
	\[
	(A_n - A) X_n \xrightarrow{P} 0.
	\]
	This together with~\eqref{eq:decomposition} and $X_n \xrightarrow{d} X$ yields convergence $A_n X_n \xrightarrow{d} A X$.
\end{proof}

\thanks{\textbf{Acknowledgments:} GC is grateful to the Agence Nationale de la Recherche for its support through the projects   MAGA (ANR-16-CE40-0014) and MFG (ANR-16-CE40-0015-01). KE acknowledges that this project has received funding from the European Union's Horizon 2020 research and innovation programme under the Marie Sk\l odowska-Curie grant agreement No 754362.  
The work of Alexey Kroshnin is supported by Russian Science Foundation grant No. 18-71-10108. The authors wish to thank Filippo Santambrogio who suggested an improvement in the statement of proposition \ref{logconcaveprop} and an anonymous referee who drew our attention to the discrete case.

%\newline
\includegraphics[width=1.5cm]{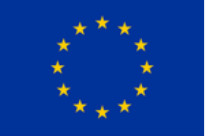}}

\bibliographystyle{plain}

\bibliography{bibli}

\end{document}